\numberwithin{equation}{section}
\theoremstyle{plain}
\newtheorem{theorem}{Theorem}
\newtheorem{corollary}[theorem]{Corollary}
\newtheorem{lemma}[theorem]{Lemma}
\newtheorem{proposition}[theorem]{Proposition}
\newtheorem{remark}[theorem]{Remark}
\begin{document}

\title[Equality of averaged and quenched large deviations for RWRE]{Equality of averaged and quenched large deviations \\for random walks in random environments \\in dimensions four and higher}
\author{Atilla Yilmaz}
\address{Faculty of Mathematics\\
Weizmann Institute of Science}
\curraddr{Department of Mathematics\\
University of California, Berkeley}
\email{atilla@math.berkeley.edu}
\date{February 25, 2009. Revised on December 30, 2009}
\subjclass[2000]{60K37, 60F10, 82C41.}%Processes in random environments, large deviations, dynamics of random walks.
\keywords{Large deviations, random walk, random environment, disordered media, renewal theorem.}

%\thanks{This research was supported partially by a grant from the National Science Foundation: DMS-06-04380.}

\begin{abstract}

We consider large deviations for nearest-neighbor random walk in a uniformly elliptic i.i.d.\ environment. It is easy to see that the quenched and the averaged rate functions are not identically equal. When the dimension is at least four and Sznitman's transience condition (T) is satisfied, we prove that these rate functions are finite and equal on a closed set whose interior contains every nonzero velocity at which the rate functions vanish.

\end{abstract}

\maketitle

\section{Introduction}

\subsection{The model}

Let $U:=\{\pm e_i\}_{i=1}^d$ where $(e_i)_{i=1}^d$ denotes the canonical basis for the $d$-dimensional integer lattice $\mathbb{Z}^d$ with $d\geq1$. Consider a discrete time Markov chain on $\mathbb{Z}^d$ with nearest-neighbor steps, i.e., with steps in $U$. For every $x\in\mathbb{Z}^d$ and $z\in U$, the transition probability from $x$ to $x+z$ is denoted by $\pi(x,x+z)$, and the transition vector $\omega_x:=(\pi(x,x+z))_{z\in U}$ is referred to as the \textit{environment} at $x$. If the environment $\omega:=(\omega_x)_{x\in\mathbb{Z}^d}$ is sampled from a probability space $(\Omega,\mathcal{B},\mathbb{P})$, then this process is called \textit{random walk in a random environment} (RWRE). Here, $\mathcal{B}$ is the Borel $\sigma$-algebra corresponding to the product topology.

For every $y\in\mathbb{Z}^d$, define the shift $T_y$ on $\Omega$ by $\left(T_y\omega\right)_x:=\omega_{x+y}$. Assume that $\mathbb{P}$ is stationary and ergodic under $\left(T_z\right)_{z\in U}$ and 
\begin{equation}\label{ellipticity}
\mbox{there exists a $\delta>0$ such that $\mathbb{P}\{\pi(0,z)\geq\delta\}=1$ for every $z\in U$. (\textit{Uniform ellipticity}.)}
\end{equation}

For every $x\in\mathbb{Z}^d$ and $\omega\in\Omega$, the Markov chain with environment $\omega$ induces a probability measure $P_x^\omega$ on the space of paths starting at $x$. Statements about $P_x^\omega$ that hold for $\mathbb{P}$-a.e.\ $\omega$ are referred to as \textit{quenched}. Statements about the semi-direct product $P_x:=\mathbb{P}\times P_x^\omega$ are referred to as \textit{averaged} (or \textit{annealed}). Expectations under $\mathbb{P}, P_x^\omega$ and $P_x$ are denoted by $\mathbb{E}, E_x^\omega$ and $E_x$, respectively.

See \cite{Zeitouni} for a survey of results on RWRE.

\subsection{Regeneration times}

Let $\left(X_n\right)_{n\geq0}$ denote the path of a particle taking a RWRE. Consider a unit vector $\hat{u}\in\mathcal{S}^{d-1}$.
Define a sequence $\left(\tau_m\right)_{m\geq0}=\left(\tau_m(\hat{u})\right)_{m\geq0}$ of random times, which are referred to as \textit{regeneration times} (relative to $\hat{u}$), by $\tau_o:=0$ and
\begin{align}
%\tau_1&:=\inf\left\{j>0:\langle X_i,\hat{u}\rangle<\langle X_j,\hat{u}\rangle\leq\langle X_k,\hat{u}\rangle\mbox{ for all }i,k\mbox{ with }i<j<k\right\}\quad\mbox{and}\label{regenerationtimes_1}\\
\tau_{m}&:=\inf\left\{j>\tau_{m-1}:\langle X_i,\hat{u}\rangle<\langle X_j,\hat{u}\rangle\leq\langle X_k,\hat{u}\rangle\mbox{ for all }i,k\mbox{ with }i<j<k\right\}\label{regenerationtimes_m}
\end{align}
for every $m\geq1$.
If the walk is directionally transient relative to $\hat{u}$, i.e., if
\begin{equation}\label{transience}
P_o\left(\lim_{n\to\infty}\langle X_n,\hat{u}\rangle=\infty\right)=1,
\end{equation}
then $P_o\left(\tau_m<\infty\right)=1$ for every $m\geq1$.
As noted in \cite{SznitmanZerner}, the significance of $\left(\tau_m\right)_{m\geq1}$ is due to the fact that $$\left(X_{\tau_m+1}-X_{\tau_m},X_{\tau_m+2}-X_{\tau_m},\ldots,X_{\tau_{m+1}}-X_{\tau_m}\right)_{m\geq1}$$ is an i.i.d.\ sequence under $P_o$ when
\begin{equation}\label{i.i.d.}
\omega=(\omega_x)_{x\in\mathbb{Z}^d}\mbox{ is an i.i.d.\ collection}.
\end{equation}

The walk is said to satisfy Sznitman's transience condition (\textbf{T},$\hat{u}$) if (\ref{transience}) holds and
\begin{equation}\label{moment}
E_o\left[\sup_{1\leq i\leq\tau_1(\hat{u})}\exp\left\{\kappa_1\left|X_i\right|\right\}\right]<\infty\mbox{ for some }\kappa_1>0.
\end{equation}
When $d\geq2$, Sznitman \cite{SznitmanT} proves that (\ref{ellipticity}), (\ref{i.i.d.}) and (\textbf{T},$\hat{u}$) imply a \textit{ballistic} law of large numbers (LLN), an averaged central limit theorem and certain large deviation estimates. Denote the LLN velocity by $\xi_o\neq0$.

As stated below in Lemma \ref{madaiki}, (\textbf{T},$\hat{u}$) is satisfied as soon as the walk is \textit{non-nestling} relative to $\hat{u}$, i.e., when
\begin{equation}\label{nonmumu}
\mathrm{ess}\inf_{\mathbb{P}}\sum_{z\in U}\pi(0,z)\langle z,\hat{u}\rangle>0.
\end{equation}
The walk is said to be non-nestling if it is non-nestling relative to some unit vector. Otherwise, it is referred to as \textit{nestling}. In the latter case, the convex hull of the support of the law of $\sum_{z}\pi(0,z)z$ contains the origin.

\subsection{Previous results on large deviations for RWRE}

Recall that a sequence $\left(Q_n\right)_{n\geq1}$ of probability measures on a topological space $\mathbb{X}$ satisfies the \textit{large deviation principle} (LDP) with rate function $I:\mathbb{X}\to\mathbb{R}$ if $I$ is non-negative, lower semicontinuous, and for any measurable set $G$, $$-\inf_{x\in G^o}I(x)\leq\liminf_{n\to\infty}\frac{1}{n}\log Q_n(G)\leq\limsup_{n\to\infty}\frac{1}{n}\log Q_n(G)\leq-\inf_{x\in\bar{G}}I(x).$$ Here, $G^o$ is the interior of $G$, and $\bar{G}$ its closure. See \cite{DemboZeitouni} for general background regarding large deviations.

\begin{theorem}[Quenched LDP]\label{qLDPgeneric}
For $\mathbb{P}$-a.e.\ $\omega$, $\left(P_o^\omega\left(\frac{X_n}{n}\in\cdot\,\right)\right)_{n\geq1}$ satisfies the LDP with a deterministic and convex rate function $I_q$.
\end{theorem}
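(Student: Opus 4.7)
The plan is to construct the rate function $I_q$ via the classical Zerner--Varadhan subadditive approach: build a deterministic convex quenched Lyapunov exponent for tilted hitting probabilities using Kingman's subadditive ergodic theorem, then extract the LDP from it.

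For $\lambda \in \mathbb{R}^d$, $\mu \geq 0$, and $x \in \mathbb{Z}^d$, set $H_x := \inf\{n \geq 0 : X_n = x\}$ and define the tilted hitting functional
\[
\alpha(\omega, x, \lambda, \mu) \;:=\; -\log E_o^\omega\bigl[\exp\{\lambda \cdot x - \mu H_x\}\,\one_{H_x<\infty}\bigr].
\]
Applying the strong Markov property at the first visit to an intermediate site $y$ gives the subadditive inequality
\[
\alpha(\omega, x + y, \lambda, \mu) \;\leq\; \alpha(\omega, y, \lambda, \mu) + \alpha(T_y\omega, x, \lambda, \mu),
\]
and uniform ellipticity (\ref{ellipticity}) supplies the bound $\alpha(\omega, n\ell, \lambda, \mu) \leq Cn$ required by Kingman's theorem. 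Since $(T_z)_{z\in U}$ generates the $\mathbb{Z}^d$-action under which $\mathbb{P}$ is stationary-ergodic, Kingman yields a deterministic limit
\[
\bar\alpha(\ell, \lambda, \mu) \;:=\; \lim_{n\to\infty} \tfrac{1}{n}\alpha(\omega, n\ell, \lambda, \mu)
\]
for every primitive direction $\ell \in \mathbb{Z}^d$. Subadditivity propagates to $\bar\alpha(\cdot, \lambda, \mu)$, which then extends by homogeneity and convexity to a continuous convex function on $\mathbb{R}^d$.

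The LDP for $X_n/n$ is extracted from $\bar\alpha$ by Legendre duality in the $(\lambda, \mu)$ variables. The upper bound is exponential Chebyshev applied with the optimal tilt. The lower bound is built by constructing, for each $v$ in the effective domain of $I_q$, quenched paths pinned through a prescribed sequence of intermediate sites, whose total cost is controlled by $\bar\alpha$; this is where uniform ellipticity is essential, since it keeps local surgery cheap. The resulting $I_q$ is automatically convex and lower semicontinuous as a Legendre transform, and deterministic because $\bar\alpha$ is.

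The main obstacle is the lower bound. Kingman's theorem is naturally adapted to the event that the walk \emph{eventually} hits a distant site, whereas the LDP concerns the walker's position at the \emph{prescribed} time $n$. Reconciling the two requires careful bookkeeping of the time cost via the Laplace parameter $\mu$ and a uniformly-elliptic pinning construction that produces quenched trajectories realizing every $v$ in the effective domain of $I_q$ at the correct exponential rate.
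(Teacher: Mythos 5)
The paper does not actually prove Theorem \ref{qLDPgeneric}; it quotes it from the literature (Zerner \cite{Zerner} for nestling i.i.d.\ environments, Varadhan \cite{Raghu} and Rosenbluth \cite{jeffrey} in general), so your proposal must be measured against those proofs. What you sketch is recognizably Zerner's route: hitting-time Lyapunov exponents via Kingman, then Legendre duality in the Laplace parameter $\mu$. Two remarks before the main point. First, the spatial tilt $\lambda\cdot x$ in your functional is vacuous: on $\{H_x<\infty\}$ the endpoint is the deterministic point $x$, so $\mathrm{e}^{\lambda\cdot x}$ factors out of the expectation and $\lambda$ contributes only a linear function of $x$; the meaningful object is just $-\log E_o^\omega[\mathrm{e}^{-\mu H_x},H_x<\infty]$. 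Second, Kingman along a single direction $\ell$ yields a limit that is a priori only $T_\ell$-invariant, not deterministic, under the stated hypothesis that $\mathbb{P}$ is ergodic under the full group generated by $U$; this is repairable (the limit changes by $O(1)$ under any bounded shift of the base point) but needs to be said.

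The genuine gap is the one you yourself flag and then do not close: converting hitting-time asymptotics into a fixed-time LDP. With $\mu\geq 0$ the Laplace transform of $H_x$ only penalizes long crossing times, so the dual $\sup_{\mu\geq0}\{\bar\alpha(v,\mu)-\mu\}$ can only capture strategies in which the walk reaches $nv$ at least as fast as is typical; realizing a velocity \emph{slower} than the LLN velocity would require $\mu<0$, where $E_o^\omega[\mathrm{e}^{-\mu H_x}]$ may diverge and the whole subadditive setup breaks down. Your fallback — ``uniformly elliptic pinning keeps local surgery cheap'' — only shows the wasted time costs at most $-\log\delta$ per step; it does not show the lower bound matches the Legendre-dual candidate $I_q$. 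This is precisely why Zerner had to assume the walk is nestling, and why the general statement (which this paper needs, e.g.\ for non-nestling walks in Theorem \ref{QequalsA}(a)) required Varadhan's different argument: apply the subadditive ergodic theorem directly to pinned point-to-point quantities of the form $-\log P_o^\omega(X_n=x_n)$ along space--time rays, where concatenation via the Markov property gives superadditivity and the time constraint is built in from the start, so no hitting-time-to-fixed-time conversion is needed. As written, your plan proves the theorem only in the nestling i.i.d.\ case.
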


When $d=1$, Greven and den Hollander \cite{GdH} prove Theorem \ref{qLDPgeneric} for walks in i.i.d.\ environments. They provide a formula for $I_q$ and show that its graph typically has flat pieces. Comets, Gantert and Zeitouni \cite{CGZ} generalize the results in \cite{GdH} to stationary and ergodic environments.

When $d\geq1$, Zerner \cite{Zerner} proves Theorem \ref{qLDPgeneric} for nestling walks in i.i.d.\ environments. Varadhan \cite{Raghu} drops the nestling assumption and generalizes Theorem \ref{qLDPgeneric} to stationary and ergodic environments. Since both of these works rely on the subadditive ergodic theorem, they do not lead to any formulae for the rate function. Rosenbluth \cite{jeffrey} gives an alternative proof of Theorem \ref{qLDPgeneric} in the case of stationary and ergodic environments. He provides a variational formula for the rate function $I_q$. In \cite{YilmazQuenched}, we prove a quenched LDP for the pair empirical measure of the so-called \textit{environment Markov chain} $\left(T_{X_n}\omega\right)_{n\geq0}$. This implies Rosenbluth's result by an appropriate contraction.

In their aforementioned paper concerning RWRE on $\mathbb{Z}$, Comets et al.\ \cite{CGZ} prove also

\begin{theorem}[Averaged LDP]\label{aLDPgeneric}
$\left(P_o\left(\frac{X_n}{n}\in\cdot\,\right)\right)_{n\geq1}$ satisfies the LDP with a convex rate function $I_a$.
\end{theorem}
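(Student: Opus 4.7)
The plan is to reduce the LDP for $X_n/n$ to one for hitting times. Define $T_k^+ := \inf\{n \geq 0 : X_n = k\}$ for $k \geq 1$ and $T_k^-$ analogously on the left. Since the walk is nearest-neighbor on $\mathbb{Z}$, the identity $\{X_n \geq k\} = \{T_k^+ \leq n\}$ holds exactly for $k \geq 1$ (and symmetrically for negative levels), so an LDP for $T_k^{\pm}/k$ transfers directly to one for $X_n/n$ via the substitution $v \leftrightarrow 1/t$.

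For the hitting-time LDP I would work with the logarithmic moment generating function $\Lambda^+(\lambda) := \lim_k \frac{1}{k} \log E_o[e^{-\lambda T_k^+} \mathbf{1}_{T_k^+ < \infty}]$ for $\lambda \geq 0$. The strong Markov property at $T_m^+$ together with the shift invariance of $\mathbb{P}$ yields
$$\log E_o[e^{-\lambda T_{m+k}^+} \mathbf{1}_{T_{m+k}^+ < \infty}] \geq \log E_o[e^{-\lambda T_m^+}\mathbf{1}_{T_m^+ < \infty}] + \log E_o[e^{-\lambda T_k^+}\mathbf{1}_{T_k^+ < \infty}],$$
so Fekete's lemma produces the limit $\Lambda^+(\lambda)$. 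This inequality is an equality in the i.i.d.\ case; in the merely stationary ergodic setting one first applies Kingman's subadditive ergodic theorem to the quenched analogue $\log E_o^\omega[e^{-\lambda T_k^+}\mathbf{1}_{T_k^+ < \infty}]$ and then averages. The function $\Lambda^+$ is convex, its Legendre transform $J^+$ is a convex rate function for $T_k^+/k$, and a G\"artner--Ellis argument supplemented by a tilting construction for the matching lower bound produces the LDP. A symmetric treatment handles $T_k^-/k$.

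Transferring back, set $I_a(v) := v J^+(1/v)$ for $v \in (0, 1]$ and $I_a(v) := |v| J^-(1/|v|)$ for $v \in [-1, 0)$; outside $[-1, 1]$ the rate function is $+\infty$ by the nearest-neighbor constraint. Each piece is convex because $v \mapsto v J^{\pm}(1/v)$ is the perspective transform of the convex $J^{\pm}$, and $I_a(0)$ is determined by continuity between the two sides.

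The main obstacle is the behavior at $v = 0$ in the nestling regime, where $T_k^{\pm}$ may be infinite with positive $P_o$-probability and the walk can spend a linear amount of time making negligible progress in either direction. Producing the matching lower bound there requires a change of measure that tilts the environment toward vanishing local drift; uniform ellipticity (\ref{ellipticity}) is what keeps the relative entropy cost finite and ensures that $\Lambda^{\pm}$ are finite in a neighborhood of the origin. A secondary technical point is verifying that the two pieces $v J^+(1/v)$ and $|v| J^-(1/|v|)$ glue continuously at zero so that $I_a$ is a single convex function on $[-1, 1]$.
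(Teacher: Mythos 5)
The decisive problem is dimension. Theorem \ref{aLDPgeneric} as stated (and as used in this paper) concerns the general model on $\mathbb{Z}^d$; the main result here requires it for $d\geq4$, and the paper obtains the theorem by citation — to Varadhan \cite{Raghu} for i.i.d.\ environments in every dimension, and to \cite{CGZ} for the one-dimensional stationary ergodic case. Your entire construction — the identity $\{X_n\geq k\}=\{T_k^+\leq n\}$, the level-by-level superadditivity, the perspective transform $v\mapsto vJ^+(1/v)$ — is intrinsically one-dimensional: for $d\geq2$ the walk is not forced through successive sites, there is no scalar hitting-time parametrization of the $d$-dimensional displacement, and the reduction collapses. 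So the proposal at best addresses $d=1$, which is not the case the paper needs; Varadhan's general-$d$ argument proceeds by entirely different means (a variational analysis of the process of the walk's history), not by hitting times.

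Even restricted to $d=1$ there are two genuine gaps. First, the superadditivity of $\log E_o\bigl[e^{-\lambda T_{m+k}^+}\mathbf{1}_{T_{m+k}^+<\infty}\bigr]$ under the \emph{averaged} measure does not follow from the strong Markov property alone: after time $T_m^+$ the walk may backtrack below level $m$, so the quenched factors $E_o^\omega\bigl[e^{-\lambda T_m^+}\bigr]$ and $E_m^\omega\bigl[e^{-\lambda T_{m+k}^+}\bigr]$ depend on overlapping portions of the environment and do not decouple when you take $\mathbb{E}$; one needs a correlation inequality or a record-level/regeneration decomposition, which is where \cite{CGZ} invest real effort. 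Second, the G\"artner--Ellis route cannot deliver the full LDP here: as recalled in this very paper, the graph of $I_a$ typically has flat pieces, so $\Lambda_a$ (equivalently your $\Lambda^{\pm}$) fails to be differentiable everywhere, the non-exposed points are exactly the velocities near the zero set where the theorem has content, and the ``tilting construction for the matching lower bound'' you defer is the substance of the proof rather than a technical supplement. As written, the proposal is a plan for the easy part of the one-dimensional case, not a proof of the stated theorem.
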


\noindent They establish this result for a class of environments including the i.i.d.\ case, and obtain the following variational formula for $I_a$:
\begin{equation}\label{montreal}
I_a(\xi)=\inf_{\mathbb{Q}}\left\{I_q^\mathbb{Q}(\xi) + |\xi|h\left(\mathbb{Q}\left|\mathbb{P}\right.\right)\right\}.
\end{equation}Here, the infimum is over all stationary and ergodic probability measures on $\Omega$, $I_q^\mathbb{Q}(\cdot)$ denotes the rate function for the quenched LDP when the environment measure is $\mathbb{Q}$, and $h\left(\cdot\left|\cdot\right.\right)$ is specific relative entropy. Similar to the quenched picture, the graph of $I_a$ is shown to typically have flat pieces.

Varadhan \cite{Raghu} considers walks in i.i.d.\ environments, and proves Theorem \ref{aLDPgeneric} for any $d\geq1$. He gives a variational formula for $I_a$. (His formula does not resemble (\ref{montreal}) in any way.) Rassoul-Agha \cite{FirasLDP} generalizes Varadhan's result to a class of mixing environments.

Let $\mathcal{N}_q:=\left\{\xi\in\mathbb{R}^d:I_q(\xi)=0\right\}$ and $\mathcal{N}_a:=\left\{\xi\in\mathbb{R}^d:I_a(\xi)=0\right\}$ denote the zero-sets of $I_q$ and $I_a$. The following theorem summarizes the previous results regarding the qualitative properties of the quenched and the averaged rate functions when $d\geq2$.
\begin{theorem}\label{previousqual}
Assume $d\geq2$, (\ref{ellipticity}) and (\ref{i.i.d.}).
\begin{itemize}
\item[(a)] $I_q$ and $I_a$ are convex, $I_q(0)=I_a(0)$ and $\mathcal{N}_q=\mathcal{N}_a$, cf.\ \cite{Raghu}.
\item[(b)] If the walk is non-nestling, then 
\begin{itemize}
\item[(i)] $\mathcal{N}_a$ consists of the true velocity $\xi_o$, cf.\ \cite{Raghu}, and
\item[(ii)] $I_a$ is strictly convex and analytic on an open set $\mathcal{A}_a$ containing $\xi_o$, cf.\ \cite{Jon,YilmazAveraged}.
\end{itemize}
\item[(c)] If the walk is nestling, then $\mathcal{N}_a$ is a line segment containing the origin, cf.\ \cite{Raghu}.
\item[(d)] If (\textbf{T},$\hat{u}$) is satisfied for some $\hat{u}\in\mathcal{S}^{d-1}$ in the latter case, then 
\begin{itemize}
\item[(i)] the origin is an endpoint of $\mathcal{N}_a$, cf.\ \cite{SznitmanT},
\item[(ii)] $I_a$ is strictly convex and analytic on an open set $\mathcal{A}_a^+$, cf.\ \cite{YilmazAveraged},
\item[(iii)] there exists a $(d-1)$-dimensional smooth surface patch $\mathcal{A}_a^b$ such that $\xi_o\in\mathcal{A}_a^b\subset\partial\mathcal{A}_a^+$, cf.\ \cite{YilmazAveraged},
\item[(iv)] the unit vector $\eta_o$ normal to $\mathcal{A}_a^b$ (and pointing in $\mathcal{A}_a^+$) at $\xi_o$ satisfies $\langle\eta_o,\xi_o\rangle>0$, cf.\ \cite{YilmazAveraged}, and
\item[(v)] $I_a(t\xi)=tI_a(\xi)$ for every $\xi\in\mathcal{A}_a^b$ and $t\in[0,1]$, cf.\ \cite{Jon}.
\end{itemize}
\end{itemize}
\end{theorem}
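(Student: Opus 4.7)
Every bullet of Theorem \ref{previousqual} is attributed to an existing paper, so my plan is to recall the \emph{mechanism} responsible for each assertion rather than reprove them from first principles. Convexity of $I_q$ and $I_a$ is already built into Theorems \ref{qLDPgeneric} and \ref{aLDPgeneric}, so the work concentrates on (i) the zero-set descriptions in (a)--(d)(i) and (ii) the regularity statements in (b)(ii) and (d)(ii)--(v).

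For (a), I would start from the standard comparison $I_a\leq I_q$: writing $P_o=\mathbb{E}[P_o^\omega]$ and restricting the expectation to a subset of $\Omega$ where the quenched lower bound holds uniformly (Egorov) yields $\liminf\frac1n\log P_o(X_n/n\in G)\geq -\inf_G I_q$. Boundedness of both rate functions at $0$ follows from \eqref{ellipticity}: the walk can be forced back and forth with quenched probability at least $\delta^n$, so $I_q(0)\leq-\log\delta$ and the same bound propagates to $I_a$. The more delicate identities $I_q(0)=I_a(0)$ and $\mathcal{N}_q=\mathcal{N}_a$ are where Varadhan's variational formula \cite{Raghu} enters: any velocity at which $I_a$ vanishes is realized as the mean displacement under an ergodic measure on the environment-from-the-particle chain whose specific entropy with respect to $\mathbb{P}$ is zero, and such a measure simultaneously certifies that $I_q$ vanishes there.

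For (b)(i), (c) and (d)(i), the shape of $\mathcal{N}_a$ is controlled by how the local drift $d(\omega):=\sum_z\pi(0,z)z$ sits relative to the origin. In the non-nestling case $d$ is contained in a common open half-space, so any ergodic change of measure on the environment still produces positive drift, forcing the LLN velocity $\xi_o$ to be the unique zero of $I_a$. In the nestling case $0$ lies in the convex hull of the support of $d$; tilting the environment on growing boxes produces stationary walks of any speed on the segment from $0$ to $\xi_o$ at subexponential cost, whence (c). Adding (\textbf{T},$\hat{u}$) rules out zero-cost backtracking and pins $0$ as the endpoint of that segment, giving (d)(i).

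For the regularity assertions (b)(ii) and (d)(ii)--(v), I would follow \cite{Jon,YilmazAveraged} and study the logarithmic moment generating function $\Lambda(\theta):=\log E_o[e^{\langle\theta,X_{\tau_1}\rangle}]$ of the first regeneration. In the region where the exponential moment \eqref{moment} extends, $\Lambda$ is real-analytic and strictly convex by an implicit-function argument on the renewal equation governing regenerations, and Legendre duality transfers these properties to $I_a$ on $\mathcal{A}_a$ (resp.\ $\mathcal{A}_a^+$). The boundary patch $\mathcal{A}_a^b$, the normal condition (d)(iv), and the scaling (v) then reflect Peterson's observation \cite{Jon} that for slow velocities $t\xi_o$ the cheapest averaged strategy is to wait a proportion $1-t$ of the time and then run ballistically, producing $I_a(t\xi)=tI_a(\xi)$ along the patch. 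I expect the main obstacle in reassembling the theorem from scratch to be (a): the identity $\mathcal{N}_q=\mathcal{N}_a$ is not a soft Jensen-type comparison and genuinely requires the entropic/variational representations of both rate functions.
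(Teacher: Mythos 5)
The paper offers no proof of Theorem \ref{previousqual}: it is a compilation of results quoted from \cite{Raghu,Jon,YilmazAveraged,SznitmanT}, each item carrying its own citation, so there is no internal argument to compare yours against. Your proposal is in the same spirit -- a survey of the mechanisms behind the cited results -- and as such it is a fair account: the Jensen/Fatou (or Egorov) comparison $I_a\leq I_q$, the role of Varadhan's variational formula in $\mathcal{N}_q=\mathcal{N}_a$, the regeneration-based Legendre duality behind the analyticity statements, and the ``wait then run'' picture for (d)(v) are all the right ingredients. Two of your heuristics would, however, need real work if promoted to proofs. For (b)(i), the observation that every ergodic tilt of the environment keeps the drift in a fixed open half-space only confines $\mathcal{N}_a$ to that half-space; the fact that the zero set is the \emph{single point} $\xi_o$ rests on the exponential moments of $\tau_1$ in the non-nestling case (Lemma \ref{madaiki}), which rule out zero-cost deviations of the speed in the $\hat u$ direction, not merely zero-cost sign changes. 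For (d)(v), the ``wait a fraction $1-t$ of the time'' strategy combined with $I_a(0)=0$ and convexity yields only $I_a(t\xi)\leq tI_a(\xi)$; the reverse inequality is the substantive half, and it is obtained (as in the present paper's proof of Theorem \ref{QequalsA}(b), which the author notes reproves (v)) by testing the Legendre supremum at the boundary point $\overline{\Gamma_a}(\xi)\in\mathcal{C}_a^b$ where $\Lambda_a$ vanishes, giving $I_a(t\xi)\geq t\langle\overline{\Gamma_a}(\xi),\xi\rangle=tI_a(\xi)$. With those two caveats, your summary is consistent with the sources the theorem cites.
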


\subsection{The main result}

Assume (\ref{ellipticity}) and (\ref{i.i.d.}). It is clear that
$$\mathcal{D}:=\left\{(\xi_1,\ldots,\xi_d)\in\mathbb{R}^d:|\xi_1|+\cdots+|\xi_d|\leq1\right\}=\left\{\xi\in\mathbb{R}^d:I_a(\xi)<\infty\right\}=\left\{\xi\in\mathbb{R}^d:I_q(\xi)\leq-\log\delta\right\}.$$
For any $\xi\in\mathbb{R}^d$, $I_a(\xi)\leq I_q(\xi)$ by Jensen's inequality and Fatou's lemma.

\begin{proposition}\label{sevgilihikmet}
If the support of $\mathbb{P}$ is not a singleton, then $I_a<I_q$ at some interior points of $\mathcal{D}$.
\end{proposition}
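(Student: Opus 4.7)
The plan is to locate a boundary point $z^*\in\partial\mathcal D$ at which $I_a(z^*)<I_q(z^*)$ can be checked by an explicit computation, and then transport the strict gap into the interior by radial continuity of the two rate functions.

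Since $\mathrm{supp}\,\mathbb P$ is not a singleton and $\omega$ is i.i.d., the marginal law of $\omega_0=(\pi(0,z))_{z\in U}$ is nondegenerate, hence there exists $z^*\in U$ such that $\pi(0,z^*)$ is not $\mathbb P$-a.s.\ constant. The vertex $z^*\in\partial\mathcal D$ is special because the only $n$-step nearest-neighbor path from $0$ to $nz^*$ is the straight line, so
$$P_o^\omega(X_n=nz^*)=\prod_{i=0}^{n-1}\pi(iz^*,(i+1)z^*)\qquad\text{and}\qquad P_o(X_n=nz^*)=\mathbb E[\pi(0,z^*)]^n,$$
the averaged identity following from independence of environments at distinct sites. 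The LDP upper bound applied to the closed singleton $\{z^*\}$, together with the ergodic theorem for the shift $T_{z^*}$ in the quenched case, immediately gives $I_a(z^*)\le-\log\mathbb E[\pi(0,z^*)]$ and $I_q(z^*)\le-\mathbb E[\log\pi(0,z^*)]$. For the matching lower bounds, apply the LDP upper bound to closed balls $\bar B_\epsilon(z^*)$: any $n$-step nearest-neighbor path with $X_n/n\in\bar B_\epsilon(z^*)$ must make at least $(1-c\epsilon)n$ steps in direction $+z^*$, so the combinatorial count of the at most $c\epsilon n$ detour steps is $e^{o_\epsilon(n)}$, their transition weights are controlled by uniform ellipticity, and the $+z^*$-step factors along sites that are close to but not exactly on $\{iz^*\}$ still average to $\mathbb E[\log\pi(0,z^*)]$ by ergodicity. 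Letting $\epsilon\downarrow 0$ and invoking lower semicontinuity produces the reverse inequalities, so $I_a(z^*)=-\log\mathbb E[\pi(0,z^*)]$ and $I_q(z^*)=-\mathbb E[\log\pi(0,z^*)]$.

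Because $\pi(0,z^*)$ is non-constant, Jensen's inequality is strict, giving $I_a(z^*)<I_q(z^*)$. Both $I_a$ and $I_q$ are convex on $\mathcal D$, finite there (bounded by $-\log\delta$ thanks to uniform ellipticity), and lower semicontinuous as LDP rate functions; a finite convex lower semicontinuous function on a closed interval is automatically continuous, so the restrictions $t\mapsto I_a(tz^*)$ and $t\mapsto I_q(tz^*)$ are continuous on $[0,1]$. Hence $I_a(tz^*)\to I_a(z^*)$ and $I_q(tz^*)\to I_q(z^*)$ as $t\uparrow 1$, and for $t<1$ sufficiently close to $1$ one has $I_a(tz^*)<I_q(tz^*)$ while $tz^*\in\mathrm{int}\,\mathcal D$, which proves the proposition. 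The main technical obstacle is the sharp lower bound on $I_q(z^*)$: ruling out that the quenched probability of near-straight paths beats the straight-line contribution at exponential order is a combinatorial--ergodic estimate, handled by entropy bounds on the deviation budget together with uniform ellipticity and the i.i.d.\ structure of the environment.
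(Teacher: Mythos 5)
Your argument is correct and essentially identical to the paper's: bound $I_a(z^*)$ from above by the straight-path computation, bound $I_q(z^*)$ from below via a path-counting plus ergodic-averaging estimate for near-straight paths, apply strict Jensen, and push the strict inequality into the interior of $\mathcal{D}$ by continuity of the finite convex lower semicontinuous rate functions along the segment $[0,z^*]$. Two minor remarks: converting your quenched probability upper bound into the lower bound $I_q(z^*)\geq-\mathbb{E}[\log\pi(0,z^*)]$ uses the LDP \emph{lower} bound on a set whose interior contains $z^*$ (not the upper bound on a closed ball, which can only produce upper bounds on the rate function and would merely reproduce the inequality $I_q(z^*)\leq-\mathbb{E}[\log\pi(0,z^*)]$ you already have), and the step where the $+z^*$-factors ``still average to $\mathbb{E}[\log\pi(0,z^*)]$ by ergodicity'' really requires a union bound over the $\mathrm{e}^{O(\epsilon n)}$ path-dependent site collections combined with concentration for i.i.d.\ bounded variables, since one is controlling a maximum over exponentially many configurations --- the paper's own explanation of (\ref{gecensenezordu}) is equally terse on this point.
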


\begin{proof}
If the support of $\mathbb{P}$ is not a singleton, then $\mathbb{P}\left\{\pi(0,z)=\mathbb{E}\{\pi(0,z)\}\right\}<1$ for some $z\in U$, and
\begin{equation}\label{sakkino}\mathbb{E}\{\log\pi(0,z)\}<\log\mathbb{E}\{\pi(0,z)\}
\end{equation} by Jensen's inequality. For every $n\geq1$, the event $\{X_n=nz\}$ consists of a single path marching in the $z$-direction. In particular, this path never visits the same point more than once. Therefore,
\begin{equation}\label{kabusgibiydi}
\log\mathbb{E}\{\pi(0,z)\}=\lim_{n\to\infty}\frac{1}{n}\log P_o(X_n=nz)\leq-I_a(z).
\end{equation}
On the other hand, for every $\epsilon>0$,
\begin{equation}\label{gecensenezordu}
-I_q(z)\leq\liminf_{n\to\infty}\frac1n\log P_o^\omega(\langle X_n,z\rangle>n(1-\epsilon))\leq(1-\epsilon)\mathbb{E}\{\log\pi(0,z)\}+O(\epsilon).
\end{equation}
Explanation: For every $n\geq1$, the number of paths constituting the event $\{\langle X_n,z\rangle>n(1-\epsilon)\}$ is $\mathrm{e}^{nO(\epsilon)}$. The probability of each such path is bounded from above by the product of the probabilities of its jumps in the $z$-direction taking place at distinct points. Since there are at least $n(1-\epsilon)$ such jumps, (\ref{gecensenezordu}) follows from Jensen's inequality and the LLN for i.i.d.\ random variables.

Putting (\ref{sakkino}), (\ref{kabusgibiydi}) and (\ref{gecensenezordu}) together, we conclude that $I_a(z)<I_q(z)$. Since the rate functions are convex and lower semicontinuous, they are in fact continuous on $\mathcal{D}$, cf.\ Theorem 10.2 of \cite{Rockafellar}. This implies the desired result.
\end{proof}

The following theorem is the main result of this paper.
\begin{theorem}\label{QequalsA}
Assume $d\geq4$, (\ref{ellipticity}), (\ref{i.i.d.}) and (\textbf{T},$\hat{u}$) for some $\hat{u}\in\mathcal{S}^{d-1}$.
\begin{itemize}
\item[(a)] If the walk is non-nestling, then $I_q=I_a$ on an open set $\mathcal{A}_{eq}$ containing $\xi_o$.
\item[(b)] If the walk is nestling, then
\begin{itemize}
\item[(i)] $I_q=I_a$ on an open set $\mathcal{A}_{eq}^+$,
\item[(ii)] there exists a $(d-1)$-dimensional smooth surface patch $\mathcal{A}_{eq}^b$ such that $\xi_o\in\mathcal{A}_{eq}^b\subset\partial\mathcal{A}_{eq}^+$,
\item[(iii)] the unit vector $\eta_o$ normal to $\mathcal{A}_{eq}^b$ (and pointing in $\mathcal{A}_{eq}^+$) at $\xi_o$ satisfies $\langle\eta_o,\xi_o\rangle>0$, and
\item[(iv)] $I_q(t\xi)=tI_q(\xi)=tI_a(\xi)=I_a(t\xi)$ for every $\xi\in\mathcal{A}_{eq}^b$ and $t\in[0,1]$.
\end{itemize}
\end{itemize}
\end{theorem}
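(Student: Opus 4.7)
The plan is to identify the target sets in the theorem with those of Theorem \ref{previousqual}: take $\mathcal{A}_{eq}:=\mathcal{A}_a$ in the non-nestling case and $\mathcal{A}_{eq}^+:=\mathcal{A}_a^+$, $\mathcal{A}_{eq}^b:=\mathcal{A}_a^b$ in the nestling case. The geometric statements (ii) and (iii) are then immediate from Theorem \ref{previousqual}(d)(iii)-(iv), and since $I_a\leq I_q$ pointwise on $\mathcal{D}$ (Jensen plus Fatou, as already used in the proof of Proposition \ref{sevgilihikmet}), the whole theorem reduces to the reverse inequality $I_q\leq I_a$ on these sets. Once this is granted, item (iv) follows automatically: for $\xi\in\mathcal{A}_a^b$ and $t\in[0,1]$, convexity of $I_q$ together with $I_q(0)=0$, $I_q(\xi)=I_a(\xi)$, and the homogeneity $I_a(t\xi)=tI_a(\xi)$ of Theorem \ref{previousqual}(d)(v) give $I_q(t\xi)\leq tI_q(\xi)=tI_a(\xi)=I_a(t\xi)\leq I_q(t\xi)$, forcing equality throughout.

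For $\xi$ in the interior of $\mathcal{A}_a$ or $\mathcal{A}_a^+$, I would build on the variational description of $I_a$ from \cite{YilmazAveraged}. Under (\textbf{T},$\hat{u}$) and (\ref{i.i.d.}), the regeneration slabs $(\tau_{k+1}-\tau_k,X_{\tau_{k+1}}-X_{\tau_k})$ are i.i.d.\ with exponential tails under $P_o$, and on the analyticity region $I_a$ arises as the Legendre transform of an analytic log-moment generating function of $(\tau_1,X_{\tau_1})$. In particular, to each such $\xi$ corresponds an explicit exponential tilt of the slab law that produces a renewal process with LLN velocity $\xi$ and exponential cost per step exactly $I_a(\xi)$. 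The goal is to realize this tilt at the quenched level.

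The heart of the argument is to show that for $\mathbb{P}$-a.e.\ $\omega$,
\[
F_n(\omega):=\log P_o^\omega\bigl(X_n/n\in B(\xi,\epsilon)\bigr)
\]
concentrates tightly around its expectation, i.e.\ $|F_n-\mathbb{E}F_n|=o(n)$ with exponential $\mathbb{P}$-tails. Combined with the trivial upper bound $F_n\leq 0$ and the deterministic lower bound $F_n\geq n\log\delta$, such concentration gives $\log\mathbb{E}[e^{F_n}]=\mathbb{E}F_n+o(n)$, and since $F_n/n\to -I_q(\xi)$ by Theorem \ref{qLDPgeneric} (with $\mathbb{E}F_n/n\to -I_q(\xi)$ by dominated convergence), this collapses the Jensen gap and yields $I_q(\xi)=I_a(\xi)$. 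I would pursue concentration through a martingale decomposition of $F_n$ along an enumeration of the sites $x\in\mathbb{Z}^d$: by uniform ellipticity, resampling $\omega_x$ alters $F_n$ by at most $C|\log\delta|$ times the number of distinct regeneration slabs whose range contains $x$ under a suitable tilted law. Squaring and summing reduces the estimate to bounding the expected number of pairs of slabs sharing a common site, and this is precisely where $d\geq 4$ enters, playing the role for regeneration slabs of the Erd\H{o}s--Taylor threshold for non-intersection of independent transient random walks.

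The main obstacle, I expect, will be executing this block-overlap estimate uniformly for $\xi$ ranging over compact subsets of $\mathcal{A}_a$ or $\mathcal{A}_a^+$. Two ingredients are required: first, that the exponential moment bound (\ref{moment}) persists under the tilt throughout the analyticity region (this should already be embedded in the construction of $\mathcal{A}_a$ and $\mathcal{A}_a^+$ in \cite{YilmazAveraged}); and second, a quantitative transverse renewal or local CLT estimate for the regeneration positions strong enough to push the expected slab-pair intersection count into $o(n^2)$ precisely when $d\geq 4$. The nestling boundary piece $\mathcal{A}_a^b$ is not attacked directly; once $I_q=I_a$ is established on the open set $\mathcal{A}_a^+$, it extends to $\mathcal{A}_a^b\subset\partial\mathcal{A}_a^+$ by continuity of both rate functions on $\mathcal{D}$ (Theorem 10.2 of \cite{Rockafellar}), and (iv) then follows from the convexity argument above.
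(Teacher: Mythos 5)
Your reduction of items (ii)--(iv) to Theorem \ref{previousqual} plus convexity and continuity of the rate functions is sound and essentially matches what the paper does at that stage. The problem is the core step: the claim that concentration of $F_n(\omega)=\log P_o^\omega(X_n/n\in B(\xi,\epsilon))$ around its mean ``collapses the Jensen gap.'' Quantify what you actually need. Since $F_n\leq 0$, the bound $\mathbb{E}\{\mathrm{e}^{F_n}\}\leq \mathrm{e}^{\mathbb{E}F_n+\epsilon n}+\mathbb{P}\{F_n>\mathbb{E}F_n+\epsilon n\}$ forces the tail rate at deviation scale $\epsilon n$ to exceed $I_q(\xi)-\epsilon$; equivalently, you need $\log\mathbb{E}\{\mathrm{e}^{F_n-\mathbb{E}F_n}\}=o(n)$, i.e.\ sub-Gaussian concentration with variance proxy $\sigma_n^2=o(n)$. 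Your martingale decomposition over sites cannot deliver this: the walk visits $\Theta(n)$ sites, each of which (by uniform ellipticity and the exponential tails of the regeneration slabs) has influence $\Theta(1)$ on $F_n$, so the Azuma-type square sum $\sum_x c_x^2$ is $\Theta(n)$, not $o(n)$ --- and no bound on the number of \emph{pairs of slabs of a single walk} sharing a site improves this, since that count is $O(n)$ in every dimension. An Azuma tail $\mathrm{e}^{-c\epsilon^2 n}$ at scale $\epsilon n$ beats $\mathrm{e}^{-(I_q(\xi)-\epsilon)n}$ only in the limit where $I_q(\xi)=0$, so your argument can at best recover the known fact $\mathcal{N}_q=\mathcal{N}_a$, not equality of the rate functions on an open set. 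Establishing $\mathrm{Var}(F_n)=o(n)$ is essentially equivalent to the theorem itself and is exactly what a single-walk influence bound cannot see (indeed in $d=1$ these fluctuations are of order $n$ and the theorem is false).

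The paper takes a structurally different route that circumvents this: it works in the dual variable, fixing $\theta$ in a small ball $\mathcal{C}_a(\kappa_{eq})$ and studying $g_n(\theta,\omega):=E_o^\omega[\exp\{\langle\theta,X_{H_n}\rangle-\Lambda_a(\theta)H_n\},\,H_n=\tau_k\mbox{ for some }k,\,\beta=\infty]$. A renewal theorem shows $\mathbb{E}\{g_n\}$ has a positive limit, and a second-moment bound $\sup_n\mathbb{E}\{g_n^2\}<\infty$ shows $g_n$ cannot vanish $\mathbb{P}$-a.s., which pins $\Lambda_q(\theta)=\Lambda_a(\theta)$; the sets $\mathcal{A}_{eq}$, $\mathcal{A}_{eq}^+$, $\mathcal{A}_{eq}^b$ are then the images of $\mathcal{C}_a(\kappa_{eq})$ and its boundary under $\nabla\Lambda_a$, and convex duality transfers the equality to $I_q=I_a$ (so the paper does \emph{not} prove equality on all of $\mathcal{A}_a$, only on a possibly smaller neighborhood). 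Crucially, the second moment $\mathbb{E}\{g_n^2\}$ is an expectation over \emph{two independent walks in the same environment}, and the $d\geq 4$ hypothesis enters through the Berger--Zeitouni result that such walks fail to intersect with uniformly positive probability --- the correct two-walk analogue of the Erd\H{o}s--Taylor threshold you allude to, but deployed in a renewal decomposition of a second moment rather than in a concentration inequality. If you want to salvage your outline, you should replace the concentration step by this second-moment scheme.
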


\noindent\textbf{Some remarks.}
\begin{itemize}
\item[1.] Since $I_q$ and $I_a$ are both continuous on $\mathcal{D}$, it is clear that $\mathcal{E}:=\left\{\xi\in\mathcal{D}:I_q(\xi)=I_a(\xi)\right\}$ is closed. Proposition \ref{sevgilihikmet} and Theorem \ref{QequalsA} imply that $\mathcal{D}\setminus\mathcal{E}$ and $\mathcal{E}$ both have nonempty interiors.
\item[2.] Assuming $d=1$, (\ref{ellipticity}) and (\ref{i.i.d.}), Comets et al.\ \cite{CGZ} use (\ref{montreal}) to show that $I_q(\xi)=I_a(\xi)$ if and only if $\xi=0$ or $I_a(\xi)=0$. In particular, Theorem \ref{QequalsA} cannot be generalized to $d\geq1$. Whether it can be generalized to $d\geq2$ is an open problem.
\item[3.] The analog of Theorem \ref{QequalsA} for so-called \textit{space-time} RWRE is proved in \cite{YilmazSpaceTime}.
\item[4.] Related results have been obtained for random walks in random potentials, cf.\ \cite{Flury,Nikos}, for directed polymers in random environments, cf.\ \cite{CometsShigaYoshida}, and for random walks on Galton-Watson trees, cf.\ \cite{RWREGW,lambdaGW,RWREH}.
\end{itemize}
\vspace{0.5cm}

\section{Proof of the main result}\label{turetmis}

\subsection{Outline}\label{outline}

For every $\theta\in\mathbb{R}^d$, consider the logarithmic moment generating functions $$\Lambda_q(\theta):=\lim_{n\to\infty}\frac{1}{n}\log E_o^\omega\left[\exp\{\langle\theta,X_n\rangle\}\right]\quad\mbox{and}\quad\Lambda_a(\theta):=\lim_{n\to\infty}\frac{1}{n}\log E_o\left[\exp\{\langle\theta,X_n\rangle\}\right].$$
By Varadhan's Lemma, cf.\ \cite{DemboZeitouni}, $\Lambda_q(\theta)=\sup_{\xi\in\mathbb{R}^d}\left\{\langle\theta,\xi\rangle - I_q(\xi)\right\}=I_q^*(\theta)$, the convex conjugate of $I_q$ at $\theta$. Similarly, $\Lambda_a(\theta)=I_a^*(\theta)$.

Assume $d\geq4$ and (\textbf{T},$\hat{u}$) for some $\hat{u}\in\mathcal{S}^{d-1}$. For every $n\geq0$, $\theta\in\mathbb{R}^d$ and $\omega\in\Omega$, define 
\begin{align}
H_n=H_n(\hat{u})&:=\inf\left\{i\geq0: \langle X_i,\hat{u}\rangle\geq n\right\},\quad\beta=\beta(\hat{u}):=\inf\left\{i\geq0: \langle X_i,\hat{u}\rangle<\langle X_o,\hat{u}\rangle\right\}\quad\mbox{and}\label{randomtimes}\\
g_n(\theta,\omega)&:=E_o^\omega\left[\exp\left\{\langle\theta,X_{H_n}\rangle-\Lambda_a(\theta)H_n\right\},H_n=\tau_k\ \mbox{for some }k\geq1, \beta=\infty\right].\nonumber
\end{align} %Note that $P_o(\beta(\hat{u})=\infty)>0$ simply by directional transience relative to $\hat{u}$.
When $|\theta|$ is sufficiently small (and $\Lambda_a(\theta)>0$ in the nestling case), we show that $\left(g_n(\theta,\cdot)\right)_{n\geq1}$ is bounded in $L^2(\mathbb{P})$ and $\mathbb{E}\left\{g_n(\theta,\cdot)\right\}$ converges to a nonzero limit as $n\to\infty$. These two facts imply that $\Lambda_q(\theta)=\Lambda_a(\theta)$.

Section \ref{L2section} is devoted to the $L^2$ estimate regarding $\left(g_n(\theta,\cdot)\right)_{n\geq1}$ which constitutes the core of this paper. Assuming that, the equality of the logarithmic moment generating functions is established in Subsection \ref{guyaispat_bir}. Finally, convex duality is used in Subsection \ref{guyaispat_iki} to prove Theorem \ref{QequalsA} by showing that the local equality of $\Lambda_q$ and $\Lambda_a$ implies the equality of $I_q$ and $I_a$ on certain subsets of $\mathcal{D}$.

We find it more convenient to work with regeneration times relative to a $z\in U$ rather than any $\hat{u}\in\mathcal{S}^{d-1}$. In Subsection \ref{prelim}, we give some results which imply that there is no loss of generality in doing so.

\subsection{Some preliminaries regarding regenerations}\label{prelim}

Assume $d\geq2$, (\ref{ellipticity}) and (\ref{i.i.d.}). 

\begin{lemma}[Sznitman \cite{SznitmanT}]\label{madabir}
Assume (\textbf{T},$\hat{u}$) for some $\hat{u}\in\mathcal{S}^{d-1}$.
\begin{itemize}
\item[(a)] $P_o(\beta(\hat{u})=\infty)>0$, and $\tau_1(\hat{u})$ has finite $P_o$-moments of arbitrary order.
\item[(b)] The LLN holds with a limiting velocity $\xi_o$ such that $\langle\xi_o,\hat{u}\rangle>0$.
\item[(c)] (\textbf{T},$\hat{v}$) is satisfied for every $\hat{v}\in\mathcal{S}^{d-1}$ such that $\langle\xi_o,\hat{v}\rangle>0$.
\end{itemize}
\end{lemma}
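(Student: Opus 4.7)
The plan is to leverage the regeneration structure enabled by (\textbf{T},$\hat{u}$) together with the exponential moment condition (\ref{moment}).

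\textbf{Part (a).} Under (\textbf{T},$\hat{u}$), directional transience (\ref{transience}) gives $\tau_1(\hat{u}) < \infty$ $P_o$-almost surely. By the defining inequalities (\ref{regenerationtimes_m}), the walk at time $\tau_1$ sits at a strict $\hat{u}$-record level from which it never descends. Applying the strong Markov property at $\tau_1$, combined with the shift-invariance of $\mathbb{P}$ provided by (\ref{i.i.d.}), one sees that the law of the shifted walk depends only on the fresh environment, and the occurrence of $\tau_1$ requires the shifted walk to satisfy $\beta(\hat{u})=\infty$; if $P_o(\beta(\hat{u}) = \infty)$ were zero, no such $j=\tau_1$ could exist, contradicting $\tau_1<\infty$ a.s. For the moment statement, (\ref{moment}) yields an exponential moment of $|X_{\tau_1}|$. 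Ballisticity in direction $\hat{u}$, itself a consequence of (\textbf{T},$\hat{u}$) proved via a bootstrap on hitting-time estimates for $H_n$, then bounds $\tau_1$ by a constant multiple of $\langle X_{\tau_1},\hat{u}\rangle$ up to correction terms with good tails, upgrading the exponential moment of $|X_{\tau_1}|$ to polynomial moments of all orders for $\tau_1$.

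\textbf{Part (b).} By (a) and the i.i.d.\ regeneration structure under (\ref{i.i.d.}), the blocks $(\tau_{m+1}-\tau_m,\,X_{\tau_{m+1}}-X_{\tau_m})_{m\geq1}$ form an i.i.d.\ sequence with finite first moment. The strong law applied coordinatewise yields
\[\frac{X_{\tau_m}}{\tau_m} \longrightarrow \xi_o := \frac{E_o[X_{\tau_2}-X_{\tau_1}]}{E_o[\tau_2-\tau_1]}\]
$P_o$-almost surely, and a standard squeeze between consecutive $\tau_m$'s (using the polynomial tails from (a)) extends this to $X_n/n\to\xi_o$. Since $\langle X_{\tau_2}-X_{\tau_1},\hat{u}\rangle>0$ $P_o$-a.s.\ by construction of the regeneration times, taking expectations gives $\langle\xi_o,\hat{u}\rangle>0$.

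\textbf{Part (c).} Fix $\hat{v}\in\mathcal{S}^{d-1}$ with $\langle\xi_o,\hat{v}\rangle>0$. The LLN from (b) immediately yields (\ref{transience}) relative to $\hat{v}$, so it remains to verify the exponential moment condition (\ref{moment}) for $\tau_1(\hat{v})$. The approach I would take is to transfer the exit-from-slab estimates available in direction $\hat{u}$ to slabs transverse to $\hat{v}$, exploiting $\langle\xi_o,\hat{v}\rangle>0$ to ensure that the walk exits $\hat{v}$-slabs through the far face with overwhelming probability, and then to control $\sup_{i\leq\tau_1(\hat{v})}|X_i|$ via a Borel--Cantelli argument on nested slabs. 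Making this quantitative so that $\langle X_{\tau_1(\hat{v})},\hat{v}\rangle$ inherits exponential moments appears to require the full strength of Sznitman's equivalence between (\textbf{T}) and his effective criterion (\textbf{T}$'$); this is the step I expect to be the main obstacle of the proof.
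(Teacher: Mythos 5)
There is a genuine gap, and it sits exactly where the substance of this lemma lies. Note first that the paper does not prove Lemma \ref{madabir} at all: it is imported wholesale from Sznitman's work on condition (\textbf{T}), so your attempt is really a reconstruction of Sznitman's theorems. The soft parts of your sketch are fine: the factorization argument showing $P_o(\beta(\hat{u})=\infty)>0$ from $P_o(\tau_1<\infty)=1$, the renewal-theoretic LLN in part (b) with $\xi_o=E_o[X_{\tau_2}-X_{\tau_1}]/E_o[\tau_2-\tau_1]$ and $\langle\xi_o,\hat{u}\rangle>0$ from $\langle X_{\tau_2}-X_{\tau_1},\hat{u}\rangle>0$, and the observation that the LLN gives directional transience relative to any $\hat{v}$ with $\langle\xi_o,\hat{v}\rangle>0$.

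The two hard claims are not established. For the moment bound in (a), your assertion that ballisticity ``bounds $\tau_1$ by a constant multiple of $\langle X_{\tau_1},\hat{u}\rangle$ up to correction terms with good tails'' is false for nestling walks: condition (\ref{moment}) controls the spatial excursion $\sup_{i\leq\tau_1}|X_i|$, but $\tau_1$ can be far larger than the displacement because the walk can linger in traps --- this slowdown phenomenon is precisely why $E_o[\tau_1^p]<\infty$ under (\textbf{T}) is a deep theorem. Sznitman proves it via atypical quenched estimates on the time spent in slabs (essentially the machinery behind the effective criterion (\textbf{T}$'$)), none of which is reproduced or even invoked correctly here; the same circularity also undermines part (b), whose LLN needs $E_o[\tau_2-\tau_1]<\infty$ as input. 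For part (c), you candidly state that verifying (\ref{moment}) for $\tau_1(\hat{v})$ ``appears to require the full strength of Sznitman's equivalence'' and that this is ``the main obstacle'' --- which is an accurate diagnosis, but it means the step is conceded rather than proved. (For contrast, the paper's own Lemma \ref{madauc} shows how, in the much easier non-nestling case, one transfers exponential moments from $\tau_1(\hat{u})$ to $\tau_1(\hat{v})$ by comparing regeneration levels; no such elementary comparison is available under (\textbf{T}) alone.) As it stands, the proposal proves the easy consequences while leaving Sznitman's two core estimates unproven.
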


\begin{lemma}[Sznitman \cite{SznitmanSlowdown}]\label{madaiki}
If the walk is non-nestling relative to some $\hat{u}\in\mathcal{S}^{d-1}$, then
$$E_o\left[\exp\left\{\kappa_2\tau_1(\hat{u})\right\}\right]<\infty$$ for some $\kappa_2>0$. In particular, (\textbf{T},$\hat{u}$) is satisfied.
\end{lemma}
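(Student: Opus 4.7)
The plan is to combine a martingale large-deviation estimate (exploiting the uniform positive drift coming from non-nestling) with the standard Sznitman-Zerner regeneration structure.

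First, set $\delta_0:=\mathrm{ess}\inf_{\mathbb{P}}\sum_{z\in U}\pi(0,z)\langle z,\hat u\rangle>0$. For every $\omega$, $M_n:=\langle X_n,\hat u\rangle-\sum_{i=0}^{n-1}d(X_i,\omega)$, with $d(x,\omega):=\sum_z\pi(x,x+z)\langle z,\hat u\rangle\geq\delta_0$, is a $P_o^\omega$-martingale whose increments are bounded by $2$ in absolute value. Azuma-Hoeffding applied to $-M_n$ and optimization over a small $\lambda>0$ yield a constant $c>0$ such that $P_o^\omega(\langle X_n,\hat u\rangle\leq\tfrac{\delta_0}{2}n)\leq e^{-cn}$ for every $n\geq1$, \emph{uniformly} in $\omega$. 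In particular, $\langle X_n,\hat u\rangle\to\infty$ $P_o^\omega$-a.s.\ by Borel-Cantelli, so the walk is directionally transient relative to $\hat u$; moreover a standard potential-theoretic argument (based on the harmonicity of the hitting probability of the half-space $\{\langle\,\cdot\,,\hat u\rangle\leq-1\}$ together with the tail estimate above) gives $P_o^\omega(\beta(\hat u)=\infty)\geq p$ for some deterministic $p>0$, which in particular implies $P_o(\beta(\hat u)=\infty)\geq p$.

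Next, implement the Sznitman-Zerner construction: let $T_k$ be the $k$-th strict ladder time of $\langle X_\cdot,\hat u\rangle$, so $\tau_1(\hat u)=T_K$ where $K:=\inf\{k\geq1:\beta\circ\theta_{T_k}=\infty\}$. By the quenched strong Markov property and the previous step, each ``attempt'' succeeds with conditional probability at least $p$, so $K$ is stochastically dominated by a geometric random variable with parameter $p$. The same Azuma tail bound shows that each hitting time $H_n(\hat u)$ has an exponential moment growing only linearly in $n$, and a routine geometric-series argument then produces a uniform exponential moment for each gap $T_{k+1}-T_k$ (which is essentially the waiting time to push the running maximum up by one step in the $\hat u$-direction). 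Writing $\tau_1(\hat u)=\sum_{k=1}^{K}(T_k-T_{k-1})$ as a geometric sum of random variables with a uniform exponential moment delivers $E_o[\exp(\kappa_2\tau_1(\hat u))]<\infty$ for some $\kappa_2>0$. Since $|X_i|\leq i\leq\tau_1(\hat u)$ for every $1\leq i\leq\tau_1(\hat u)$ by the nearest-neighbor property, (\ref{moment}) follows immediately and therefore (\textbf{T},$\hat u$) holds.

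The principal obstacle is keeping every estimate uniform in $\omega$; this is ultimately possible only because non-nestling provides an almost sure \emph{pointwise} lower bound on the local drift at every site, which feeds into Azuma without any averaging over the environment. A secondary technical point is that $\hat u$ need not be a lattice direction, so the ladder times and the attempt structure must be set up with respect to the real-valued process $\langle X_n,\hat u\rangle$ rather than an integer-valued projection.
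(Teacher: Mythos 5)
The paper offers no proof of this lemma; it is quoted from Sznitman's slowdown paper, so your task was to reconstruct the argument from scratch, and your plan does follow the standard route: a quenched Azuma bound exploiting the \emph{pointwise} drift lower bound $d(x,\omega)\geq\delta_0$, a uniform lower bound $P_o^\omega(\beta(\hat u)=\infty)\geq p$, and the Sznitman--Zerner attempt structure. Two remarks. First, for the uniform bound on $P_o^\omega(\beta=\infty)$ the cleanest rigorous implementation is not the fixed-time tail estimate (summing $e^{-cn}$ over $n$ gives a finite bound, not one below $1$) but the exponential supermartingale: for small $\lambda>0$ one has $E_x^\omega\left[e^{-\lambda\langle X_1-X_0,\hat u\rangle}\right]\leq 1-\lambda\delta_0+C\lambda^2=:\rho<1$ uniformly in $x,\omega$, and optional stopping for $e^{-\lambda\langle X_n,\hat u\rangle}\rho^{-n}$ after one forced step in a direction with $\langle z,\hat u\rangle\geq 1/\sqrt d$ yields the desired $p=\delta(1-\rho)>0$. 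This is the step where non-nestling (rather than mere transience) is indispensable, as you correctly emphasize.

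Second, and this is the one place where your write-up has a genuine gap: with $T_k$ the $k$-th \emph{strict ladder time}, the identity $\tau_1=T_K$ with $K=\inf\{k:\beta\circ\theta_{T_k}=\infty\}$ is correct, but $K$ is \emph{not} obviously dominated by a geometric variable, because the event $\{\beta\circ\theta_{T_k}<\infty\}$ (attempt $k$ fails) is not $\mathcal F_{T_{k+1}}$-measurable: the backtracking below level $\langle X_{T_k},\hat u\rangle$ may occur long after $T_{k+1}$, so you cannot iterate the strong Markov bound $P^\omega_{X_{T_{k+1}}}(\beta<\infty)\leq 1-p$ against the event that all previous attempts failed. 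This is precisely why Sznitman and Zerner introduce the times $S_k$ and $R_k$: the $k$-th attempt begins only after the $(k-1)$-st has \emph{definitively} failed at the finite time $R_{k-1}$, and starts from a level strictly above the running maximum $M_{k-1}$ observed by then, so that $\{K>k-1\}\in\mathcal F_{S_k}$ and the geometric bound $P(K>k)\leq(1-p)^k$ follows. With that structure the gaps $S_k-S_{k-1}$ still have uniform exponential moments (the duration of a failed attempt has exponential tails by your Azuma bound applied to $\{n<\beta<\infty\}$, and the subsequent climb back to a new maximum covers a height controlled by that duration), and your geometric-sum argument, made quantitative via H\"older to get $E^\omega\left[e^{\kappa(S_k-S_{k-1})}\mathbf 1_{\{\text{attempt fails}\}}\,\middle|\,\mathcal F_{S_{k-1}}\right]<1$ for small $\kappa$, then delivers the exponential moment of $\tau_1$. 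Your final observation that $\sup_{1\leq i\leq\tau_1}|X_i|\leq\tau_1$ reduces (\ref{moment}) to this exponential moment is correct.
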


\begin{lemma}\label{madauc}
If the walk is non-nestling and some $\hat{v}\in\mathcal{S}^{d-1}$ satisfies $\langle\xi_o,\hat{v}\rangle>0$, then $$E_o\left[\exp\left\{c\tau_1(\hat{v})\right\}\right]<\infty$$ for some $c>0$.
\end{lemma}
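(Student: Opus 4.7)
The plan is to bound $\tau_1(\hat{v})\leq\tau_N'$, where $\tau_m':=\tau_m(\hat{u})$ for a suitable non-nestling direction $\hat{u}$ and $N$ is a random integer with exponential moments, and then combine this with the exponential moments of $T_m:=\tau_m'-\tau_{m-1}'$. Since the walk is non-nestling, fix some $\hat{u}\in\mathcal{S}^{d-1}$ relative to which it is non-nestling; Lemma \ref{madaiki} gives $E_o[\exp\{\kappa_2T_1\}]<\infty$, and Lemma \ref{madabir}(b),(c) give $\langle\xi_o,\hat{u}\rangle>0$ together with (\textbf{T},$\hat{v}$). The Sznitman--Zerner renewal property yields that $\{(T_m,\,X_{\tau_m'}-X_{\tau_{m-1}'})\}_{m\geq 2}$ is i.i.d.\ under $P_o$ with common law equal to that of $(\tau_1',X_{\tau_1'})$ under $P_o[\,\cdot\,|\,\beta(\hat{u})=\infty]$; since $|X_{\tau_m'}-X_{\tau_{m-1}'}|\leq T_m$, the random walk $Y_m:=\langle X_{\tau_m'},\hat{v}\rangle$ has exponential moments and, for $m\geq 2$, i.i.d.\ increments with mean proportional to $\langle\xi_o,\hat{v}\rangle>0$.

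Set $N:=\inf\{m\geq 1:\tau_m'\text{ is a }\hat{v}\text{-regeneration time}\}$, so that $\tau_1(\hat{v})\leq\tau_N'$. The event $\{\tau_m'\text{ is a }\hat{v}\text{-regeneration}\}$ is the intersection of (a) the past-record condition $\{Y_m>\langle X_k,\hat{v}\rangle\text{ for all }k<\tau_m'\}$, measurable w.r.t.\ the walk up to $\tau_m'$, and (b) the future no-backtrack condition $\{\langle X_k,\hat{v}\rangle\geq Y_m\text{ for all }k>\tau_m'\}$. By the renewal structure at $\tau_m'$, condition (b) has conditional probability $q:=P_o[\beta(\hat{v})=\infty\,|\,\beta(\hat{u})=\infty]=P_o(\beta(\hat{u})=\infty,\beta(\hat{v})=\infty)/P_o(\beta(\hat{u})=\infty)$, which is strictly positive because the joint event has positive probability by uniform ellipticity (\ref{ellipticity}) together with $P_o(\beta(\hat{u})=\infty)>0$ and $P_o(\beta(\hat{v})=\infty)>0$ from Lemma \ref{madabir}(a).

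The main obstacle is to show $P_o(N>m)\leq e^{-cm}$. Let $Z_m:=\max_{\tau_{m-1}'\leq k\leq\tau_m'}\langle X_k,\hat{v}\rangle$; then $0\leq Z_m-Y_{m-1}\leq T_m$ and these overshoots are i.i.d.\ (for $m\geq 2$) with exponential moments. Condition (a) at step $m$ reads $Y_m>\max_{j<m}Z_j$. Because $Y_m$ has positive drift and exponential moments while the overshoots $Z_j-Y_{j-1}$ have exponential tails, an application of Cramér's theorem for i.i.d.\ sums produces an infinite sequence of ``ladder'' indices $m_1<m_2<\cdots$ at which (a) holds, with i.i.d.\ gaps $m_{k+1}-m_k$ of exponential tail. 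By the renewal property at each $\tau_{m_k}'$, condition (b) is an independent Bernoulli$(q)$ trial, so $N\leq m_K$ where $K$ is geometric with parameter $q$, independent of the gaps; hence $N$ has exponential moments.

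Finally, combining exponential moments of $N$ with exponential moments of the i.i.d.\ $T_m$, a Cauchy--Schwarz decomposition of $E_o[\exp\{c\tau_N'\}]=\sum_m E_o[\exp\{c\sum_{j\leq m}T_j\}\one\{N=m\}]$ (or equivalently Wald's identity for moment generating functions applied to a stopping-time upper bound of $N$) gives $E_o[\exp\{c\tau_N'\}]<\infty$ for sufficiently small $c>0$, and hence the claimed exponential moments for $\tau_1(\hat{v})$. The delicate point throughout is to arrange the ladder indices, the Bernoulli trials, and the stopping-time domination so that the required independences genuinely hold for the moment generating function estimate.
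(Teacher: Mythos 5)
Your strategy---dominating $\tau_1(\hat v)$ by $\tau_N'$ for a random index $N$ with exponential moments---could in principle work, but as written it has a genuine gap at its central step, namely the claim that the no-backtracking events (b) at the successive ladder indices $m_1<m_2<\cdots$ form an independent Bernoulli$(q)$ sequence independent of the gaps, so that $N\leq m_K$ with $K$ geometric. The renewal property at a $\hat u$-regeneration time gives you the unconditional probability $q$ of (b) at a \emph{single} candidate, but once you condition on (b) \emph{failing} at $\tau_{m_1}'$, the law of the future path is $P_o(\,\cdot\mid\beta(\hat u)=\infty,\ \beta(\hat v)<\infty)$ rather than $P_o(\,\cdot\mid\beta(\hat u)=\infty)$; this changes the distribution of the subsequent increments, hence of the later ladder indices and of the later trials. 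This is exactly the difficulty that the Sznitman--Zerner iterative construction of regeneration times is designed to circumvent, and your proposal acknowledges it as ``the delicate point'' without resolving it. The companion claim that Cram\'er's theorem yields ladder epochs (for the running maximum of $\langle X_k,\hat v\rangle$ against the overshoots $Z_j-Y_{j-1}$) with i.i.d.\ exponentially-tailed gaps is also asserted rather than proved, and your condition (a) omits the portion of the path strictly between $\tau_{m-1}'$ and $\tau_m'$. In effect you are re-deriving tail estimates for $\hat v$-regenerations from scratch, which is the hard part of the lemma.

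The paper avoids all of this with a short comparison argument. Choosing $\hat u$ non-nestling with rational coordinates and $a\in\mathbb{N}$ with $a\hat u\in\mathbb{Z}^d$, each $\hat u$-regeneration advances $\langle X_{\tau_m(\hat u)},\hat u\rangle$ by at least $1/a$, so $|X_{\tau_{ak+1}(\hat u)}|>k$; hence on the event $\{\sup_{1\leq i\leq\tau_1(\hat v)}|X_i|\in(k-1,k]\}$ one has the deterministic bound $\tau_1(\hat v)\leq\tau_{ak+1}(\hat u)$. Decomposing $E_o[\exp\{c\tau_1(\hat v)\}]$ over $k$, paying $\mathrm{e}^{-c'(k-1)}\sup_{i\leq\tau_1(\hat v)}\exp\{c'|X_i|\}$ for the indicator, and applying Cauchy--Schwarz reduces everything to two known exponential moments: $E_o[\sup_{i\leq\tau_1(\hat v)}\exp\{2c'|X_i|\}]<\infty$ from (\textbf{T},$\hat v$) (Lemma \ref{madabir}), and $E_o[\exp\{2c\,\tau_{ak+1}(\hat u)\}]^{1/2}$ growing geometrically with rate $<\mathrm{e}^{c'}$ for small $c$, from Lemma \ref{madaiki} and the renewal structure. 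If you want to salvage your route, you would need to carry out the full Sznitman--Zerner-type iteration for the coupled directions; otherwise the comparison argument is the intended and much shorter path.
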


\begin{proof}
Since the walk is non-nestling, (\ref{nonmumu}) holds for some $\hat{u}\in\mathcal{S}^{d-1}$ with rational coordinates. Let $a\geq1$ be an integer such that
$a\hat{u}$ has integer coordinates. Note that $\langle x,\hat{u}\rangle>0$ if and only if $\langle x,\hat{u}\rangle\geq\frac1a$ for $x\in\mathbb{Z}^d$.
Therefore, $\left|X_{\tau_{ak+1}(\hat{u})}\right|\geq\langle X_{\tau_{ak+1}(\hat{u})},\hat{u}\rangle>k$ for every $k\geq1$.

For every $c,c'>0$ and $\hat{v}\in\mathcal{S}^{d-1}$ such that $\langle\xi_o,\hat{v}\rangle>0$,
\begin{align}
E_o\left[\exp\left\{c\tau_1(\hat{v})\right\}\right]&=\sum_{k=1}^\infty E_o\left[\exp\left\{c\tau_1(\hat{v})\right\},\sup_{1\leq i\leq\tau_1(\hat{v})}\!\!\!\!\left|X_i\right|\in(k-1,k]\right]\nonumber\\
&\leq\sum_{k=1}^\infty E_o\left[\exp\left\{c\tau_{ak+1}(\hat{u})\right\},\sup_{1\leq i\leq\tau_1(\hat{v})}\!\!\!\!\left|X_i\right|\in(k-1,k]\right]\nonumber\\
&\leq\sum_{k=1}^\infty E_o\left[\exp\left\{c\tau_{ak+1}(\hat{u})\right\}\left(\sup_{1\leq i\leq\tau_1(\hat{v})}\exp\left\{c'\left|X_i\right|\right\}\right)\right]\exp\{-c'(k-1)\}\nonumber\\
&\leq E_o\left[\sup_{1\leq i\leq\tau_1(\hat{v})}\exp\left\{2c'\left|X_i\right|\right\}\right]^{1/2}\sum_{k=1}^\infty E_o\left[\exp\left\{2c\tau_{ak+1}(\hat{u})\right\}\right]^{1/2}\exp\{-c'(k-1)\}.\label{sonuncu}
\end{align}
Note that (\textbf{T},$\hat{u}$) is satisfied by Lemma \ref{madaiki}. Since $\langle\xi_o,\hat{v}\rangle>0$, it follows from Lemma \ref{madabir} that (\textbf{T},$\hat{v}$) is satisfied as well. Therefore, (\ref{moment}) implies that the first term in (\ref{sonuncu}) is finite when $c'>0$ is small enough.

It is immediate from the renewal structure that $$E_o\left[\exp\left\{2c\tau_{ak+1}(\hat{u})\right\}\right]^{1/2}=E_o\left[\exp\left\{2c\tau_{1}(\hat{u})\right\}\right]^{1/2}E_o\left[\left.\exp\left\{2c\tau_{1}(\hat{u})\right\}\right|\beta(\hat{u})=\infty\right]^{ak/2}.$$ By Lemma \ref{madaiki}, $E_o\left[\left.\exp\left\{\kappa_2\tau_{1}(\hat{u})\right\}\right|\beta(\hat{u})=\infty\right]<\infty$ for some $\kappa_2>0$. When $c>0$ is small enough, $$E_o\left[\left.\exp\left\{2c\tau_{1}(\hat{u})\right\}\right|\beta(\hat{u})=\infty\right]^{a/2}\leq E_o\left[\left.\exp\left\{\kappa_2\tau_{1}(\hat{u})\right\}\right|\beta(\hat{u})=\infty\right]^{ac/\kappa_2}<\mathrm{e}^{c'}$$ and the summation in (\ref{sonuncu}) is finite. This implies the desired result.
\end{proof}

\begin{corollary}\label{mazaltov}
Assume (\textbf{T},$\hat{u}$) for some $\hat{u}\in\mathcal{S}^{d-1}$. Since $\xi_o\neq0$, $\langle\xi_o,z\rangle>0$ for some $z\in U$. %Clearly, $P_o\left(\beta(z)=\infty\right)>0$.
\begin{itemize}
\item[(a)] $P_o(\beta(z)=\infty)>0$, and $\tau_1(z)$ has finite $P_o$-moments of arbitrary order.
\item[(b)] If the walk is non-nestling, then there exists a $\kappa_3>0$ such that $$E_o\left[\exp\left\{2\kappa_3\tau_1(z)\right\}\right]<\infty.$$
\item[(c)] If the walk is nestling, then there exists a $\kappa_3>0$ such that $$E_o\left[\sup_{1\leq i\leq\tau_1(z)}\exp\left\{\kappa_3\left|X_i\right|\right\}\right]<\infty.$$
\end{itemize}
\end{corollary}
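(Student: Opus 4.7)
The plan is straightforward: each of the three parts reduces to one of the preceding lemmas, applied not to the original direction $\hat{u}$ but to a suitably chosen lattice unit vector $z\in U$. The only real step is to check that (\textbf{T},$z$) is available for such a $z$.

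First I would produce the $z$ and activate (\textbf{T},$z$). Since $\xi_o\neq 0$, at least one coordinate of $\xi_o$ is nonzero, so I can set $z:=\mathrm{sgn}((\xi_o)_i)\,e_i\in U$ for the appropriate index $i$; then $\langle\xi_o,z\rangle>0$. Because $z$ lies in $\mathcal{S}^{d-1}$, Lemma \ref{madabir}(c) (applied with $\hat{v}=z$) yields (\textbf{T},$z$). Part (a) is then immediate: apply Lemma \ref{madabir}(a) with $\hat{u}$ replaced by $z$ to get both $P_o(\beta(z)=\infty)>0$ and finite $P_o$-moments of $\tau_1(z)$ of every order.

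For part (b), the walk is non-nestling and $\langle\xi_o,z\rangle>0$, so Lemma \ref{madauc} with $\hat{v}=z$ produces $c>0$ with $E_o[\exp\{c\tau_1(z)\}]<\infty$, and I would set $\kappa_3:=c/2$. For part (c), the estimate requested is literally the moment condition (\ref{moment}) in the definition of (\textbf{T},$z$), which we just verified; take $\kappa_3:=\kappa_1$ from that condition.

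There is essentially no obstacle: the corollary is a repackaging of Lemmas \ref{madabir}, \ref{madaiki} and \ref{madauc} into a form convenient for the rest of the paper, where it is more natural to regenerate along a coordinate direction $z\in U$ than along a general $\hat{u}\in\mathcal{S}^{d-1}$. The only content is the opening observation that some $z\in U$ inherits a positive inner product with $\xi_o$, which lets us transfer (\textbf{T},$\hat{u}$) to (\textbf{T},$z$) via Lemma \ref{madabir}(c).
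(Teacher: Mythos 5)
Your proof is correct and is exactly the derivation the paper intends: the corollary is stated without proof precisely because it follows by choosing $z\in U$ with $\langle\xi_o,z\rangle>0$, transferring (\textbf{T},$\hat{u}$) to (\textbf{T},$z$) via Lemma \ref{madabir}(c), and then reading off (a) from Lemma \ref{madabir}(a), (b) from Lemma \ref{madauc}, and (c) from the moment condition (\ref{moment}) in the definition of (\textbf{T},$z$).
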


\subsection{Equality of the logarithmic moment generating functions}\label{guyaispat_bir}

Assume $d\geq4$, (\ref{ellipticity}), (\ref{i.i.d.}) and (\textbf{T},$\hat{u}$) for some $\hat{u}\in\mathcal{S}^{d-1}$. Since $\xi_o\neq0$, $\langle\xi_o,z\rangle>0$ for some $z\in U$. Assume WLOG that $\langle\xi_o,e_1\rangle>0$. Refer to (\ref{regenerationtimes_m}) and (\ref{randomtimes}) for the definitions of $$(\tau_m)_{m\geq1}=(\tau_m(e_1))_{m\geq1},\quad(H_n)_{n\geq0}=(H_n(e_1))_{n\geq0}\quad\mbox{and}\quad\beta=\beta(e_1).$$

Fix $\kappa_3$ as in Corollary \ref{mazaltov}. For every $\kappa\in(0,\kappa_3]$, define
\begin{equation}\label{nebiliyim}
\mathcal{C}_a(\kappa):=\left\{\begin{array}{ll}
\left\{\theta\in\mathbb{R}^d:|\theta|<\kappa\right\}&\mbox{if the walk is non-nestling,}\\
\left\{\theta\in\mathbb{R}^d:|\theta|<\kappa\,, \Lambda_a(\theta)>0\right\}&\mbox{if the walk is nestling.}
\end{array}\right.
\end{equation}
By Jensen's inequality, 
\begin{equation}\label{noldush}
\langle\theta,\xi_o\rangle=\lim_{n\to\infty}\frac{1}{n} E_o\left[\langle\theta,X_n\rangle\right]\leq\lim_{n\to\infty}\frac{1}{n}\log E_o\left[\exp\{\langle\theta,X_n\rangle\}\right]=\Lambda_a(\theta)\leq\lim_{n\to\infty}\frac{1}{n}\log E_o\left[\mathrm{e}^{|\theta|n}\right]=|\theta|.
\end{equation} In the nestling case, $\left\{\theta\in\mathbb{R}^d:|\theta|<\kappa,\,\langle\theta,\xi_o\rangle>0\right\}\subset\mathcal{C}_a(\kappa)$ by (\ref{noldush}). Hence, $\mathcal{C}_a(\kappa)$ is a non-empty open set both for nestling and non-nestling walks.

\begin{lemma}\label{phillysh_one}
$E_o\left[\left.\exp\{\langle\theta,X_{\tau_1}\rangle-\Lambda_a(\theta)\tau_1\}\right|\beta=\infty\right]=1$ for every $\theta\in\mathcal{C}_a(\kappa_3)$.
\end{lemma}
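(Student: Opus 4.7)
The idea is to encode the identity $\Phi(\theta) := E_o[\exp\{\langle\theta,X_{\tau_1}\rangle - \Lambda_a(\theta)\tau_1\}\mid\beta=\infty]=1$ as the identification of the critical exponent of a renewal generating function. Fix $\theta\in\mathcal{C}_a(\kappa_3)$ and, for a real parameter $\lambda$ ranging over a two-sided neighborhood of $\Lambda_a(\theta)$, introduce
\begin{align*}
G(\theta,\lambda)&:=\sum_{n\geq0}\mathrm{e}^{-\lambda n}E_o[\exp\{\langle\theta,X_n\rangle\}],\\
\Phi(\theta,\lambda)&:=E_o\bigl[\exp\{\langle\theta,X_{\tau_1}\rangle-\lambda\tau_1\}\,\bigm|\,\beta=\infty\bigr],\\
A(\theta,\lambda)&:=E_o\bigl[\exp\{\langle\theta,X_{\tau_1}\rangle-\lambda\tau_1\}\bigr],\\
R(\theta,\lambda)&:=E_o\!\left[\sum_{j=0}^{\tau_1-1}\exp\{\langle\theta,X_j\rangle-\lambda j\}\,\bigg|\,\beta=\infty\right],
\end{align*}
together with the unconditioned analog $R_1(\theta,\lambda)$ of $R(\theta,\lambda)$. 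Splitting $G(\theta,\lambda)$ according to the unique index $k\geq 0$ with $\tau_k\leq n<\tau_{k+1}$, and exploiting both the independence of the pre-$\tau_1$ segment from the rest of the walk and the i.i.d.\ structure of the blocks $(X_{\tau_{m+1}}-X_{\tau_m},\tau_{m+1}-\tau_m)_{m\geq1}$ (whose common law is that of $(X_{\tau_1},\tau_1)$ under $P_o(\,\cdot\,|\,\beta=\infty)$), one obtains the renewal identity
$$G(\theta,\lambda)=R_1(\theta,\lambda)+A(\theta,\lambda)R(\theta,\lambda)\sum_{k=0}^{\infty}\Phi(\theta,\lambda)^{k}.$$
Consequently, assuming $R_1,A,R$ are positive and finite, $G(\theta,\lambda)<\infty$ if and only if $\Phi(\theta,\lambda)<1$.

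My second step is to verify that $R_1,A,R,\Phi$ are positive and finite on a two-sided neighborhood of $\Lambda_a(\theta)$, and that $\Phi$ is continuous in $\lambda$ there. In the non-nestling case this follows from the bounds $|X_i|\leq i$ and $|\Lambda_a(\theta)|\leq|\theta|<\kappa_3$ together with the exponential moment $E_o[\exp\{2\kappa_3\tau_1\}]<\infty$ from Corollary~\ref{mazaltov}(b); dominated convergence yields continuity. In the nestling case, where $\tau_1$ need not have exponential moments, the assumption $\Lambda_a(\theta)>0$ built into $\mathcal{C}_a(\kappa_3)$ is essential: it keeps $\mathrm{e}^{-\lambda\tau_1}\leq 1$ for $\lambda\geq0$ in a neighborhood of $\Lambda_a(\theta)$, after which Corollary~\ref{mazaltov}(c) controls $\exp\{\langle\theta,X_{\tau_1}\rangle\}$; the extra $\sum_{j=0}^{\tau_1-1}$ factor in $R_1$ and $R$ is absorbed using the fact that $\tau_1$ has polynomial moments of all orders, via Cauchy--Schwarz (possibly shrinking $\kappa_3$ slightly).

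To conclude, $G(\theta,\cdot)$ is a Dirichlet series in $\mathrm{e}^{-\lambda}$ with non-negative coefficients and abscissa of convergence equal to $\Lambda_a(\theta)$; hence $G(\theta,\lambda)<\infty$ for $\lambda>\Lambda_a(\theta)$ and $G(\theta,\lambda)=\infty$ for $\lambda<\Lambda_a(\theta)$. Via the renewal identity this forces $\Phi(\theta,\lambda)<1$ for $\lambda>\Lambda_a(\theta)$ and $\Phi(\theta,\lambda)\geq 1$ for $\lambda<\Lambda_a(\theta)$, so the continuity of $\Phi$ pins $\Phi(\theta,\Lambda_a(\theta))=1$, which is the stated identity. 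The chief technical obstacle I anticipate is controlling the integrability of $R_1$ and $R$ uniformly in $\lambda$ in the nestling case; this is precisely where the hypothesis $\Lambda_a(\theta)>0$ does the essential work.
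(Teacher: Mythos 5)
The paper does not prove this lemma itself; it simply cites Lemma 12 of \cite{YilmazAveraged}, and your renewal/generating-function argument is essentially the standard proof given there: decompose $\sum_n \mathrm{e}^{-\lambda n}E_o[\exp\{\langle\theta,X_n\rangle\}]$ along regeneration blocks, observe that finiteness is equivalent to $\Phi(\theta,\lambda)<1$, identify the abscissa of convergence with $\Lambda_a(\theta)$, and conclude by continuity and monotonicity of $\lambda\mapsto\Phi(\theta,\lambda)$. The argument is correct. One small point: in the nestling case you should not need to ``shrink $\kappa_3$ slightly,'' since the lemma is asserted on all of $\mathcal{C}_a(\kappa_3)$. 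Instead of Cauchy--Schwarz, apply H\"older with exponents $q>1$ chosen so close to $1$ that $q|\theta|<\kappa_3$ (possible because $|\theta|<\kappa_3$ strictly) and its conjugate $p$; the factor $\tau_1$ coming from $\sum_{j=0}^{\tau_1-1}$ is then absorbed by $E_o[\tau_1^p]<\infty$ from Corollary \ref{mazaltov}(a), while $E_o[\sup_{1\leq i\leq\tau_1}\exp\{q|\theta|\,|X_i|\}]<\infty$ by Corollary \ref{mazaltov}(c).
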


\begin{proof}
This is Lemma 12 of \cite{YilmazAveraged}.
\end{proof}

For every $\theta\in\mathcal{C}_a(\kappa_3)$ and $y\in\mathbb{Z}^d$, let
\begin{equation}\label{immydit}
q^\theta(y):=E_o\left[\left.\exp\{\langle\theta,X_{\tau_1}\rangle-\Lambda_a(\theta)\tau_1\}, X_{\tau_1}=y\,\right|\beta=\infty\right].
\end{equation}
Since $\sum_{y\in\mathbb{Z}^d}q^\theta(y)=1$ by Lemma \ref{phillysh_one}, $\left(q^\theta(y)\right)_{y\in\mathbb{Z}^d}$ defines a random walk $(Y_k)_{k\geq0}$ on $\mathbb{Z}^d$. For every $x\in\mathbb{Z}^d$, this walk induces a probability measure $\hat{P}_x^\theta$ on paths starting at $x$. As usual, $\hat{E}_x^\theta$ denotes the corresponding expectation. It follows from Corollary \ref{mazaltov} and H\"older's inequality that
\begin{equation}\label{onedaylate}
\hat{E}_o^\theta\left[|Y_1|^m\right]<\infty\mbox{ for every }m\geq1.
\end{equation}

For every $n\geq1$, $\theta\in\mathcal{C}_a(\kappa_3)$ and $\omega\in\Omega$, recall from Subsection \ref{outline} that
$$g_n(\theta,\omega):=E_o^\omega\left[\exp\left\{\langle\theta,X_{H_n}\rangle-\Lambda_a(\theta)H_n\right\},H_n=\tau_k\ \mbox{for some }k\geq1, \beta=\infty\right].$$
\begin{lemma}\label{L1lemma}
For every $\theta\in\mathcal{C}_{a}(\kappa_3)$,
$$\lim_{n\to\infty}\mathbb{E}\left\{g_n(\theta,\cdot)\right\}={P_o(\beta=\infty)}/{\hat{E}_o^\theta\left[\langle Y_1,e_1\rangle\right]}>0.$$
\end{lemma}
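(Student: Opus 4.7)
The plan is to first use Fubini to swap $\mathbb{E}$ with $E_o^\omega$ (legitimate by non-negativity), then recast $\mathbb{E}\{g_n(\theta,\cdot)\}$ as the renewal function of a one-dimensional random walk governed by the distribution $q^\theta$, and finally invoke the discrete renewal theorem.

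After the swap,
$$\mathbb{E}\{g_n(\theta,\cdot)\}=E_o\!\left[\exp\{\langle\theta,X_{H_n}\rangle-\Lambda_a(\theta)H_n\},\ H_n=\tau_k\text{ for some }k\geq 1,\ \beta=\infty\right].$$
On $\{\beta=\infty\}$ the integer sequence $\hat{S}_k:=\langle X_{\tau_k},e_1\rangle$ is strictly increasing, and between consecutive regenerations the $e_1$-coordinate stays strictly below the next value of $\hat{S}_k$; nearest-neighbor continuity therefore forces $\{H_n=\tau_k\}=\{\hat{S}_k=n\}$. Decomposing over $k$, using that the exponential weight is multiplicative over regeneration increments, invoking the Sznitman--Zerner renewal decomposition (the increments $(X_{\tau_j}-X_{\tau_{j-1}},\tau_j-\tau_{j-1})_{j\geq 1}$ are i.i.d.\ with common law $(X_{\tau_1},\tau_1)$ under $P_o(\cdot|\beta=\infty)$), and applying Lemma \ref{phillysh_one} to identify $q^\theta$ with the step law of $Y_1$ under $\hat{P}_o^\theta$, one obtains
$$\mathbb{E}\{g_n(\theta,\cdot)\}=P_o(\beta=\infty)\sum_{k\geq 1}\hat{P}_o^\theta\!\left(\langle Y_1+\cdots+Y_k,e_1\rangle=n\right)=:P_o(\beta=\infty)\,u(n).$$

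The function $u$ is the renewal function for the i.i.d.\ positive-integer-valued steps $\langle Y_j,e_1\rangle$, which have finite mean by (\ref{onedaylate}). The Erd\H{o}s--Feller--Pollard discrete renewal theorem then gives $u(n)\to 1/\hat{E}_o^\theta[\langle Y_1,e_1\rangle]$ provided the step distribution is aperiodic (span $1$); this is the one point in the plan requiring separate care. Since the steps lie in $\{1,2,\ldots\}$, aperiodicity reduces to $q^\theta(e_1)>0$, i.e.\ $P_o(X_{\tau_1}=e_1,\beta=\infty)>0$. Uniform ellipticity (\ref{ellipticity}) gives $\pi(0,e_1)\geq\delta$, so after taking the first step to $e_1$ the quenched probability of staying at level $\geq 1$ thereafter averages by (\ref{i.i.d.}) to $P_o(\beta=\infty)>0$ (positive by Lemma \ref{madabir}(a)). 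Combining the limit for $u(n)$ with $P_o(\beta=\infty)>0$ and $\hat{E}_o^\theta[\langle Y_1,e_1\rangle]\in[1,\infty)$ yields both the announced value and its strict positivity.
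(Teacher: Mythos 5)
Your proposal is correct and follows essentially the same route as the paper: rewrite $\mathbb{E}\{g_n(\theta,\cdot)\}$ as an averaged expectation, use the nearest-neighbor identity $\{H_n=\tau_k\}=\{\langle X_{\tau_k},e_1\rangle=n\}$ together with the Sznitman--Zerner regeneration structure and Lemma \ref{phillysh_one} to obtain $P_o(\beta=\infty)\sum_{k\geq1}\hat{P}_o^\theta(\langle Y_k,e_1\rangle=n)$, and conclude by the discrete renewal theorem after checking aperiodicity via uniform ellipticity. The only differences are cosmetic (you cite Erd\H{o}s--Feller--Pollard where the paper cites Breiman, and you verify the slightly stronger condition $q^\theta(e_1)>0$ rather than $\hat{P}_o^\theta(\langle Y_1,e_1\rangle=1)>0$ directly).
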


\begin{proof}
For every $n\geq1$ and $\theta\in\mathcal{C}_{a}(\kappa_3)$,
\begin{align*}
\mathbb{E}\left\{g_n(\theta,\cdot)\right\}&=E_o\left[\exp\left\{\langle\theta,X_{H_n}\rangle-\Lambda_a(\theta)H_n\right\},H_n=\tau_k\ \mbox{for some }k\geq1,\beta=\infty\right]\\
&=P_o(\beta=\infty)\sum_{k=1}^{\infty}E_o\left[\left.\exp\left\{\langle\theta,X_{H_n}\rangle-\Lambda_a(\theta)H_n\right\},H_n=\tau_k\,\right|\beta=\infty\right]\\
&=P_o(\beta=\infty)\sum_{k=1}^{\infty}E_o\left[\left.\exp\left\{\langle\theta,X_{\tau_k}\rangle-\Lambda_a(\theta)\tau_k\right\},\langle X_{\tau_k},e_1\rangle =n\,\right|\beta=\infty\right]\\
&=P_o(\beta=\infty)\sum_{k=1}^{\infty}\hat{P}_o^\theta\left(\langle Y_k,e_1\rangle =n\right).
\end{align*}
Note that $\hat{P}_o^\theta\left(\langle Y_1,e_1\rangle =1\right)>0$ by (\ref{ellipticity}) and part (a) of Lemma \ref{madabir}. Hence, the desired result follows from the renewal theorem for aperiodic sequences, cf.\ Theorem 10.8 of \cite{Breiman}.
%The sequence $\left\{m\geq1: \hat{P}_o^\theta\left(\langle Y_1,e_1\rangle =m\right)>0\right\}=\left\{m\geq1\right\}$ is aperiodic by , and the desired result follows from the renewal theorem, (cf.\ \cite{Breiman}, Theorem 10.8.)
\end{proof}

\begin{lemma}\label{L2lemma}
There exists a $\kappa_{eq}\in(0,\kappa_3)$ such that
$$\sup_{n\geq1}\mathbb{E}\left\{g_n(\theta,\cdot)^2\right\}<\infty$$ for every $\theta\in\mathcal{C}_a(\kappa_{eq})$.
\end{lemma}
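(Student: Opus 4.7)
The strategy is a quenched second-moment computation using two independent walks in the same environment, combined with a joint-regeneration decomposition that exploits the i.i.d.\ structure of $\mathbb{P}$ to factorize the environment average.

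\emph{Two-walk expansion and environment factorization.} Let $(X_i),(\tilde X_i)$ be two independent walks under $P_o^\omega\otimes P_o^\omega$, with regeneration times $\tau_k,\tilde\tau_k$ relative to $e_1$ and hitting times $H_n,\tilde H_n$. Writing
\[\mathbb{E}\{g_n(\theta,\cdot)^2\}=\mathbb{E}\,E_o^{\omega,\otimes 2}\!\bigl[\exp\{\langle\theta,X_{H_n}+\tilde X_{\tilde H_n}\rangle-\Lambda_a(\theta)(H_n+\tilde H_n)\},\,A_n(X)\cap A_n(\tilde X)\bigr],\]
with $A_n(X):=\{H_n=\tau_k\text{ for some }k\geq1,\beta=\infty\}$, one integrates over $\omega$ first. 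Since $\mathbb{P}$ is i.i.d., the resulting weight factorizes over the union of sites visited by the two paths, with a nontrivial interaction factor at each site visited by both walks. By uniform ellipticity \eqref{ellipticity}, this interaction factor is sandwiched between constants depending only on $\delta$.

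\emph{Joint regenerations and per-slab bound.} I would introduce joint regeneration levels: integers $m\geq 0$ such that both walks have a $\beta=\infty$ regeneration at some point with $e_1$-coordinate equal to $m$. Between consecutive joint levels $m_{i-1}<m_i$, each walk stays in the slab $\{m_{i-1}\leq\langle\cdot,e_1\rangle<m_i\}$, so different slabs are spatially disjoint and the environment-averaged weight factorizes over joint slabs. Each per-slab factor, obtained by integrating against the joint distribution of the walks' internal increments, is bounded by a finite constant close to the corresponding one-walk quantity when $|\theta|$ is small, using the moment bounds of Corollary \ref{mazaltov} and the ellipticity-based bound on the interaction factor; Cauchy--Schwarz applied to the within-slab portion reduces the per-slab estimate to quantities already controlled for a single walk.

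\emph{Joint renewal and the role of $d\geq 4$.} The sum of products of per-slab factors is uniformly bounded provided the joint regeneration levels have positive density. This is handled by a renewal estimate for the two regeneration walks $(Y_k),(\tilde Y_l)$ under $\hat P_o^\theta\otimes\hat P_o^\theta$: they share the same drift in direction $e_1$, so their relative transverse displacement is a mean-zero walk in $d-1$ dimensions with bounded increments and (by \eqref{onedaylate}) finite moments. Provided $d-1\geq 3$, this transverse walk is transient, and a local limit theorem yields summable coincidence probabilities at each level, which translates into positive density of joint regeneration levels. The main obstacle will be making the joint renewal estimate quantitative and uniform in $\theta$ over a sufficiently small ball $\mathcal{C}_a(\kappa_{eq})$, so that the product of per-slab factors is simultaneously bounded and summable against the joint renewal density; this is exactly what pins down the choice of $\kappa_{eq}$ and is the step where the hypothesis $d\geq 4$ becomes indispensable, since in lower dimensions the two quenched walks could remain correlated for arbitrarily long stretches and defeat any factorization-based argument.
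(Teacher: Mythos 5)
Your outline follows the same broad route as the paper (two independent walks in a common environment, decoupling of the annealed pair measure on the non-intersection event, joint regeneration levels, a transverse local CLT that needs $d-1\geq 3$), but the pivotal step is misidentified, and as stated it would not close. You claim that ``the sum of products of per-slab factors is uniformly bounded provided the joint regeneration levels have positive density.'' Positive density of joint regeneration levels holds in every dimension $d\geq 2$ (Rassoul-Agha and Sepp\"al\"ainen) and cannot be the mechanism, since the conclusion fails for $d=1$ and is open for $d=2,3$. The number of slabs up to level $n$ grows linearly in $n$, so if each slab in which the walks interact contributes a factor even slightly larger than the corresponding product of one-walk factors, the product diverges exponentially. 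Likewise, bounding the per-site interaction factor by a constant depending on $\delta$ is useless on its own: the issue is not that each interaction is small but that there are few interactions. What is actually needed is a renewal inequality of the form $F_{n,z}\leq\sum_k B_{k,z}\sup_{z'}F_{n-k,z'}+C_{n,z}$, where $B_{k,z}(\theta)$ is the cost of an ``intersection epoch,'' together with the strict inequality $\sup_z\sum_k B_{k,z}(\theta)<1$. At $\theta=0$ this is exactly the statement that from any joint regeneration level at which the two walks occupy \emph{distinct} sites, they never intersect again with probability bounded below uniformly in the separation $z$ (Berger--Zeitouni, Proposition 3.4, which is the hard external input resting on Green's function estimates); your local-CLT/transience heuristic for the difference of the two $\hat P^\theta$-walks is circular at this point, because the two quenched walks only become independent random walks \emph{after} conditioning on non-intersection.

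Two further gaps. First, for $\theta\neq 0$ the inequality $B(\theta)<1$ does not follow by continuity alone: one must show $\sum_k\sup_z(B_{k,z}(\theta)-B_{k,z}(0))$ is small, which requires tail estimates for joint regeneration levels and for the first intersection level (exponential moments of $l^+$ and $l^*$), plus the summable $j^{-(d-1)/2}$ bound from the local CLT --- this is the second, independent place where $d\geq4$ enters, and it is where most of the technical work lies; your plan flags this as ``the main obstacle'' but offers no mechanism. Second, you take both walks started at the origin, but the non-intersection estimate is vacuous for coinciding starting points; one must work with $\mathbb{E}\{g_n(\theta,\cdot)\,g_n(\theta,T_z\cdot)\}$ for $z\neq0$ in the hyperplane, restrict the renewal to levels where $X_{H_n}\neq\tilde X_{\tilde H_n}$, and only at the end recover $\mathbb{E}\{g_n(\theta,\cdot)^2\}$ by a single ellipticity step.
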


\begin{remark}
Lemma \ref{L2lemma} is proved in Section \ref{L2section}.
\end{remark}

\begin{lemma}\label{forconone}
For every $\theta\in\mathcal{C}_a(\kappa_{eq})$,
\begin{equation}\label{notone}
\mathbb{P}\left\{\omega: \lim_{n\to\infty}g_n(\theta,\omega)=0\right\}<1.
\end{equation}
\end{lemma}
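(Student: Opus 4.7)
The plan is a short proof by contradiction leveraging the two preceding lemmas. Suppose for the sake of contradiction that $g_n(\theta,\cdot)\to 0$ holds $\mathbb{P}$-almost surely for some $\theta\in\mathcal{C}_a(\kappa_{eq})$. I would aim to derive a contradiction with the positivity assertion of Lemma \ref{L1lemma} via a standard uniform integrability argument.

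The key observation is that Lemma \ref{L2lemma} gives $\sup_{n\geq 1}\mathbb{E}\{g_n(\theta,\cdot)^2\}<\infty$, so $(g_n(\theta,\cdot))_{n\geq 1}$ is bounded in $L^2(\mathbb{P})$. Boundedness in $L^p$ for any $p>1$ implies uniform integrability in $L^1$, by the routine Cauchy–Schwarz plus Chebyshev estimate
$$\mathbb{E}\bigl[g_n(\theta,\cdot)\,\mathbf{1}_{\{g_n(\theta,\cdot)>M\}}\bigr]\leq \mathbb{E}\{g_n(\theta,\cdot)^2\}^{1/2}\,\mathbb{P}\bigl(g_n(\theta,\cdot)>M\bigr)^{1/2}\leq \frac{\sup_n\mathbb{E}\{g_n(\theta,\cdot)^2\}}{M}.$$
Combining uniform integrability with the hypothesized almost sure convergence to $0$, Vitali's convergence theorem then forces $\mathbb{E}\{g_n(\theta,\cdot)\}\to 0$ as $n\to\infty$. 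This directly contradicts Lemma \ref{L1lemma}, according to which this limit equals $P_o(\beta=\infty)/\hat{E}_o^\theta[\langle Y_1,e_1\rangle]$, a strictly positive quantity (the denominator is finite and positive by (\ref{onedaylate}) and the fact, noted in the proof of Lemma \ref{L1lemma}, that $\hat{P}_o^\theta(\langle Y_1,e_1\rangle=1)>0$). The contradiction shows that $\{\omega:g_n(\theta,\omega)\to 0\}$ cannot have full $\mathbb{P}$-measure, which is exactly (\ref{notone}).

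There is no substantive obstacle in carrying out this step, as the argument is essentially a textbook combination of almost sure convergence with $L^p$ boundedness. The nontrivial content has already been packaged into Lemma \ref{L1lemma} (whose proof invokes the renewal theorem) and Lemma \ref{L2lemma} (the $L^2$ estimate that constitutes the technical heart of the paper and whose proof is deferred to Section \ref{L2section}).
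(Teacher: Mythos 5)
Your argument is correct and is essentially identical to the paper's proof: both deduce uniform integrability of $(g_n(\theta,\cdot))_{n\geq1}$ from the $L^2$ bound of Lemma \ref{L2lemma}, and then note that almost sure convergence to $0$ would force $\mathbb{E}\{g_n(\theta,\cdot)\}\to0$, contradicting the strictly positive limit in Lemma \ref{L1lemma}.
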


\begin{proof}
Take any $\theta\in\mathcal{C}_a(\kappa_{eq})$. Note that $(g_n(\theta,\cdot))_{n\geq1}$ is uniformly integrable by Lemma \ref{L2lemma}.
If $g_n(\theta,\cdot)$ were to converge $\mathbb{P}$-a.s.\ to $0$ as $n\to\infty$, then $\lim_{n\to\infty}\mathbb{E}\left\{g_n(\theta,\cdot)\right\}=0$ would hold. However, this would contradict Lemma \ref{L1lemma}.
\end{proof}
%This follows easily from uniform integrability. Indeed, for every $n\geq1$, $\theta\in\mathcal{C}_a(\kappa_{eq})$ and $K\in\mathbb{R}^+$,
%$$\mathbb{E}\left\{g_n(\theta,\cdot)\right\}=\mathbb{E}\left\{g_n(\theta,\cdot),g_n(\theta,\cdot)\leq K\right\}+\mathbb{E}\left\{g_n(\theta,\cdot),g_n(\theta,\cdot)>K\right\}\leq\mathbb{E}\left\{g_n(\theta,\cdot)\wedge K\right\}+K^{-1}\mathbb{E}\left\{g_n(\theta,\cdot)^2\right\}.$$
%Suppose temporarily that (\ref{notone}) is false. Then,
%\begin{align*}
%\lim_{n\to\infty}\mathbb{E}\left\{g_n(\theta,\cdot)\right\}&\leq\lim_{n\to\infty}\mathbb{E}\left\{g_n(\theta,\cdot)\wedge K\right\} + K^{-1}\sup_{n\geq1}\mathbb{E}\left\{g_n(\theta,\cdot)^2\right\}\\
%&=\mathbb{E}\left\{\lim_{n\to\infty}g_n(\theta,\cdot)\wedge K\right\} + K^{-1}\sup_{n\geq1}\mathbb{E}\left\{g_n(\theta,\cdot)^2\right\}\\
%&=K^{-1}\sup_{n\geq1}\mathbb{E}\left\{g_n(\theta,\cdot)^2\right\}.
%\end{align*}
%Since $\sup_{n\geq1}\mathbb{E}\left\{g_n(\theta,\cdot)^2\right\}<\infty$ by Lemma \ref{L2lemma} and $K$ can be arbitrarily large, $\lim_{n\to\infty}\mathbb{E}\left\{g_n(\theta,\cdot)\right\}=0$. However, this contradicts Lemma \ref{L1lemma}.

\begin{lemma}\label{forcontwo}
For every $\theta\in\mathbb{R}^d$, $\epsilon>0$ and $\mathbb{P}$-a.e.\ $\omega$,
$$\lim_{n\to\infty}E_o^\omega\left[\exp\left\{\langle\theta,X_{H_n}\rangle-(\Lambda_q(\theta)+\epsilon)H_n\right\}\right]=0.$$
\end{lemma}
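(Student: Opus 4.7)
The plan is a straightforward first-moment estimate exploiting the quenched logarithmic moment generating function together with the fact that $H_n\geq n$ (which holds trivially because each step changes $\langle X_i,e_1\rangle$ by at most $1$, so reaching level $n$ takes at least $n$ steps). The hypotheses give us $\Lambda_q(\theta)=\lim_{m\to\infty}\frac1m\log E_o^\omega[\exp\{\langle\theta,X_m\rangle\}]$ for $\mathbb{P}$-a.e.\ $\omega$, as a deterministic quantity; this is what we will feed into the bound.

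First I would decompose according to the value of $H_n$. For any $\omega$,
\begin{align*}
E_o^\omega\left[\exp\{\langle\theta,X_{H_n}\rangle-(\Lambda_q(\theta)+\epsilon)H_n\}\right]
&=\sum_{m=n}^{\infty}E_o^\omega\left[\exp\{\langle\theta,X_m\rangle-(\Lambda_q(\theta)+\epsilon)m\},\,H_n=m\right]\\
&\leq\sum_{m=n}^{\infty}e^{-(\Lambda_q(\theta)+\epsilon)m}\,E_o^\omega\left[\exp\{\langle\theta,X_m\rangle\}\right],
\end{align*}
where in the last step I simply dropped the indicator $\{H_n=m\}$ and used that $H_n\geq n$ a.s.

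Next, fix an $\omega$ in the full $\mathbb{P}$-measure set on which $\frac1m\log E_o^\omega[\exp\{\langle\theta,X_m\rangle\}]\to\Lambda_q(\theta)$. Choose $N(\omega)\in\mathbb{N}$ such that for all $m\geq N(\omega)$ we have
\[
E_o^\omega\left[\exp\{\langle\theta,X_m\rangle\}\right]\leq\exp\{m(\Lambda_q(\theta)+\epsilon/2)\}.
\]
Then, whenever $n\geq N(\omega)$,
\[
E_o^\omega\left[\exp\{\langle\theta,X_{H_n}\rangle-(\Lambda_q(\theta)+\epsilon)H_n\}\right]
\leq\sum_{m=n}^{\infty}e^{-m\epsilon/2}=\frac{e^{-n\epsilon/2}}{1-e^{-\epsilon/2}},
\]
which vanishes as $n\to\infty$.

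There is no real obstacle: the only ingredients are the geometric constraint $H_n\geq n$, the $\mathbb{P}$-a.s.\ existence of $\Lambda_q(\theta)$ as a pointwise limit, and an exchange of a supremum-over-paths bound for the crude sum over $m$. The strict positivity of $\epsilon$ is what yields the summable geometric tail; one cannot weaken $\epsilon>0$ here since $\Lambda_q(\theta)$ is only a logarithmic growth rate, not a sharp upper bound for every $m$.
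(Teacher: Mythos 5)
Your proof is correct and follows essentially the same route as the paper's: decompose over the value of $H_n\geq n$, drop the indicator to bound by $E_o^\omega[\exp\{\langle\theta,X_m\rangle\}]\mathrm{e}^{-(\Lambda_q(\theta)+\epsilon)m}$, and use the $\mathbb{P}$-a.s.\ convergence defining $\Lambda_q(\theta)$ to get a summable geometric tail $\mathrm{e}^{-\epsilon m/2}$. No issues.
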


\begin{proof}
For every $n\geq1$, $\theta\in\mathbb{R}^d$, $\epsilon>0$ and $\mathbb{P}$-a.e.\ $\omega$,
\begin{align*}
E_o^\omega\left[\exp\left\{\langle\theta,X_{H_n}\rangle-(\Lambda_q(\theta)+\epsilon)H_n\right\}\right]&=\sum_{i=n}^\infty E_o^\omega\left[\exp\left\{\langle\theta,X_{H_n}\rangle-(\Lambda_q(\theta)+\epsilon)H_n\right\},H_n=i\right]\\
&\leq\sum_{i=n}^\infty E_o^\omega\left[\exp\left\{\langle\theta,X_i\rangle-(\Lambda_q(\theta)+\epsilon)i\right\}\right]=\sum_{i=n}^\infty\mathrm{e}^{o(i)-\epsilon i}\leq\sum_{i=n}^\infty\mathrm{e}^{-\epsilon i/2}
\end{align*} when $n$ is sufficiently large. Therefore, 
$$\limsup_{n\to\infty}E_o^\omega\left[\exp\left\{\langle\theta,X_{H_n}\rangle-(\Lambda_q(\theta)+\epsilon)H_n\right\}\right]\leq\limsup_{n\to\infty}\mathrm{e}^{-\epsilon n/2}\left(1-\mathrm{e}^{-\epsilon/2}\right)^{-1}=0.\qedhere$$
\end{proof}

\begin{lemma}\label{loglaresit}
$\Lambda_q(\theta)=\Lambda_a(\theta)$ for every $\theta\in\mathcal{C}_a(\kappa_{eq})$.
\end{lemma}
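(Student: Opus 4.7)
The plan is to combine the three preceding lemmas --- the $L^1$ nondegeneracy (Lemma \ref{L1lemma}, via Lemma \ref{forconone}), the $L^2$ bound (Lemma \ref{L2lemma}), and the quenched upper bound (Lemma \ref{forcontwo}) --- into a contradiction argument.

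First I would observe that the direction $\Lambda_a(\theta)\ge\Lambda_q(\theta)$ is essentially free: since $E_o[\exp\{\langle\theta,X_n\rangle\}]=\mathbb{E}[E_o^\omega[\exp\{\langle\theta,X_n\rangle\}]]$, Jensen's inequality gives
\[
\tfrac1n\log E_o[\exp\{\langle\theta,X_n\rangle\}]\ \geq\ \tfrac1n\,\mathbb{E}\bigl[\log E_o^\omega[\exp\{\langle\theta,X_n\rangle\}]\bigr],
\]
and the right side converges to $\Lambda_q(\theta)$ by bounded convergence (the integrand is bounded by $|\theta|$ uniformly in $\omega$ and $n$). So the content of the lemma is the reverse inequality $\Lambda_a(\theta)\le\Lambda_q(\theta)$ for $\theta\in\mathcal{C}_a(\kappa_{eq})$.

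For this, I would argue by contradiction: suppose some $\theta\in\mathcal{C}_a(\kappa_{eq})$ satisfies $\Lambda_a(\theta)>\Lambda_q(\theta)$, and pick $\epsilon>0$ small enough that $\Lambda_q(\theta)+\epsilon<\Lambda_a(\theta)$. Drop the two indicator events in the definition of $g_n(\theta,\omega)$ to get the crude bound
\[
g_n(\theta,\omega)\ \leq\ E_o^\omega\!\left[\exp\{\langle\theta,X_{H_n}\rangle-\Lambda_a(\theta)H_n\}\right].
\]
Because $H_n\geq 0$ and $-\Lambda_a(\theta)\le -(\Lambda_q(\theta)+\epsilon)$, the exponential on the right is dominated by $\exp\{\langle\theta,X_{H_n}\rangle-(\Lambda_q(\theta)+\epsilon)H_n\}$. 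Then Lemma \ref{forcontwo} applied with this $\epsilon$ gives $\lim_{n\to\infty}g_n(\theta,\omega)=0$ for $\mathbb{P}$-a.e.\ $\omega$.

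This conclusion directly contradicts Lemma \ref{forconone}, which asserts $\mathbb{P}\{\lim_n g_n(\theta,\cdot)=0\}<1$ on exactly the set $\mathcal{C}_a(\kappa_{eq})$. Hence the supposition $\Lambda_a(\theta)>\Lambda_q(\theta)$ is untenable, and the lemma follows. I do not expect any genuine obstacle here --- all the technical work (the uniform integrability powering Lemma \ref{forconone}, and the quenched decay of Lemma \ref{forcontwo}) has already been discharged; the only care needed is to keep track of the signs when comparing the two exponential rates and to note that the indicator events in $g_n$ can be harmlessly dropped to obtain an upper bound.
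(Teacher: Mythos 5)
Your proof is correct and is essentially identical to the paper's: the easy direction via Jensen and bounded convergence, and the reverse direction by comparing the $\Lambda_a(\theta)$-discounted and $(\Lambda_q(\theta)+\epsilon)$-discounted exponentials through the crude bound $g_n(\theta,\omega)\le E_o^\omega[\exp\{\langle\theta,X_{H_n}\rangle-\Lambda_a(\theta)H_n\}]$, then playing Lemma \ref{forcontwo} against Lemma \ref{forconone}. The paper phrases the second step as ``$\Lambda_q(\theta)+\epsilon>\Lambda_a(\theta)$ for every $\epsilon>0$'' rather than as a contradiction, but the content is the same.
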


\begin{proof}
For every $\theta\in\mathbb{R}^d$, it follows from Jensen's inequality and the bounded convergence theorem that
\begin{align*}
\Lambda_q(\theta)&=\mathbb{E}\left\{\lim_{n\to\infty}\frac{1}{n}\log E_o^\omega\left[\exp\{\langle\theta,X_n\rangle\}\right]\right\}=\lim_{n\to\infty}\frac{1}{n}\mathbb{E}\left\{\log E_o^\omega\left[\exp\{\langle\theta,X_n\rangle\}\right]\right\}\\
&\leq\lim_{n\to\infty}\frac{1}{n}\log E_o\left[\exp\{\langle\theta,X_n\rangle\}\right]=\Lambda_a(\theta).
\end{align*}

Let us now establish the reverse inequality. For every $\theta\in\mathcal{C}_a(\kappa_{eq})$ and $\epsilon>0$,
\begin{align*}
&\mathbb{P}\left\{\omega: \lim_{n\to\infty}E_o^\omega\left[\exp\left\{\langle\theta,X_{H_n}\rangle-\Lambda_a(\theta)H_n\right\}\right]=0\right\}<1\quad\mbox{and}\\
&\mathbb{P}\left\{\omega: \lim_{n\to\infty}E_o^\omega\left[\exp\left\{\langle\theta,X_{H_n}\rangle-(\Lambda_q(\theta)+\epsilon)H_n\right\}\right]=0\right\}=1
\end{align*}
by Lemmas \ref{forconone} and \ref{forcontwo}, respectively. Therefore, $\Lambda_q(\theta)+\epsilon>\Lambda_a(\theta)$. Since $\epsilon>0$ is arbitrary, we conclude that $\Lambda_q(\theta)\geq\Lambda_a(\theta)$ for every $\theta\in\mathcal{C}_a(\kappa_{eq})$. 
\end{proof}

\subsection{Equality of the rate functions}\label{guyaispat_iki}

Since $\Lambda_q=\Lambda_a$ on $\mathcal{C}_a(\kappa_{eq})$, it will follow from convex duality that $I_q(\xi)=I_a(\xi)$ for every $\xi\in\mathcal{D}$ that defines a supporting hyperplane of $\Lambda_a$ at some $\theta\in\mathcal{C}_a(\kappa_{eq})$. In order to show that the set of such $\xi$ satisfies the properties stated in Theorem \ref{QequalsA}, we need two preliminary lemmas.

\begin{lemma}\label{geldiiksh}
Assume that the walk is nestling. Define
\begin{equation}\label{nebiliyimiste}
\mathcal{C}_a^b(\kappa_{eq}):= \left\{\theta\in\partial\mathcal{C}_a(\kappa_{eq}):|\theta|<\kappa_{eq}\right\}.
\end{equation}
\begin{itemize}
\item[(a)] If $|\theta|<\kappa_{eq}$, then $\theta\not\in\mathcal{C}_a(\kappa_{eq})$ if and only if $E_o\left[\left.\exp\{\langle\theta,X_{\tau_1}\rangle\}\right|\beta=\infty\right]\leq1$.
\item[(b)] If $|\theta|<\kappa_{eq}$, then $\theta\in\mathcal{C}_a^b(\kappa_{eq})$ if and only if $E_o\left[\left.\exp\{\langle\theta,X_{\tau_1}\rangle\}\right|\beta=\infty\right]=1$. 
\end{itemize}
\end{lemma}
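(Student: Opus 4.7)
The plan is to introduce
$$\phi(\theta) := E_o[\exp\{\langle\theta,X_{\tau_1}\rangle\}|\beta=\infty], \qquad \psi(\theta,s) := E_o[\exp\{\langle\theta,X_{\tau_1}\rangle - s\tau_1\}|\beta=\infty],$$
and read both (a) and (b) off the behaviour of $\phi$ and $\psi$. By Corollary \ref{mazaltov}(c) and dominated convergence, $\phi$ is jointly continuous on $\{|\theta|<\kappa_3\}$ and $\psi$ on $\{|\theta|<\kappa_3\}\times[0,\infty)$. Since $\tau_1\geq 1$ a.s., $s\mapsto\psi(\theta,s)$ is strictly decreasing from $\phi(\theta)$ down to $0$, so $\psi(\theta,s)=1$ admits a unique positive root $s^*(\theta)$ iff $\phi(\theta)>1$.

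For part (a), the nestling hypothesis gives $\Lambda_a\geq 0$, so the statement reduces to $\Lambda_a(\theta)>0\iff\phi(\theta)>1$ on $\{|\theta|<\kappa_{eq}\}$. The easy implication $\Lambda_a(\theta)>0\Rightarrow\phi(\theta)>1$ follows from Lemma \ref{phillysh_one} (applicable since $\mathcal{C}_a(\kappa_{eq})\subset\mathcal{C}_a(\kappa_3)$): it gives $\psi(\theta,\Lambda_a(\theta))=1$, and strict monotonicity in $s$ then forces $\phi(\theta)=\psi(\theta,0)>1$. For the converse my plan is a renewal lower bound on the generating function: bounding $E_o[\exp\{\langle\theta,X_n\rangle\}]$ below by the contribution from the disjoint events $\{n=\tau_k,\beta=\infty\}_{k\geq 1}$ and using the i.i.d.\ structure of the increments $(X_{\tau_j}-X_{\tau_{j-1}},\tau_j-\tau_{j-1})$ under $P_o(\cdot|\beta=\infty)$ to factorize
$$E_o[\exp\{\langle\theta,X_{\tau_k}\rangle - s\tau_k\}|\beta=\infty]=\psi(\theta,s)^k,$$
a geometric-series calculation yields
$$\sum_{n\geq 0}e^{-sn}E_o[\exp\{\langle\theta,X_n\rangle\}]\geq P_o(\beta=\infty)\sum_{k\geq 1}\psi(\theta,s)^k.$$
For any $s\in(0,s^*(\theta))$ the right-hand side is infinite, whereas the left-hand side converges whenever $s>\Lambda_a(\theta)$; hence $\Lambda_a(\theta)\geq s$, and letting $s\uparrow s^*(\theta)>0$ gives $\Lambda_a(\theta)>0$, so $\theta\in\mathcal{C}_a(\kappa_{eq})$.

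Part (b) will follow from (a) combined with continuity and convexity. For $\mathcal{C}_a^b(\kappa_{eq})\subset\{\phi=1\}$: given $\theta\in\mathcal{C}_a^b(\kappa_{eq})$, continuity of $\Lambda_a$ gives $\Lambda_a(\theta)=0$ and a sequence $\theta_n\in\mathcal{C}_a(\kappa_{eq})$ with $\theta_n\to\theta$ and $\Lambda_a(\theta_n)\to 0$; applying Lemma \ref{phillysh_one} along the sequence and passing to the limit in $\psi(\theta_n,\Lambda_a(\theta_n))=1$ yields $\phi(\theta)=\psi(\theta,0)=1$. For the reverse inclusion, if $\phi(\theta)=1$ with $|\theta|<\kappa_{eq}$ then $\theta\notin\mathcal{C}_a(\kappa_{eq})$ by (a); moreover $\phi$ is convex and non-constant in any neighbourhood of $\theta$ (since $X_{\tau_1}$ is not a.s.\ deterministic under $P_o(\cdot|\beta=\infty)$ by uniform ellipticity, and a local maximum of a convex function would force local constancy), so there are $\theta_n\to\theta$ with $\phi(\theta_n)>1$; these lie in $\mathcal{C}_a(\kappa_{eq})$ by (a), placing $\theta\in\partial\mathcal{C}_a(\kappa_{eq})$.

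The hard part will be the renewal lower bound in (a): verifying the disjoint decomposition, the i.i.d.\ factorization, and the passage from divergence of $\sum_n e^{-sn}E_o[\exp\{\langle\theta,X_n\rangle\}]$ to $\Lambda_a(\theta)\geq s$. Everything else is routine continuity/convexity.
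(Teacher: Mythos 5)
Your proof is correct, but it is worth noting that the paper does not actually prove Lemma \ref{geldiiksh} internally: it simply cites Lemma 13 of \cite{YilmazAveraged}, so your argument is a self-contained replacement rather than a variant of an in-text proof. Your route is sound: the implication $\Lambda_a(\theta)>0\Rightarrow\phi(\theta)>1$ via Lemma \ref{phillysh_one} and strict monotonicity of $s\mapsto\psi(\theta,s)$ is exactly right; the converse via the renewal lower bound $\sum_n \mathrm{e}^{-sn}E_o[\exp\{\langle\theta,X_n\rangle\}]\geq P_o(\beta=\infty)\sum_k\psi(\theta,s)^k$ is valid (the factorization $E_o[\exp\{\langle\theta,X_{\tau_k}\rangle-s\tau_k\}|\beta=\infty]=\psi(\theta,s)^k$ is the same renewal structure the paper uses in Lemma \ref{L1lemma}, and the passage from divergence of the generating series at $s<s^*(\theta)$ to $\Lambda_a(\theta)\geq s$ is immediate from $E_o[\exp\{\langle\theta,X_n\rangle\}]=\mathrm{e}^{n\Lambda_a(\theta)+o(n)}$); and part (b) follows as you say, with the local-maximum-forces-local-constancy argument for convex functions closed off by the observation that $\langle X_{\tau_1},e_1\rangle\geq1$ a.s.\ makes $\phi$ strictly convex along the $e_1$-direction, so $\phi$ cannot be locally constant. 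Two small points you should make explicit in a final write-up: the inequality $\Lambda_a\geq0$ in the nestling case is not automatic from the definition but follows from $I_a(0)=0$ (Theorem \ref{previousqual}(c)) via $\Lambda_a=I_a^*$; and openness of $\mathcal{C}_a(\kappa_{eq})$ (needed to conclude $\theta\in\partial\mathcal{C}_a(\kappa_{eq})\Rightarrow\theta\notin\mathcal{C}_a(\kappa_{eq})$ in part (b)) should be recorded as a consequence of the continuity of the finite convex function $\Lambda_a$. With those remarks added, your argument effectively reconstructs the content of the external Lemmas 12--13 of \cite{YilmazAveraged} from first principles, which is a legitimate and arguably more transparent alternative to the citation.
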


\begin{proof}
This is Lemma 13 of \cite{YilmazAveraged}.
\end{proof}

\begin{lemma}\label{Caveraged}
$\Lambda_a$ is analytic on $\mathcal{C}_a(\kappa_{eq})$. Its gradient $\nabla\Lambda_a$ extends smoothly to $\overline{\mathcal{C}_a(\kappa_{eq})}$, the closure of $\mathcal{C}_a(\kappa_{eq})$. Moreover, the extension of the Hessian $\mathcal{H}_a$ of $\Lambda_a$ is positive definite on $\overline{\mathcal{C}_a(\kappa_{eq})}$.
\end{lemma}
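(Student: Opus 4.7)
The plan is to view $\Lambda_a$ as being defined implicitly through Lemma \ref{phillysh_one}. Setting
$$F(\theta,\lambda):=E_o\bigl[\exp\{\langle\theta,X_{\tau_1}\rangle-\lambda\tau_1\}\bigm|\beta=\infty\bigr],$$
the identity $F(\theta,\Lambda_a(\theta))=1$ holds throughout $\mathcal{C}_a(\kappa_3)$. All three conclusions --- analyticity on $\mathcal{C}_a(\kappa_{eq})$, smooth extension of $\nabla\Lambda_a$ to the closure, and positive definiteness of the extended Hessian --- will follow from this identity via the implicit function theorem and repeated differentiation under the expectation, using the moment bounds of Corollary \ref{mazaltov}.

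\emph{Step 1 (analyticity on the interior).} I would show that $F$ extends jointly analytically to a complex neighbourhood of every point $(\theta_0,\Lambda_a(\theta_0))$ with $\theta_0\in\mathcal{C}_a(\kappa_{eq})$. In the non-nestling case, $|X_{\tau_1}|\le\tau_1$ together with $E_o[e^{2\kappa_3\tau_1}|\beta=\infty]<\infty$ (Corollary \ref{mazaltov}(b)) dominates the integrand by $e^{2\kappa_3\tau_1}$ whenever $|\mathrm{Re}(\theta)|$ and $|\mathrm{Re}(\lambda)|$ sit strictly below $\kappa_3$. In the nestling case, $\Lambda_a(\theta_0)>0$, so I restrict to $\mathrm{Re}(\lambda)>\Lambda_a(\theta_0)/2$; then $|e^{-\lambda\tau_1}|$ decays exponentially and Corollary \ref{mazaltov}(c) controls $|e^{\langle\theta,X_{\tau_1}\rangle}|$ for $|\theta|$ slightly below $\kappa_3$. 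Since $\partial_\lambda F(\theta_0,\Lambda_a(\theta_0))=-\hat{E}_o^{\theta_0}[\tau_1]<0$, the analytic implicit function theorem delivers the analyticity of $\Lambda_a$ on $\mathcal{C}_a(\kappa_{eq})$.

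\emph{Step 2 ($C^\infty$ extension of $\nabla\Lambda_a$).} Implicit differentiation of $F(\theta,\Lambda_a(\theta))=1$ gives
$$\nabla\Lambda_a(\theta)=\frac{\hat{E}_o^\theta[X_{\tau_1}]}{\hat{E}_o^\theta[\tau_1]},$$
and every higher partial of $\Lambda_a$ is a rational expression in the mixed moments $\hat{E}_o^\theta[|X_{\tau_1}|^j\tau_1^k]$. The task is to verify that each such moment is finite and smooth in $\theta$ throughout $\overline{\mathcal{C}_a(\kappa_{eq})}$. On the closure, $\Lambda_a(\theta)\ge 0$ by continuity, so $e^{-\Lambda_a(\theta)\tau_1}\le 1$; Hölder's inequality with $|\theta|\le\kappa_{eq}<\kappa_3$ then factors $E_o\!\left[|X_{\tau_1}|^j\tau_1^k e^{\langle\theta,X_{\tau_1}\rangle}\bigm|\beta=\infty\right]$ into a polynomial $\tau_1$-moment (finite by Lemma \ref{madabir}(a) and Corollary \ref{mazaltov}(a)) times an exponential $|X_{\tau_1}|$-moment (finite by Corollary \ref{mazaltov}(c)). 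Each differentiation under the expectation in $\theta_i$ introduces an extra factor of $X_{\tau_1,i}-\partial_{\theta_i}\Lambda_a(\theta)\tau_1$, which the same Hölder split absorbs; iterating yields smooth dependence of all mixed moments on $\theta$ up to the boundary.

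\emph{Step 3 (positive definiteness of the Hessian).} Differentiating $\log F(\theta,\Lambda_a(\theta))=0$ twice and rearranging gives
$$\langle u,\mathcal{H}_a(\theta)u\rangle=\frac{1}{\hat{E}_o^\theta[\tau_1]}\,\mathrm{Var}_{\hat{P}_o^\theta}\!\bigl(\langle u,X_{\tau_1}\rangle-\langle u,\nabla\Lambda_a(\theta)\rangle\tau_1\bigr)$$
for every $u\in\mathbb{R}^d$, a formula that is manifestly non-negative and, thanks to Step 2 together with Lemma \ref{geldiiksh}(b) (which keeps $\hat{P}_o^\theta$ a genuine probability measure on the nestling boundary piece $\Lambda_a=0$), extends continuously to $\overline{\mathcal{C}_a(\kappa_{eq})}$. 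Positive-definiteness reduces to showing that $\langle u,X_{\tau_1}\rangle-c\tau_1$ is not $\hat{P}_o^\theta$-a.s.\ constant for any $u\ne 0$; uniform ellipticity (\ref{ellipticity}) provides regeneration excursions of qualitatively different shapes (e.g.\ a single $e_1$-step versus a longer excursion with a nontrivial transverse component) whose $(X_{\tau_1},\tau_1)$-values cannot all lie on a single affine hyperplane unless $u=0$.

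The main obstacle is Step 2 on the nestling boundary $\{\Lambda_a=0\}$: there $\tau_1$ admits no exponential $P_o$-moment and $F(\theta,\cdot)$ cannot be analytically continued across $\lambda=0$, so one cannot simply enlarge the open set on which Step 1 runs. The Hölder split above is the workaround; it requires the slack $\kappa_{eq}<\kappa_3$ and trades the missing exponential $\tau_1$-moment for the polynomial $\tau_1$-moments of Lemma \ref{madabir}(a) combined with the exponential $|X_{\tau_1}|$-moment of Corollary \ref{mazaltov}(c).
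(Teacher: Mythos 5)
Your proposal is correct and is essentially the argument that the paper delegates to Lemma 6 of \cite{YilmazAveraged}: there, too, one applies the (analytic) implicit function theorem to the regeneration identity of Lemma \ref{phillysh_one}, obtains $\nabla\Lambda_a$ and its higher derivatives as ratios of tilted moments of $(X_{\tau_1},\tau_1)$ whose continuity up to the nestling boundary $\{\Lambda_a=0\}$ is secured by exactly the H\"older trade-off you describe (polynomial $\tau_1$-moments against the exponential $|X_{\tau_1}|$-moment of Corollary \ref{mazaltov}), and the Hessian is shown positive definite via the variance representation together with ellipticity-generated excursions of distinct shapes. The only cosmetic caveats are that ``extends smoothly to $\overline{\mathcal{C}_a(\kappa_{eq})}$'' should be read in the Whitney sense (all derivatives computed on the open set extend continuously to the closure, since $F(\theta,\cdot)$ is genuinely not analytic across $\lambda=0$), and that $\hat{E}_o^\theta[\tau_1]$ is an abuse of the paper's notation for $E_o[\tau_1\exp\{\langle\theta,X_{\tau_1}\rangle-\Lambda_a(\theta)\tau_1\}\,|\,\beta=\infty]$.
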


\begin{proof}
This follows immediately from the proof of Lemma 6 of \cite{YilmazAveraged}.
\end{proof}

\begin{proof}[Proof of Theorem \ref{QequalsA}]
\quad
\vspace{0.3cm}

\noindent\textit{(a) The non-nestling case:} Recall that $\Lambda_a$ is analytic on $\mathcal{C}_a(\kappa_{eq})$. Define $\mathcal{A}_{eq}:=\left\{\nabla\Lambda_a(\theta): \theta\in\mathcal{C}_a(\kappa_{eq})\right\}$.
$\nabla\Lambda_a: \mathcal{C}_a(\kappa_{eq})\to\mathcal{A}_{eq}$ is invertible since the Hessian $\mathcal{H}_a$ of $\Lambda_a$ is positive definite on $\mathcal{C}_a(\kappa_{eq})$. The inverse, denoted by $\Gamma_a: \mathcal{A}_{eq}\to\mathcal{C}_a(\kappa_{eq})$, is analytic by the inverse function theorem (cf.\ Theorem 6.1.2 of \cite{Krantz}), and $\mathcal{A}_{eq}$ is open.

For every $\xi\in\mathcal{A}_{eq}$,
\begin{equation}\label{sabretgenco}
I_a(\xi)=\sup_{\theta\in\mathbb{R}^d}\left\{\langle\theta,\xi\rangle-\Lambda_a(\theta)\right\}=\langle\Gamma_a(\xi),\xi\rangle-\Lambda_a(\Gamma_a(\xi)).
\end{equation} Thus, $I_a$ is analytic on $\mathcal{A}_{eq}$. Differentiating (\ref{sabretgenco}) twice with respect to $\xi$ shows that the Hessian of $I_a$ at $\xi$ is equal to $\mathcal{H}_a(\Gamma_a(\xi))^{-1}$, a positive definite matrix. Therefore, $I_a$ is strictly convex on $\mathcal{A}_{eq}$.

It is shown in \cite{SznitmanZerner} that $\xi_o={\left({E_o\left[\left.X_{\tau_1}\right|\beta=\infty\right]}\right)}/{\left({E_o\left[\left.\tau_1\right|\beta=\infty\right]}\right)}$. Since $0\in\mathcal{C}_a(\kappa_{eq})$, it follows that $\xi_o=\nabla\Lambda_a(0)\in\mathcal{A}_{eq}$.

$\Lambda_q=\Lambda_a$ on $\mathcal{C}_a(\kappa_{eq})$ by Lemma \ref{loglaresit}. For every $\xi\in\mathcal{A}_{eq}$,
\begin{equation*}%\label{isteformul}
I_q(\xi)=\sup_{\theta\in\mathbb{R}^d}\left\{\langle\theta,\xi\rangle-\Lambda_q(\theta)\right\}=\langle\Gamma_a(\xi),\xi\rangle-\Lambda_a(\Gamma_a(\xi))=I_a(\xi).
\end{equation*} 
\vspace{0.2cm}

\noindent\textit{(b) The nestling case:} Recall that $\nabla\Lambda_a$ extends smoothly to $\overline{\mathcal{C}_a(\kappa_{eq})}$. Refer to the extension by $\overline{\nabla\Lambda_a}$.
Define $\mathcal{A}_{eq}^+:=\left\{\nabla\Lambda_a(\theta): \theta\in\mathcal{C}_a(\kappa_{eq})\right\}$ and $\mathcal{A}_{eq}^b:=\left\{\overline{\nabla\Lambda_a}(\theta): \theta\in\mathcal{C}_a^b(\kappa_{eq})\right\}$ with $\mathcal{C}_a^b(\kappa_{eq})$ as in (\ref{nebiliyimiste}).
Note that $0\in\mathcal{C}_a^b(\kappa_{eq})\subset\partial\mathcal{C}_a(\kappa_{eq})$ by Lemma \ref{geldiiksh}, and $\xi_o=\overline{\nabla\Lambda_a}(0)\in\mathcal{A}_{eq}^b\subset\partial\mathcal{A}_{eq}^+$.

Similar to the non-nestling case, $I_a$ is strictly convex and analytic on $\mathcal{A}_{eq}^+$ which is an open set, and $I_q(\xi)=I_a(\xi)$ for every $\xi\in\mathcal{A}_{eq}^+$. Moreover, $\mathcal{A}_{eq}^b$ is a $(d-1)$-dimensional smooth surface patch and item (iii) is satisfied. (The latter facts follow from part (d) of Theorem \ref{previousqual} since $\mathcal{A}_{eq}^b\subset\mathcal{A}_a^b$, cf.\ \cite{YilmazAveraged}.)

%The map $\theta\mapsto\psi(\theta,0)=E_o\left[\left.\exp\{\langle\theta,X_{\tau_1}\rangle\}\right|\beta=\infty\right]$ is analytic on $\{\theta: |\theta|<\kappa_{eq}\}$. For every $\theta\in\mathcal{C}_a^b(\kappa_{eq})$,
%$$\langle\nabla_\theta\psi(\theta,0),e_1\rangle=E_o\left[\left.\langle X_{\tau_1},e_1\rangle\exp\{\langle\theta,X_{\tau_1}\rangle\}\right|\beta=\infty\right]\geq E_o\left[\left.\exp\{\langle\theta,X_{\tau_1}\rangle\}\right|\beta=\infty\right]=1.$$
%Lemma \ref{geldiiksh} and the implicit function theorem imply that $\mathcal{C}_a^b(\kappa_{eq})$ is the graph of an analytic function. Therefore, $\mathcal{A}_{eq}^b$ is a $(d-1)$-dimensional smooth surface patch. Note that $$\left.\nabla_\theta\psi(\theta,0)\right|_{\theta=0}=E_o\left[\left.X_{\tau_1}\right|\beta=\infty\right]=E_o\left[\left.\tau_1\right|\beta=\infty\right]\xi_o$$ is normal to $\mathcal{C}_a^b(\kappa_{eq})$ at $0$. Refer to the extension of $\mathcal{H}_a$ to $\overline{\mathcal{C}_a(\kappa_{eq})}$ by $\overline{\mathcal{H}_a}$. The unit vector $\eta_o$ normal to $\mathcal{A}_{eq}^b$ (and pointing in $\mathcal{A}_{eq}^+$) at $\xi_o$ is $c\overline{\mathcal{H}_a}(0)\xi_o$ for some $c>0$ by the chain rule, and satisfies $$\langle\eta_o,\xi_o\rangle=c\langle\xi_o,\overline{\mathcal{H}_a}(0)\xi_o\rangle>0.$$ %by (\ref{hessiansh}).

It remains to show that $I_q(t\xi)=tI_q(\xi)=tI_a(\xi)=I_a(t\xi)$ for every $\xi\in\mathcal{A}_{eq}^b$ and $t\in[0,1]$. The rest of this proof focuses on this statement.

For every $\xi\in\mathcal{A}_{eq}^b$, there exists a $\theta\in\mathcal{C}_a^b(\kappa_{eq})$ such that $\xi=\overline{\nabla\Lambda_a}(\theta)$ and
\begin{equation}\label{yukarikigenc}
\langle\xi,e_1\rangle=\langle\overline{\nabla\Lambda_a}(\theta),e_1\rangle=\frac{E_o\left[\left.\langle X_{\tau_1},e_1\rangle\exp\{\langle\theta,X_{\tau_1}\rangle\}\right|\beta=\infty\right]}{E_o\left[\left.\tau_1\exp\{\langle\theta,X_{\tau_1}\rangle\}\right|\beta=\infty\right]}>0.
\end{equation}
Suppose $\xi=\overline{\nabla\Lambda_a}(\theta')$ for some $\theta'\in\mathcal{C}_a^b(\kappa_{eq})$ such that $\theta\neq\theta'$. Then, for every $t\in(0,1)$, $\xi$ defines a supporting hyperplane of $\Lambda_a$ at $\theta_t:=t\theta+(1-t)\theta'$. Recall Lemma \ref{geldiiksh}. $E_o[\left.\exp\{\langle\theta_t,X_{\tau_1}\rangle\}\right|\beta=\infty]<1$ by Jensen's inequality, and $\theta_t$ is an interior point of $\mathcal{C}_a(\kappa_{eq})^c$. Therefore, $\nabla\Lambda_a(\theta_t)=0$ since $\Lambda_a$ is identically equal to zero on $\{\theta: |\theta|<\kappa_{eq}\}\setminus\mathcal{C}_a(\kappa_{eq})$. However, this contradicts (\ref{yukarikigenc}). We conclude that there exists a \textit{unique} $\theta\in\mathcal{C}_a^b(\kappa_{eq})$ such that $\xi=\overline{\nabla\Lambda_a}(\theta)$. Denote the inverse of $\overline{\nabla\Lambda_a}$ by $\overline{\Gamma_a}$.

For every $\xi\in\mathcal{A}_{eq}^b$ and $t\in[0,1]$, $\exists\,\theta_n\in\mathcal{C}_a(\kappa_{eq})$ such that $\theta_n\to\overline{\Gamma_a}(\xi)$ and $\xi_n:=\nabla\Lambda_a(\theta_n)\to\xi$ as $n\to\infty$. Note that $\Lambda_a(\overline{\Gamma_a}(\xi))=0$ since $\overline{\Gamma_a}(\xi)\in\mathcal{C}_a^b(\kappa_{eq})$. By the continuity of $I_a$ and $\Lambda_a$,
\begin{align*}
I_a(\xi)&=\lim_{n\to\infty}I_a(\xi_n)=\lim_{n\to\infty}\langle\theta_n,\xi_n\rangle - \Lambda_a(\theta_n)=\langle\overline{\Gamma_a}(\xi),\xi\rangle -  \Lambda_a(\overline{\Gamma_a}(\xi)) = \langle\overline{\Gamma_a}(\xi),\xi\rangle\quad\mbox{and}\\
I_a(t\xi) &= \sup_{\theta\in\mathbb{R}^d}\left\{\langle\theta,t\xi\rangle - \Lambda_a(\theta)\right\}\geq\langle\overline{\Gamma_a}(\xi),t\xi\rangle - \Lambda_a(\overline{\Gamma_a}(\xi))=t\langle\overline{\Gamma_a}(\xi),\xi\rangle=tI_a(\xi).
\end{align*}
Conversely, $I_a(t\xi)\leq tI_a(\xi)+(1-t)I_a(0)=tI_a(\xi)$ by Jensen's inequality (and the fact that $I_a(0)=0$, cf.\ Theorem \ref{previousqual}). Hence, $I_a(t\xi)=tI_a(\xi)$.

The continuity of the rate functions implies that $I_q=I_a$ on $\mathcal{A}_{eq}^b$. Recall that $I_q(0)=0$, cf.\ Theorem \ref{previousqual}. Since the averaged rate function is always less than or equal to the quenched rate function, we conclude that
$$I_q(t\xi)\leq tI_q(\xi)+(1-t)I_q(0)=tI_q(\xi)=tI_a(\xi)=I_a(t\xi)\leq I_q(t\xi).\qedhere$$
%Here, the last inequality holds since the averaged rate function is always less than or equal to the quenched rate function.

%$\Lambda_q=\Lambda_a$ on $\mathcal{C}_a(\kappa_{eq})$ by Lemma \ref{loglaresit}, and $I_q(0)=0$, cf.\ Theorem \ref{previousqual}. Therefore,
%$$I_q(t\xi)\leq tI_q(\xi)+(1-t)I_q(0)=tI_q(\xi)=t\sup_{\theta\in\mathbb{R}^d}\left\{\langle\theta,\xi\rangle-\Lambda_q(\theta)\right\}=t\langle\overline{\Gamma_a}(\xi),\xi\rangle=tI_a(\xi)=I_a(t\xi)\leq I_q(t\xi).\qedhere$$
\end{proof}

\begin{remark}
The argument above, due to its structure, not only proves Theorem \ref{QequalsA}, but also reproduces some of the proofs of the statements in Theorem \ref{previousqual} that are given in \cite{YilmazAveraged}. Moreover, it provides a new and concise proof of item (v) of part (d) of Theorem \ref{previousqual} which is originally obtained in \cite{Jon}.
\end{remark}

\section{The $L^2$ estimate}\label{L2section}

In our proof of Theorem \ref{QequalsA} given in Section \ref{turetmis}, we assumed Lemma \ref{L2lemma}. In this section, we will verify this assumption. The following fact will play a central role in our argument: \textit{if the dimension is at least four, then, with positive averaged probability, the paths of two independent ballistic walks in the same environment do not intersect.}

\subsection{Some preliminaries regarding two walks}

Assume $d\geq4$, (\ref{ellipticity}), (\ref{i.i.d.}) and (\textbf{T},$\hat{u}$) for some $\hat{u}\in\mathcal{S}^{d-1}$.
Also, like in Subsections \ref{guyaispat_bir} and \ref{guyaispat_iki}, assume WLOG that $\langle\xi_o,e_1\rangle>0$.

For every $x$ and $\tilde{x}\in\mathbb{Z}^d$, consider two independent walks $X=X(x):=(X_i)_{i\geq0}$ and $\tilde{X}=\tilde{X}(\tilde{x}):=(\tilde{X}_j)_{j\geq0}$ starting at $x$ and $\tilde{x}$ in the same environment. Denote their joint quenched law and joint averaged law by $P_{x,\tilde{x}}^\omega:=P_x^\omega\!\times\!P_{\tilde{x}}^\omega$ and $P_{x,\tilde{x}}(\cdot):=\mathbb{E}\{P_{x,\tilde{x}}^\omega(\cdot)\}$. As usual, $E_{x,\tilde{x}}^\omega$ and $E_{x,\tilde{x}}$ refer to expectations under $P_{x,\tilde{x}}^\omega$ and $P_{x,\tilde{x}}$, respectively.

Clearly, $P_{x,\tilde{x}}\neq P_x\!\times\!P_{\tilde{x}}$. On the other hand, the two walks don't know that they are in the same environment unless their paths intersect. In particular, for any event $A$ involving $X$ and $\tilde{X}$,
\begin{equation}\label{cokkibar}
P_{x,\tilde{x}}(A\cap\{\nu_1=\infty\})=P_x\!\times\!P_{\tilde{x}}(A\cap\{\nu_1=\infty\})
\end{equation} where 
\begin{equation}\label{hatirlatmak}
\nu_1:=\inf\{m\in\mathbb{Z}:X_i=\tilde{X}_j\mbox{ for some }i\geq0,j\geq0,\mbox{ and }\langle X_i,e_1\rangle=m\}.
\end{equation}

Similar to the random times $(\tau_m)_{m\geq1}=(\tau_m(e_1))_{m\geq1},\ (H_n)_{n\geq0}=(H_n(e_1))_{n\geq0}\mbox{ and }\beta=\beta(e_1)$ defined in (\ref{regenerationtimes_m}) and (\ref{randomtimes}) for $X$, consider $(\tilde{\tau}_m)_{m\geq1}=(\tilde{\tau}_m(e_1))_{m\geq1},\ (\tilde{H}_n)_{n\geq0}=(\tilde{H}_n(e_1))_{n\geq0}\mbox{ and }\tilde{\beta}=\tilde{\beta}(e_1)$ for $\tilde{X}$. In our proof of Lemma \ref{L2lemma}, we will make use of the \textit{joint regeneration levels} of $X$ and $\tilde{X}$, which are elements of 
$$\mathcal{L}:=\{n\geq0: \langle X_i,e_1\rangle\geq n\mbox{ and }\langle\tilde{X}_j,e_1\rangle\geq n\mbox{ for every }i\geq H_n\mbox{ and }j\geq\tilde{H}_n\}.$$
This random set has been previously introduced and studied by Rassoul-Agha and Sepp\"al\"ainen \cite{TimoFirasCLT}. Note that if the starting points $x$ and $\tilde{x}$ are both in $\mathbb{V}_d:=\left\{z\in\mathbb{Z}^d: \langle z,e_1\rangle=0\right\}$, then $$0\in\mathcal{L}\iff\beta=\tilde{\beta}=\infty\iff l_1:=\inf\mathcal{L}=0.$$

Let $\mathbb{V}_d':=\mathbb{V}_d\setminus\{0\}$. As mentioned in the opening paragraph of this section, the following lemma is central to our proof of Lemma \ref{L2lemma}.

\begin{lemma}[Berger and Zeitouni \cite{NoamOfer}, Proposition 3.4]\label{nonintersectlemma}
$$\inf_{z\in\mathbb{V}_d'}P_{o,z}(l_1=0)\geq\inf_{z\in\mathbb{V}_d'}P_{o,z}(\nu_1=\infty, l_1=0)>0.$$
\end{lemma}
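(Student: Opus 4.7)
The first inequality is immediate from $\{\nu_1=\infty,\,l_1=0\}\subset\{l_1=0\}$, so the content is strict positivity of the infimum. My plan is to decouple via (\ref{cokkibar}) into a non-intersection problem for two \emph{independent} non-backtracking ballistic walks, and then use the fact that $d\geq 4$ is the borderline dimension for mutual avoidance of such walks.

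Since $o,z\in\mathbb{V}_d$ forces $H_0=\tilde{H}_0=0$, one has $\{l_1=0\}=\{\beta=\tilde\beta=\infty\}$, and because $\beta,\tilde\beta$ are functions of $X,\tilde X$ separately, (\ref{cokkibar}) yields
\[
P_{o,z}(\nu_1=\infty,l_1=0)=P_o(\beta=\infty)\,P_z(\tilde\beta=\infty)\,Q_z(\nu_1=\infty),
\]
with $Q_z:=P_o(\cdot\mid\beta=\infty)\otimes P_z(\cdot\mid\tilde\beta=\infty)$. The two prefactors are equal by stationarity of $\mathbb{P}$ and positive by Lemma \ref{madabir}(a), reducing matters to $\inf_{z\in\mathbb{V}_d'}Q_z(\nu_1=\infty)>0$. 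Under $Q_z$ each walk has its full renewal structure: the skeleton $R_m:=X_{\tau_m}$ is a random walk with i.i.d.\ increments of all moments (Corollary \ref{mazaltov}(a)), strictly positive $e_1$-component, and mean a positive multiple of $\xi_o$, while between regenerations the walk performs i.i.d.\ slab excursions of exponentially decaying transversal diameter. A local CLT for the skeleton combined with these tail bounds supplies a Green-function estimate of Gaussian profile, from which, writing $z_\perp$ for the projection of $z$ onto the hyperplane orthogonal to $\xi_o$, one gets
\[
E_{Q_z}\#\{(i,j):X_i=\tilde{X}_j\}\;\lesssim\;\sum_{n\geq 1}\frac{1}{n^{(d-1)/2}}\exp\!\left(-c\frac{|z_\perp|^2}{n}\right)\;\leq\;C,
\]
with $C$ independent of $z$ precisely when $(d-1)/2>1$, i.e.\ $d\geq 4$. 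This is the drifted analog of the classical fact that two independent centered random walks in $\mathbb{Z}^{d-1}$ meet finitely often iff $d-1\geq 3$.

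The main obstacle is converting this first-moment bound into strict positivity of $Q_z(\nu_1=\infty)$, since the trivial estimate $Q_z(\nu_1=\infty)\geq 1-E_{Q_z}[\#\text{intersections}]$ is vacuous when $|z|$ is $O(1)$. The plan is a two-scale truncation: using ballisticity in the $e_1$-direction together with Gaussian transversal fluctuations, show that with probability bounded below uniformly in $z$, after $N$ regenerations the two walks' transversal positions differ by at least $c\sqrt{N}$; on this good event the Green-function bound applied to the shifted walks gives expected future intersections of order $N^{-(d-3)/2}$, which is $<1/2$ for $N$ sufficiently large. A conditional Chebyshev inequality at this scale, iterated via the renewal structure across future regeneration blocks (with a Borel--Cantelli-type closing argument), then produces the desired uniform positive lower bound. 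This is the substance of Proposition 3.4 of \cite{NoamOfer}.
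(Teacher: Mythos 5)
This lemma is not proved in the paper at all: it is imported verbatim from Berger and Zeitouni \cite{NoamOfer} (their Proposition 3.4), and the only in-paper commentary is the remark that the proof rests on Green's function estimates which fail unless $d\geq4$. So there is no internal argument to compare yours against; I can only assess your reconstruction on its own terms. Your reductions are correct: for starting points in $\mathbb{V}_d$ one has $\{l_1=0\}=\{\beta=\tilde\beta=\infty\}$, the decoupling identity (\ref{cokkibar}) applies with $A=\{l_1=0\}$ because $\beta$ and $\tilde\beta$ depend on $X$ and $\tilde X$ separately, and the factorization into $P_o(\beta=\infty)^2\,Q_z(\nu_1=\infty)$ is legitimate. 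The Green's function computation and the identification of $(d-1)/2>1$, i.e.\ $d\geq4$, as the threshold are exactly the mechanism the paper's remark alludes to.

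The gap is in your final paragraph, which is a plan rather than a proof, and it sits precisely where all the work is. Two points need care. First, your "good event" at scale $N$ (no intersection during the first $N$ regeneration blocks together with transversal separation $\gtrsim\sqrt N$) must be shown to have probability bounded below \emph{uniformly in $z$}, including $|z|=1$; as stated this is dangerously close to the quantity you are trying to bound. The standard fix is to force the separation by an explicit finite path prescription of probability at least $\delta^{CL}$ via (\ref{ellipticity}), using transversal steps so that neither walk drops below level $0$, rather than by a CLT event whose intersection-avoidance is itself in question. Second, a Borel--Cantelli closing argument only yields that the walks meet finitely often a.s.; upgrading this to $Q_z(\nu_1=\infty)>0$ requires passing to the first joint regeneration level after the last intersection and invoking the renewal structure of $\mathcal{L}$, and then ellipticity again to transfer positivity from some $z'$ back to all $z\in\mathbb{V}_d'$ uniformly. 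These steps are carried out in \cite{NoamOfer}; as written, your citation of that proposition is doing the real work, exactly as it does in the paper.
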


\noindent The proof of Lemma \ref{nonintersectlemma} is based on certain Green's function estimates which fail to hold unless $d\geq4$.

\subsection{A renewal argument}\label{genclik}

For every $n\geq1$, $\theta\in\mathcal{C}_a(\kappa_3)$, $z\in\mathbb{V}_d$ and $\omega\in\Omega$,
\begin{align*}
g_n(\theta,\omega)&=E_o^\omega\left[\exp\left\{\langle\theta,X_{H_n}\rangle-\Lambda_a(\theta)H_n\right\},H_n=\tau_k\ \mbox{for some }k\geq1, \beta=\infty\right]\quad\mbox{and}\\
g_n(\theta,T_z\omega)&=E_o^{T_z\omega}\left[\exp\left\{\langle\theta,X_{H_n}\rangle-\Lambda_a(\theta)H_n\right\},H_n=\tau_k\ \mbox{for some }k\geq1, \beta=\infty\right]\\
&=\mathrm{e}^{-\langle\theta,z\rangle}E_z^\omega\left[\exp\left\{\langle\theta,X_{H_n}\rangle-\Lambda_a(\theta)H_n\right\},H_n=\tau_k\ \mbox{for some }k\geq1, \beta=\infty\right].
\end{align*}
Thus,
\begin{align*}
G_{n,z}(\theta)&:=\mathbb{E}\left\{g_n(\theta,\cdot)g_n(\theta,T_z\cdot)\right\}=\mathrm{e}^{-\langle\theta,z\rangle}E_{o,z}\left[f(\theta,n,X,\tilde{X}), n\in\mathcal{L}, l_1=0\right]\quad\mbox{where}\\
f(\theta,n,X,\tilde{X})&:=\exp\left\{\langle\theta,X_{H_n}\rangle-\Lambda_a(\theta)H_n\right\}\exp\{\langle\theta,\tilde{X}_{\tilde{H}_n}\rangle-\Lambda_a(\theta)\tilde{H}_n\}.
\end{align*}
Our aim is to show that $\left(G_{n,o}(\theta)\right)_{n\geq1}$ is  bounded. We start the argument by considering a related family of functions $\left(F_{n,z}(\theta)\right)_{n\geq1,z\in\mathbb{V}_d'}$ where
$$F_{n,z}(\theta):=\mathrm{e}^{-\langle\theta,z\rangle}E_{o,z}\left[\left.f(\theta,n,X,\tilde{X}), n\in\mathcal{L}, X_{H_n}\neq\tilde{X}_{\tilde{H}_n}\right| l_1=0\right].$$
Recall (\ref{hatirlatmak}). It follows from the definitions and the regeneration structure that
\begin{align*}
F_{n,z}(\theta)&=\sum_{k=1}^{n}\sum_{z'\in\mathbb{V}_d'}\mathrm{e}^{-\langle\theta,z\rangle}E_{o,z}\left[\left.f(\theta,n,X,\tilde{X}),k=\inf\{l\in\mathcal{L}:l>\nu_1, X_{H_l}\neq\tilde{X}_{\tilde{H}_l}\},\tilde{X}_{\tilde{H}_k}-X_{H_k}=z',\right.\right.\\
&\hspace{10.9cm}\left.\left.n\in\mathcal{L}, X_{H_n}\neq\tilde{X}_{\tilde{H}_n}\right| l_1=0\right]\\
&\quad+\mathrm{e}^{-\langle\theta,z\rangle}E_{o,z}\left[\left.f(\theta,n,X,\tilde{X}), n\leq\nu_1, n\in\mathcal{L}, X_{H_n}\neq\tilde{X}_{\tilde{H}_n}\right| l_1=0\right]\\
&=\sum_{k=1}^{n}\sum_{z'\in\mathbb{V}_d'}\mathrm{e}^{-\langle\theta,z\rangle}E_{o,z}\left[\left.f(\theta,k,X,\tilde{X}),k=\inf\{l\in\mathcal{L}:l>\nu_1, X_{H_l}\neq\tilde{X}_{\tilde{H}_l}\},\tilde{X}_{\tilde{H}_k}-X_{H_k}=z'\right| l_1=0\right]\\
&\qquad\qquad\quad\times\mathrm{e}^{-\langle\theta,z'\rangle}E_{o,z'}\left[\left.f(\theta,n-k,X,\tilde{X}),n-k\in\mathcal{L}, X_{H_{n-k}}\neq\tilde{X}_{\tilde{H}_{n-k}}\right| l_1=0\right]\\
&\quad+\mathrm{e}^{-\langle\theta,z\rangle}E_{o,z}\left[\left.f(\theta,n,X,\tilde{X}), n\leq\nu_1, n\in\mathcal{L}, X_{H_n}\neq\tilde{X}_{\tilde{H}_n}\right| l_1=0\right].
\end{align*}
Therefore,
\begin{align*}
F_{n,z}(\theta)&\leq\sum_{k=1}^{n}\mathrm{e}^{-\langle\theta,z\rangle}E_{o,z}\left[\left.f(\theta,k,X,\tilde{X}),k=\inf\{l\in\mathcal{L}:l>\nu_1, X_{H_l}\neq\tilde{X}_{\tilde{H}_l}\}\right| l_1=0\right]\sup_{z'\in\mathbb{V}_d'}F_{n-k,z'}(\theta)\\
&\quad+\mathrm{e}^{-\langle\theta,z\rangle}E_{o,z}\left[\left.f(\theta,n,X,\tilde{X}), n\leq\nu_1, n\in\mathcal{L}, X_{H_n}\neq\tilde{X}_{\tilde{H}_n}\right| l_1=0\right].
\end{align*}
In other words, 
$$F_{n,z}(\theta)\leq\sum_{k=1}^{n}B_{k,z}(\theta)\sup_{z'\in\mathbb{V}_d'}F_{n-k,z'}(\theta)+C_{n,z}(\theta)$$
where
\begin{align*}
B_{k,z}(\theta)&:=\mathrm{e}^{-\langle\theta,z\rangle}E_{o,z}\left[\left.f(\theta,k,X,\tilde{X}),k=\inf\{l\in\mathcal{L}:l>\nu_1, X_{H_l}\neq\tilde{X}_{\tilde{H}_l}\}\right| l_1=0\right]\quad\mbox{and}\\
C_{n,z}(\theta)&:=\mathrm{e}^{-\langle\theta,z\rangle}E_{o,z}\left[\left.f(\theta,n,X,\tilde{X}), n\leq\nu_1, n\in\mathcal{L}, X_{H_n}\neq\tilde{X}_{\tilde{H}_n}\right| l_1=0\right].
\end{align*}

\begin{lemma}\label{mevzubu}
There exists a $\kappa_{eq}\in(0,\kappa_3)$ such that
$$\mbox{(a)}\ \ C(\theta):=\sup_{n\geq1}\sup_{z\in\mathbb{V}_d'}C_{n,z}(\theta)<\infty\qquad\mbox{and}\qquad\mbox{(b)}\ \ B(\theta):=\sup_{z\in\mathbb{V}_d'}\sum_{k=1}^{\infty}B_{k,z}(\theta)<1$$ for every $\theta\in\mathcal{C}_a(\kappa_{eq})$.
\end{lemma}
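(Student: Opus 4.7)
The plan is to derive both bounds from three ingredients: the uniform lower bound $\inf_{z \in \mathbb{V}_d'} P_{o,z}(l_1 = 0) \geq c > 0$ supplied by Lemma \ref{nonintersectlemma}; the single-walk martingale identity of Lemma \ref{phillysh_one}; and the decoupling principle of (\ref{cokkibar}), which says that when the two walks visit disjoint vertex sets, the joint averaged law factorizes into a product of independent single-walk laws because $\omega$ is i.i.d.\ across sites.

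For part (a), the key observation is that on the event defining $C_{n,z}(\theta)$, the truncated trajectories $X[0, H_n]$ and $\tilde{X}[0, \tilde{H}_n]$ are vertex-disjoint: $\{n \in \mathcal{L}\}$ keeps each walk at levels $\geq n$ after its hitting time, so any intersection at a level $< n$ would have to occur between the two truncated paths, which is ruled out by $\{n \leq \nu_1\}$; the only remaining possible meeting at the common level $n$ is excluded by $\{X_{H_n} \neq \tilde{X}_{\tilde{H}_n}\}$. I would then decompose the event into a \emph{past} part (depending on $\omega$ at sites of levels $\leq n-1$) and a \emph{future} part (depending on $\omega$ at sites of levels $\geq n$); these parts use disjoint site sets, and within the past the two walks' contributions also use disjoint site sets. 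Factoring the $\mathbb{P}$-expectation accordingly and bounding the joint future probability by $1$ gives
\[
C_{n,z}(\theta)\, P_{o,z}(l_1 = 0) \leq h_n(\theta)^2, \qquad h_n(\theta) := E_o\bigl[\exp\{\langle\theta, X_{H_n}\rangle - \Lambda_a(\theta) H_n\}, \beta = \infty\bigr].
\]
Iterating Lemma \ref{phillysh_one} over the i.i.d.\ regeneration slabs shows that on $\{\beta = \infty\}$ the quantity $\exp\{\langle\theta, X_{\tau_k}\rangle - \Lambda_a(\theta) \tau_k\}$ is a product of mean-one factors; combined with the exponential-moment control of the overshoot $H_n - \tau_K$ (where $\tau_K$ is the last regeneration before level $n$) coming from Corollary \ref{mazaltov}, this yields $\sup_n h_n(\theta) < \infty$ once $|\theta|$ is small enough.

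For part (b), I would start at $\theta = 0$, where $f \equiv 1$ and $\sum_k B_{k,z}(0) = P_{o,z}(k^\ast < \infty \mid l_1 = 0)$ with $k^\ast := \inf\{l \in \mathcal{L}: l > \nu_1,\ X_{H_l} \neq \tilde{X}_{\tilde{H}_l}\}$. Since $\{k^\ast < \infty\} \subseteq \{\nu_1 < \infty\}$, Lemma \ref{nonintersectlemma} yields $\sum_k B_{k,z}(0) \leq 1 - c'$ uniformly in $z \in \mathbb{V}_d'$. To pass to small nonzero $\theta$, I would establish continuity of $\theta \mapsto \sum_k B_{k,z}(\theta)$ at the origin, uniformly in $z$, via dominated convergence. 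The dominating bound is an exponential tail estimate $B_{k,z}(\theta) \leq C e^{-\lambda k}$ valid for all small $|\theta|$, obtained by combining the i.i.d.\ structure of the joint regeneration slabs (cf.\ Rassoul-Agha and Seppäläinen \cite{TimoFirasCLT}) with the exponential moments of Corollary \ref{mazaltov}, which allow the tilting factor $f(\theta, k, X, \tilde{X})$ to be absorbed at a cost of $e^{O(|\theta|) k}$, a strict geometric improvement on the $\theta = 0$ rate when $|\theta|$ is small enough.

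The main technical hurdle is obtaining the uniform (in $z$) exponential tail of $B_{k,z}(\theta)$ in $k$, because one must intertwine the single-walk regeneration structure with the two-walk joint regeneration structure while tracking the endpoint-separation constraint $X_{H_l} \neq \tilde{X}_{\tilde{H}_l}$. The natural route is to condition first on $\nu_1$ and then apply the strong Markov property at $\nu_1$ and at each subsequent joint regeneration, re-invoking Lemma \ref{nonintersectlemma} after each failed separation attempt to obtain the desired geometric rate.
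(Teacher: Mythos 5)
Your part (a) is essentially the paper's argument: on the event defining $C_{n,z}(\theta)$ the two truncated paths are vertex-disjoint, so the averaged expectation factors into a product of single-walk expectations at the cost of $P_{o,z}(l_1=0)^{-1}$, which is uniformly controlled by Lemma \ref{nonintersectlemma}; the paper then bounds the single-walk factor by recognizing it as $\mathbb{E}\{g_n(\theta,\cdot)\}/P_o(\beta=\infty)$ and invoking Lemma \ref{L1lemma}, while your route through Lemma \ref{phillysh_one} plus overshoot control is a workable variant. Your reduction of part (b) to $B(0)<1$ plus a uniform continuity statement at $\theta=0$ is also the paper's strategy.

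The gap is in the dominating bound for part (b). The estimate $B_{k,z}(\theta)\leq C\mathrm{e}^{-\lambda k}$ is false already at $\theta=0$: the event $\{k=\inf\{l\in\mathcal{L}:l>\nu_1,\,X_{H_l}\neq\tilde{X}_{\tilde{H}_l}\}\}$ forces $\nu_1<k$, and while the gap between $\nu_1$ and the next separated joint regeneration does have an exponential tail (that is Lemma \ref{gulnamaz}), the level $\nu_1$ itself does not. The first meeting level of two independent ballistic walks is governed by the return to the origin of an effectively $(d-1)$-dimensional walk, so $P_{o,z}(\nu_1\approx j)$ decays only like $j^{-(d-1)/2}$; hence $B_{k,z}(0)$ has a polynomial tail in $k$. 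Your strong-Markov iteration, ``re-invoking Lemma \ref{nonintersectlemma} after each failed separation attempt,'' controls $k^*-\nu_1$, not $k^*$. Worse, once the base rate is only polynomial, absorbing the tilting factor at cost $\mathrm{e}^{O(|\theta|)k}$ destroys summability outright, since $\mathrm{e}^{O(|\theta|)k}k^{-(d-1)/2}$ is not summable for any fixed $|\theta|>0$. The paper's Lemma \ref{kontrolediyoriki} circumvents both problems: the tilting accumulated up to the last joint regeneration level $j$ preceding $\nu_1$ is not crudely bounded but is identified, via Lemma \ref{phillysh_one}, with the renewal measure of the tilted walk $(q^\theta(y))_{y\in\mathbb{Z}^d}$ of (\ref{immydit}) --- a probability measure, so it costs nothing --- and a local CLT for that tilted walk ((\ref{adivar})--(\ref{adnan})) bounds the probability of a joint regeneration at level $j$ with prescribed separation by $S(\theta)j^{-(d-1)/2}$, which is summable precisely because $d\geq4$. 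The remaining piece after $\nu_1$ is handled by the genuinely exponential estimates of Lemma \ref{jointreglemma}, which are also needed to sum over the separation $z'$. Without these ingredients the sum $\sum_k\sup_z B_{k,z}(\theta)$ cannot be controlled uniformly for small $\theta$, and the perturbation of $B(0)<1$ does not close.
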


\begin{remark}
Lemma \ref{mevzubu} is proved in Subsection \ref{noninterSection}.
\end{remark}

\begin{lemma}\label{kombinezon}
For every $\theta\in\mathcal{C}_a(\kappa_{eq})$,
$$\sup_{n\geq1}\sup_{z\in\mathbb{V}_d'}F_{n,z}(\theta)<\infty.$$ 
\end{lemma}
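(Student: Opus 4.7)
The plan is to treat the inequality
\[
F_{n,z}(\theta) \leq \sum_{k=1}^{n} B_{k,z}(\theta)\sup_{z'\in\mathbb{V}_d'}F_{n-k,z'}(\theta) + C_{n,z}(\theta)
\]
derived in Subsection~\ref{genclik} as a subcritical renewal inequality and iterate it, using Lemma~\ref{mevzubu} as the only quantitative input. Set $F_n(\theta) := \sup_{z\in\mathbb{V}_d'}F_{n,z}(\theta)$ and $M_n(\theta) := \max_{0\leq j\leq n} F_j(\theta)$. Since $F_{n-k}(\theta) \leq M_{n-1}(\theta)$ for $1 \leq k \leq n$, the inequality combined with Lemma~\ref{mevzubu} gives
\[
F_{n,z}(\theta) \leq M_{n-1}(\theta)\sum_{k=1}^{n} B_{k,z}(\theta) + C_{n,z}(\theta) \leq B(\theta) M_{n-1}(\theta) + C(\theta),
\]
and hence $F_n(\theta) \leq B(\theta) M_{n-1}(\theta) + C(\theta)$ after taking $\sup_z$.

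Next I would check the base case $F_0(\theta) = 1$ directly from the definition of $F_{0,z}(\theta)$: when $n = 0$ one has $H_0 = \tilde{H}_0 = 0$, so $\{X_{H_0}\neq \tilde{X}_{\tilde{H}_0}\} = \{o \neq z\}$ holds automatically for $z\in\mathbb{V}_d'$, the event $\{0\in\mathcal{L}\}$ coincides with $\{l_1=0\}$ (as noted in the excerpt, since $o,z\in\mathbb{V}_d$), and the factor $f(\theta,0,X,\tilde{X}) = \mathrm{e}^{\langle\theta,z\rangle}$ cancels the prefactor $\mathrm{e}^{-\langle\theta,z\rangle}$. A one-line induction then yields the uniform bound $M_n(\theta) \leq K(\theta) := \max\bigl(1,\, C(\theta)/(1-B(\theta))\bigr)$ for every $n\geq 0$: the case $n = 0$ is trivial, and if $M_{n-1}(\theta) \leq K(\theta)$ then $F_n(\theta) \leq B(\theta) K(\theta) + C(\theta) \leq K(\theta)$ by the defining property of $K(\theta)$, so the bound propagates to $M_n(\theta)$.

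The conclusion $\sup_{n\geq 1}\sup_{z\in\mathbb{V}_d'} F_{n,z}(\theta) \leq K(\theta) < \infty$ follows immediately, with finiteness ensured by the strict inequality $B(\theta)<1$ in Lemma~\ref{mevzubu}. The step at this level is essentially bookkeeping; all the genuine analytic content has been packed into Lemma~\ref{mevzubu}, and the real obstacle lies there, specifically in establishing the subcriticality $B(\theta) < 1$. That is where the hypothesis $d \geq 4$ must be used, via the non-intersection estimate of Lemma~\ref{nonintersectlemma}.
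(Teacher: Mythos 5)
Your proposal is correct and follows essentially the same route as the paper: both iterate the renewal inequality $F_{n,z}(\theta)\leq\sum_{k=1}^{n}B_{k,z}(\theta)\sup_{z'}F_{n-k,z'}(\theta)+C_{n,z}(\theta)$ and invoke Lemma~\ref{mevzubu} to conclude $\sup_{n,z}F_{n,z}(\theta)\leq C(\theta)(1-B(\theta))^{-1}$ (your constant $K(\theta)$ is the same up to the harmless $\max$ with $1$). The only difference is presentational: the paper takes $\sup_{n\leq N}$ on both sides and rearranges, while you run an explicit induction with the base case $F_{0,z}(\theta)=1$, which is a slightly more careful way of handling the a priori finiteness needed for that rearrangement.
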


\begin{proof}
For every $n\geq1$, $N\geq n$ and $z\in\mathbb{V}_d'$,
\begin{align*}
F_{n,z}(\theta)&\leq\sum_{k=1}^{n}B_{k,z}(\theta)\sup_{z'\in\mathbb{V}_d'}F_{n-k,z'}(\theta)+C_{n,z}(\theta)\leq\left(\sum_{k=1}^{n}B_{k,z}(\theta)\right)\sup_{m\leq N}\sup_{z'\in\mathbb{V}_d'}F_{m,z'}(\theta)+C_{n,z}(\theta)\\
&\leq B(\theta)\sup_{m\leq N}\sup_{z'\in\mathbb{V}_d'}F_{m,z'}(\theta)+C(\theta).
\end{align*}
Therefore, 
$$\sup_{n\leq N}\sup_{z\in\mathbb{V}_d'}F_{n,z}(\theta)\leq B(\theta)\sup_{n\leq N}\sup_{z\in\mathbb{V}_d'}F_{n,z}(\theta)+C(\theta).$$
Finally, by Lemma \ref{mevzubu},
$$\sup_{n\geq1}\sup_{z\in\mathbb{V}_d'}F_{n,z}(\theta)\leq C(\theta)\left(1-B(\theta)\right)^{-1}<\infty.\qedhere$$
\end{proof}

\begin{proof}[Proof of Lemma \ref{L2lemma}]
For every $n\geq1$, $\theta\in\mathcal{C}_a(\kappa_{eq})$ and $z\in\mathbb{V}_d'$,
\begin{align*}
&F_{n+1,z}(\theta)=\mathrm{e}^{-\langle\theta,z\rangle}E_{o,z}\left[\left.f(\theta,n+1,X,\tilde{X}), n+1\in\mathcal{L}, X_{H_{n+1}}\neq\tilde{X}_{\tilde{H}_{n+1}}\right| l_1=0\right]\\
&\quad\geq\mathrm{e}^{-\langle\theta,z\rangle}E_{o,z}\left[\left.f(\theta,n+1,X,\tilde{X}), n\in\mathcal{L}, X_{H_n}=\tilde{X}_{\tilde{H}_n}, n+1\in\mathcal{L}, X_{H_{n+1}}\neq\tilde{X}_{\tilde{H}_{n+1}}\right| l_1=0\right]\\
&\quad=\mathrm{e}^{-\langle\theta,z\rangle}E_{o,z}\left[\left.f(\theta,n,X,\tilde{X}), n\in\mathcal{L}, X_{H_n}=\tilde{X}_{\tilde{H}_n} \right| l_1=0\right]E_{o,o}\left[\left.f(\theta,1,X,\tilde{X}),1\in\mathcal{L}, X_{H_1}\neq\tilde{X}_{\tilde{H}_1}\right| l_1=0\right].
\end{align*}
Therefore,
\begin{align}
\frac{G_{n,z}(\theta)}{P_{o,z}(l_1=0)}-F_{n,z}(\theta)&=\mathrm{e}^{-\langle\theta,z\rangle}E_{o,z}\left[\left.f(\theta,n,X,\tilde{X}), n\in\mathcal{L}, X_{H_n}=\tilde{X}_{\tilde{H}_n} \right| l_1=0\right]\nonumber\\
&\leq E_{o,o}\left[\left.f(\theta,1,X,\tilde{X}),1\in\mathcal{L}, X_{H_1}\neq\tilde{X}_{\tilde{H}_1}\right| l_1=0\right]^{-1}F_{n+1,z}(\theta).\label{klirlibdd}
\end{align}
By the uniform ellipticity assumption (\ref{ellipticity}), Lemma \ref{nonintersectlemma}, and part (a) of Lemma \ref{madabir}, the first term in (\ref{klirlibdd}) is bounded from above. This, in combination with Lemma \ref{kombinezon}, implies that
$$\sup_{n\geq1}\sup_{z\in\mathbb{V}_d'}G_{n,z}(\theta)<\infty.$$

For every $\hat{z}\in U\cap\mathbb{V}_d$, 
\begin{align*}
g_n(\theta,T_{\hat{z}}\omega)&=\mathrm{e}^{-\langle\theta,\hat{z}\rangle}E_{\hat{z}}^\omega\left[\exp\left\{\langle\theta,X_{H_n}\rangle-\Lambda_a(\theta)H_n\right\},H_n=\tau_k\ \mbox{for some }k\geq1, \beta=\infty\right]\\
&\geq\mathrm{e}^{-\langle\theta,\hat{z}\rangle}E_{\hat{z}}^\omega\left[\exp\left\{\langle\theta,X_{H_n}\rangle-\Lambda_a(\theta)H_n\right\},H_n=\tau_k\ \mbox{for some }k\geq1, X_1=0, \beta=\infty\right]\\
&\geq\delta\mathrm{e}^{-\langle\theta,\hat{z}\rangle-\Lambda_a(\theta)}E_o^\omega\left[\exp\left\{\langle\theta,X_{H_n}\rangle-\Lambda_a(\theta)H_n\right\},H_n=\tau_k\ \mbox{for some }k\geq1, \beta=\infty\right]\\
&=\delta\mathrm{e}^{-\langle\theta,\hat{z}\rangle-\Lambda_a(\theta)}g_n(\theta,\omega)\geq\delta\mathrm{e}^{-2\kappa_{eq}}g_n(\theta,\omega).
\end{align*}
Hence,
$$\sup_{n\geq1}\mathbb{E}\left\{g_n(\theta,\cdot)^2\right\}\leq\delta^{-1}\mathrm{e}^{2\kappa_{eq}}\sup_{n\geq1}\sup_{z\in\mathbb{V}_d'}\mathbb{E}\left\{g_n(\theta,\cdot)g_n(\theta,T_z\cdot)\right\}=\delta^{-1}\mathrm{e}^{2\kappa_{eq}}\sup_{n\geq1}\sup_{z\in\mathbb{V}_d'}G_{n,z}(\theta)<\infty.\qedhere$$
\end{proof}

\subsection{Proof of Lemma \ref{mevzubu}}\label{noninterSection}

Let us start by proving the easy part.
\begin{proof}[Proof of part (a) of Lemma \ref{mevzubu}]
For every $n\geq1$, $\theta\in\mathcal{C}_a(\kappa_3)$ and $z\in\mathbb{V}_d'$,
\begin{align}
C_{n,z}(\theta)&\leq\frac{\mathrm{e}^{-\langle\theta,z\rangle}}{P_{o,z}(l_1=0)}E_{o,z}\left[f(\theta,n,X,\tilde{X}), n\leq\nu_1, n\in\mathcal{L}, \beta=\infty, \tilde{\beta}=\infty\right]\nonumber\\
&\leq\frac{\mathrm{e}^{-\langle\theta,z\rangle}}{P_{o,z}(l_1=0)}E_{o,z}\left[f(\theta,n,X,\tilde{X}), \{X_i:0\leq i<H_n\}\cap\{\tilde{X}_j:0\leq j<\tilde{H}_n\}=\emptyset, \beta\geq H_n, \tilde{\beta}\geq\tilde{H}_n\right]\nonumber\\
&=\frac{\mathrm{e}^{-\langle\theta,z\rangle}}{P_{o,z}(l_1=0)}E_o\!\times\!E_z\left[f(\theta,n,X,\tilde{X}), \{X_i:0\leq i<H_n\}\cap\{\tilde{X}_j:0\leq j<\tilde{H}_n\}=\emptyset, \beta\geq H_n, \tilde{\beta}\geq\tilde{H}_n\right]\label{coupling}\\
&\leq\frac{\mathrm{e}^{-\langle\theta,z\rangle}}{P_{o,z}(l_1=0)}E_o\!\times\!E_z\left[f(\theta,n,X,\tilde{X}), \beta\geq H_n, \tilde{\beta}\geq\tilde{H}_n\right]\nonumber\\
&=\frac{1}{P_{o,z}(l_1=0)}E_o\left[\exp\left\{\langle\theta,X_{H_n}\rangle-\Lambda_a(\theta)H_n\right\}, \beta\geq H_n\right]^2.\label{birisimver}
\end{align}
Here, (\ref{coupling}) is similar to (\ref{cokkibar}). Both facts follow from a standard coupling argument (cf.\ \cite{NoamOfer}, Proposition 3.7.) Note that
\begin{align*}
&E_o\left[\exp\left\{\langle\theta,X_{H_n}\rangle-\Lambda_a(\theta)H_n\right\}, \beta\geq H_n\right]=\frac{P_o(\beta=\infty)}{P_o(\beta=\infty)}E_o\left[\exp\left\{\langle\theta,X_{H_n}\rangle-\Lambda_a(\theta)H_n\right\}, \beta\geq H_n\right]\\
&\qquad=\frac{1}{P_o(\beta=\infty)}E_o\left[\exp\left\{\langle\theta,X_{H_n}\rangle-\Lambda_a(\theta)H_n\right\}, \beta\geq H_n\right]P_o(\langle X_i,e_1\rangle\geq n\mbox{ for every }i\geq H_n)\\
&\qquad=\frac{1}{P_o(\beta=\infty)}E_o\left[\exp\left\{\langle\theta,X_{H_n}\rangle-\Lambda_a(\theta)H_n\right\}, H_n=\tau_k\mbox{ for some }k\geq1, \beta=\infty\right]\\
&\qquad={\mathbb{E}\{g_n(\theta,\cdot)\}}/{P_o(\beta=\infty)}.
\end{align*}
Therefore, (\ref{birisimver}), Lemma \ref{L1lemma} and Lemma \ref{nonintersectlemma} imply that
$$\sup_{n\geq1}\sup_{z\in\mathbb{V}_d'}C_{n,z}(\theta)\leq P_o(\beta=\infty)^{-2}\sup_{z\in\mathbb{V}_d'}P_{o,z}(l_1=0)^{-1}\left(\sup_{n\geq1}\mathbb{E}\{g_n(\theta,\cdot)\}\right)^2<\infty.\qedhere$$
\end{proof}

The proof of part (b) of Lemma \ref{mevzubu} is more technical. At $\theta=0$, 
\begin{align}
B(0)&=\sup_{z\in\mathbb{V}_d'}\sum_{k=1}^{\infty}B_{k,z}(0)=\sup_{z\in\mathbb{V}_d'}\sum_{k=1}^{\infty}P_{o,z}\left(\left.k=\inf\{l\in\mathcal{L}:l>\nu_1, X_{H_l}\neq\tilde{X}_{\tilde{H}_l}\}\right|l_1=0\right)\nonumber\\
&=\sup_{z\in\mathbb{V}_d'}P_{o,z}\left(\left.\nu_1<\infty\right|l_1=0\right)=1-\inf_{z\in\mathbb{V}_d'}P_{o,z}\left(\left.\nu_1=\infty\right|l_1=0\right)<1\label{birdenkucuk}
\end{align} by Lemma \ref{nonintersectlemma}. For every $\theta\in\mathcal{C}_a(\kappa_3)$ and $z\in\mathbb{V}_d'$,
\begin{align}
\sum_{k=1}^{\infty}B_{k,z}(\theta)&=\sum_{k=1}^{\infty}B_{k,z}(0)+\sum_{k=1}^{\infty}\left(B_{k,z}(\theta)-B_{k,z}(0)\right)\quad\mbox{and}\nonumber\\
B(\theta)=\sup_{z\in\mathbb{V}_d'}\sum_{k=1}^{\infty}B_{k,z}(\theta)&\leq\sup_{z\in\mathbb{V}_d'}\sum_{k=1}^{\infty}B_{k,z}(0)+\sum_{k=1}^{\infty}\sup_{z\in\mathbb{V}_d'}\left(B_{k,z}(\theta)-B_{k,z}(0)\right)\nonumber\\
&=B(0)+\sum_{k=1}^{\infty}\sup_{z\in\mathbb{V}_d'}\left(B_{k,z}(\theta)-B_{k,z}(0)\right).\label{kontrolet}
\end{align}
The next three lemmas control the sum in (\ref{kontrolet}).

\begin{lemma}\label{kontrolediyorbir}
For every $k\geq1$ and $\epsilon>0$, there exists a $\kappa_4=\kappa_4(k,\epsilon)\in(0,\kappa_3)$ such that
\begin{equation}\label{ayseguldu}
\sup_{z\in\mathbb{V}_d'}\sup_{\theta\in\mathcal{C}_a(\kappa_4)}\left(B_{k,z}(\theta)-B_{k,z}(0)\right)<\epsilon.
\end{equation}
\end{lemma}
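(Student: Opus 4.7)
The plan is, for each fixed $k\geq1$, to show that the integrand defining $B_{k,z}(\theta)$ is close to the one defining $B_{k,z}(0)$ when $\theta$ is close to $0$, with the closeness quantified uniformly in $z\in\mathbb{V}_d'$. Writing
$$W=W(\theta,X,\tilde{X}):=\langle\theta,X_{H_k}\rangle-\Lambda_a(\theta)H_k+\langle\theta,\tilde{X}_{\tilde{H}_k}-z\rangle-\Lambda_a(\theta)\tilde{H}_k,$$
we have $\mathrm{e}^{-\langle\theta,z\rangle}f(\theta,k,X,\tilde{X})=\mathrm{e}^W$. Since the lemma asks only for an upper bound on the signed difference, it suffices to estimate $E_{o,z}[(\mathrm{e}^W-1)^+\mathbf{1}_A|l_1=0]$, and I would use the elementary inequality $(\mathrm{e}^W-1)^+\leq W^+\mathrm{e}^{W^+}$.

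The crucial step is to bound $W^+$ by $|\theta|V$ for some $V$ whose $P_o$-exponential moments are finite (for small rates). In the non-nestling case, $|\Lambda_a(\theta)|\leq|\theta|$ from (\ref{noldush}) yields $W^+\leq|\theta|V$ with $V:=|X_{H_k}|+H_k+|\tilde{X}_{\tilde{H}_k}-z|+\tilde{H}_k$; since $H_k\leq\tau_k$ and $|X_{H_k}|\leq\sum_{j=1}^k|X_{\tau_j}-X_{\tau_{j-1}}|$, the required exponential moments follow from Corollary \ref{mazaltov}(b). In the nestling case, the key observation is that $\Lambda_a(\theta)>0$ on $\mathcal{C}_a(\kappa_4)$ by the very definition (\ref{nebiliyim}), so the non-positive terms $-\Lambda_a(\theta)H_k$ and $-\Lambda_a(\theta)\tilde{H}_k$ can be \emph{dropped} in the upper bound, producing the sharper $W^+\leq|\theta|(|X_{H_k}|+|\tilde{X}_{\tilde{H}_k}-z|)$; this strictly smaller $V$ has the required exponential moments through Corollary \ref{mazaltov}(c), even though $H_k$ and $\tilde{H}_k$ themselves do not.

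For the uniformity in $z$, I would combine three ingredients. First, Lemma \ref{nonintersectlemma} gives $c_0:=\inf_{z\in\mathbb{V}_d'}P_{o,z}(l_1=0)>0$, so conditioning on $\{l_1=0\}$ inflates expectations by at most the factor $c_0^{-1}$. Second, the quenched factorization $P_{o,z}^\omega=P_o^\omega\otimes P_z^\omega$ lets me split the integral along $V=V_X+V_{\tilde{X},z}$, and a Cauchy-Schwarz in $\omega$ together with Jensen in the walk yields
$$E_{o,z}[Ve^{|\theta|V}]\leq C\bigl(E_o[V_X^2\mathrm{e}^{2|\theta|V_X}]^{1/2}E_o[\mathrm{e}^{2|\theta|V_X}]^{1/2}+\text{symmetric term}\bigr).$$
Third, translation invariance of $\mathbb{P}$ converts $P_z$-expectations of functions of $\tilde{X}-z$ into $P_o$-expectations of the corresponding functions of $X$, removing all $z$-dependence. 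Combining these, $E_{o,z}[Ve^{|\theta|V}\mathbf{1}_A|l_1=0]\leq C_k$ uniformly in $z$ for $|\theta|<\kappa_4$ sufficiently small, whence
$$\sup_{z\in\mathbb{V}_d'}\bigl(B_{k,z}(\theta)-B_{k,z}(0)\bigr)\leq c_0^{-1}|\theta|C_k,$$
which is $<\epsilon$ for $\kappa_4=\kappa_4(k,\epsilon)$ chosen small enough.

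The main obstacle is exactly the nestling case: there $\tau_1$ has only polynomial moments (Corollary \ref{mazaltov}(a)), so a naive estimate $|W|\leq|\theta|(H_k+\tilde{H}_k+\cdots)$ would produce an exponential tilt $\mathrm{e}^{|\theta|H_k}$ whose expectation is infinite for every $|\theta|>0$. The argument works because the definition (\ref{nebiliyim}) of $\mathcal{C}_a(\kappa_4)$ was engineered to carve out the region $\{\Lambda_a>0\}$ in the nestling case, which provides just enough strict positivity to absorb the offending $H_k,\tilde{H}_k$ contributions into the non-positive part of $W$ before they reach the exponential.
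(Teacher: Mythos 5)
Your proof is correct, but it takes a genuinely different route from the paper's. The paper splits $\mathbb{V}_d'$ into two regimes: for $|z|>N$ it applies Cauchy--Schwarz and exploits the geometry of the event (the occurrence of $k=\inf\{l\in\mathcal{L}:l>\nu_1,\,X_{H_l}\neq\tilde{X}_{\tilde{H}_l}\}$ forces the two paths to intersect, hence forces an excursion of size at least $|z|/2$ before $\tau_k$), so that $B_{k,z}(\theta)$ itself is small uniformly over $\theta\in\mathcal{C}_a(\kappa_3/4)$ once $|z|$ is large; for the finitely many remaining $z$ with $|z|\leq N$ it invokes only qualitative continuity of $\theta\mapsto B_{k,z}(\theta)$ at $0$ via dominated convergence. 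You instead establish a quantitative Lipschitz-type bound $\sup_{z}\left(B_{k,z}(\theta)-B_{k,z}(0)\right)\leq c_0^{-1}C_k|\theta|$ that is uniform in $z$ from the outset, using $(\mathrm{e}^W-1)^+\leq W^+\mathrm{e}^{W^+}$ together with the observation that $\Lambda_a>0$ on $\mathcal{C}_a$ in the nestling case allows the $-\Lambda_a(\theta)H_k$, $-\Lambda_a(\theta)\tilde{H}_k$ terms to be discarded before exponentiating --- the same device the paper deploys later in the proof of Lemma \ref{isimverbir}(b). Your argument never uses the structure of the event $A$ (its indicator is simply bounded by $1$) and yields a stronger, explicit modulus of continuity; the price is the Cauchy--Schwarz/translation-invariance bookkeeping needed to decouple the two walks in the common environment, whereas the paper's small-$|z|$ step is essentially free. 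Both arguments force $\kappa_4$ to shrink with $k$ (roughly like $\kappa_3/k$, to control $k$-fold sums of regeneration increments via H\"older), which is consistent with the statement's allowance that $\kappa_4$ depend on $k$.
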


\begin{proof}
For every $k\geq1$, $\theta\in\mathcal{C}_a(\kappa_3)$ and $z\in\mathbb{V}_d'$,
\begin{align}
&B_{k,z}(\theta)-B_{k,z}(0)=E_{o,z}\left[\left.\mathrm{e}^{-\langle\theta,z\rangle}f(\theta,k,X,\tilde{X})-1,k=\inf\{l\in\mathcal{L}:l>\nu_1, X_{H_l}\neq\tilde{X}_{\tilde{H}_l}\}\right| l_1=0\right]\nonumber\\
&\qquad\leq P_{o,z}(l_1=0)^{-1}E_{o,z}\left[\left(\mathrm{e}^{-\langle\theta,z\rangle}f(\theta,k,X,\tilde{X})-1\right)^2\right]^{1/2}P_{o,z}\left(k=\inf\{l\in\mathcal{L}:l>\nu_1, X_{H_l}\neq\tilde{X}_{\tilde{H}_l}\}\right)^{1/2}\nonumber\\
&\qquad\leq P_{o,z}(l_1=0)^{-1}\mathbb{E}\left\{E_o^\omega\!\times\!E_o^{T_z\omega}\left[\left(f(\theta,k,X,\tilde{X})-1\right)^2\right]\right\}^{1/2}\nonumber\\
&\qquad\qquad\times\left(P_{o,z}\left(\sup_{1\leq i\leq H_k}\left|X_i\right|\geq\frac{|z|}{2}\right)+P_{o,z}\left(\sup_{1\leq j\leq\tilde{H}_k}\left|\tilde{X}_j-z\right|\geq\frac{|z|}{2}\right)\right)^{1/2}\nonumber\\
&\qquad\leq P_{o,z}(l_1=0)^{-1}\left(\mathbb{E}\left\{E_o^\omega\!\times\!E_o^{T_z\omega}\left[f(\theta,k,X,\tilde{X})^2\right]\right\}+1\right)^{1/2}\sqrt2P_o\left(\sup_{1\leq i\leq H_k}\left|X_i\right|\geq\frac{|z|}{2}\right)^{1/2}\nonumber\\
&\qquad\leq P_{o,z}(l_1=0)^{-1}\left(E_o\left[\exp\left\{4\langle\theta,X_{H_k}\rangle-4\Lambda_a(\theta)H_k\right\}\right]+1\right)^{1/2}\sqrt2P_o\left(\sup_{1\leq i\leq\tau_k}\left|X_i\right|\geq\frac{|z|}{2}\right)^{1/2}.\label{esregnet}
\end{align}
For every $\epsilon>0$, it follows from (\ref{esregnet}), Corollary \ref{mazaltov} and Lemma \ref{nonintersectlemma} that there exists an $N\geq1$ such that
\begin{equation}\label{neyapiyorsimdi}
\sup_{{z\in\mathbb{V}_d'}\atop{|z|>N}}\sup_{\theta\in\mathcal{C}_a(\kappa_3/4)}\left(B_{k,z}(\theta)-B_{k,z}(0)\right)<\epsilon.
\end{equation}

Note that $\theta\mapsto f(\theta,k,X,\tilde{X})$ is continuous. Hence, for every $k\geq1$ and $z\in\mathbb{V}_d'$, the map $\theta\mapsto B_{k,z}(\theta)$ is continuous at $0$ by Schwarz's inequality, Corollary \ref{mazaltov} and the dominated convergence theorem. Consequently, there exists a $\kappa_4\in(0,\kappa_3/4)$ such that
\begin{equation}\label{neyapiyorsimdilik}
\sup_{{z\in\mathbb{V}_d'}\atop{|z|\leq N}}\sup_{\theta\in\mathcal{C}_a(\kappa_4)}\left|B_{k,z}(\theta)-B_{k,z}(0)\right|<\epsilon.
\end{equation}
Clearly, (\ref{neyapiyorsimdi}) and (\ref{neyapiyorsimdilik}) imply (\ref{ayseguldu}).
\end{proof}

\begin{lemma}\label{kontrolediyoriki}
There exists a $\kappa_5\in(0,\kappa_3)$ such that
$$\sum_{k=1}^\infty\sup_{z\in\mathbb{V}_d'}\sup_{\theta\in\mathcal{C}_a(\kappa_5)}B_{k,z}(\theta)<\infty.$$
\end{lemma}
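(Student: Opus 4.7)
The plan is to apply Cauchy--Schwarz to $B_{k,z}(\theta)$, writing
\begin{align*}
B_{k,z}(\theta)&\leq\frac{\mathrm{e}^{-\langle\theta,z\rangle}}{P_{o,z}(l_{1}=0)}\,E_{o,z}\!\left[f(\theta,k,X,\tilde X)^{2}\mathbf{1}_{\{l_{1}=0\}}\right]^{1/2}\\
&\quad\times P_{o,z}\!\left(k=\inf\{l\in\mathcal{L}:l>\nu_{1},X_{H_{l}}\neq\tilde X_{\tilde H_{l}}\}\right)^{1/2},
\end{align*}
so as to separate an exponential factor from the geometric event. By Lemma \ref{nonintersectlemma}, $P_{o,z}(l_{1}=0)^{-1}$ is bounded uniformly in $z\in\mathbb{V}_d'$. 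On $\{l_{1}=0\}=\{\beta=\tilde\beta=\infty\}$ the two walks are conditionally independent given $\omega$. Dropping the coupling and using the i.i.d.\ regeneration increments together with Lemma \ref{phillysh_one}, I would bound the moment factor by $C\,c(\theta)^{2k}$, where
$$c(\theta):=E_{o}\!\left[\exp\{2\langle\theta,X_{\tau_{1}}\rangle-2\Lambda_{a}(\theta)\tau_{1}\}\mid\beta=\infty\right].$$
Corollary \ref{mazaltov} makes $c(\theta)$ finite for $|\theta|$ small, and dominated convergence gives $c(\theta)\to 1$ as $\theta\to 0$.

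The key estimate, and main obstacle, is geometric decay of the probability factor: I aim to show that there exist $C<\infty$ and $\rho\in(0,1)$, both independent of $z\in\mathbb{V}_{d}'$, with
$$P_{o,z}\!\left(k=\inf\{l\in\mathcal{L}:l>\nu_{1},X_{H_{l}}\neq\tilde X_{\tilde H_{l}}\}\right)\leq C\rho^{k}.$$
The idea is to condition on $\nu_{1}=n$ and on the pair of walks up to that level; the subsequent joint regeneration levels form a renewal process whose gaps have exponential moments (by Corollary \ref{mazaltov}), so at least linearly many of them fall in $(n,k]$. At each such level, starting from whatever common position the walks share, Lemma \ref{nonintersectlemma} supplies a positive probability, bounded below uniformly in the state, that the walks attain distinct positions at the next joint regeneration. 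The probability that all these Bernoulli ``separation attempts'' fail thus decays geometrically in $k$.

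Combining the two bounds gives $B_{k,z}(\theta)\leq C'(c(\theta)\sqrt{\rho})^{k}$. Choosing $\kappa_{5}\in(0,\kappa_{3})$ small enough that $c(\theta)^{2}\rho<1$ for every $\theta\in\mathcal{C}_{a}(\kappa_{5})$ then makes $\sup_{z,\theta}B_{k,z}(\theta)$ geometrically small in $k$, and the required summability follows. The principal technical task is extracting the i.i.d.\ Bernoulli structure from the joint regeneration framework of Rassoul-Agha--Sepp\"al\"ainen uniformly over the possible relative positions of $(X_{H_{l}},\tilde X_{\tilde H_{l}})$; the moment bound on $f(\theta,k,X,\tilde X)^{2}$ is comparatively routine given the tools already assembled in Subsection \ref{prelim} and the renewal identities of Subsection \ref{guyaispat_bir}.
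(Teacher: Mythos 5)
Your reduction has a genuine gap at its key step: the claimed uniform geometric bound
$$P_{o,z}\bigl(k=\inf\{l\in\mathcal{L}:l>\nu_{1},\,X_{H_{l}}\neq\tilde X_{\tilde H_{l}}\}\bigr)\leq C\rho^{k}$$
is false. The event does not force $\nu_{1}$ to be small: it contains, for instance, $\{\nu_{1}=k-1\}$ intersected with a conditionally non-negligible separation event, and the first intersection level of the two walks has only polynomial tails. Observed at joint regeneration levels, the difference of the two positions is essentially a mean-zero random walk in the $(d-1)$-dimensional hyperplane $\mathbb{V}_d$, so the probability that the walks meet for the first time around level $k$ is of order $k^{-(d-1)/2}$, not $\rho^{k}$. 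This polynomial rate is precisely where the hypothesis $d\geq4$ enters the lemma ($\sum_k k^{-(d-1)/2}<\infty$ needs $d\geq4$); a geometric bound would make the argument dimension-independent, which should have been a warning sign. Your renewal/Bernoulli argument correctly captures the exponential decay of the gap between $\nu_{1}$ and the first \emph{separating} joint regeneration level after it --- that is the content of Lemma \ref{gulnamaz} --- but it says nothing about the distribution of $\nu_{1}$ itself.

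The polynomial decay is still summable, but it can no longer absorb your moment factor. By Jensen's inequality and Lemma \ref{phillysh_one}, $c(\theta)\geq E_o\left[\left.\exp\{\langle\theta,X_{\tau_1}\rangle-\Lambda_a(\theta)\tau_1\}\right|\beta=\infty\right]^2=1$, with strict inequality for generic $\theta\neq0$; hence $c(\theta)^{2k}$ grows exponentially in $k$ for every fixed nonzero $\theta$, and $c(\theta)^{k}k^{-(d-1)/4}$ is not summable no matter how small $|\theta|$ is. The paper escapes this by splitting the path at $j:=\sup\{l\in\mathcal{L}:l\leq\nu_{1}\}$ instead of applying Cauchy--Schwarz globally: on the segment up to level $j$ the tilted weight is handled through the probability kernel $q^\theta$ of (\ref{immydit}) (mean one, so no exponential growth), and a local CLT for that walk yields the factor $S(\theta)j^{-(d-1)/2}$; only on the segment from $j$ to $k$ is an exponential moment cost $\mathrm{e}^{a_1|\theta|(k-j)}$ paid (Lemma \ref{isimverbir}), and there it is beaten by the exponential tail $\mathrm{e}^{-a_2(k-j)/4}$ of the first separating joint regeneration level (Lemma \ref{gulnamaz}) once $|\theta|$ is small. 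To repair your argument you would need both missing ingredients: the decomposition at the last joint regeneration level before $\nu_{1}$, and the local CLT estimate for the tilted walk.
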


\begin{proof}
For every $k\geq1$, $\kappa\in(0,\kappa_3)$, $\theta\in\mathcal{C}_a(\kappa)$ and $z\in\mathbb{V}_d'$,
\begin{align}
B_{k,z}(\theta)&=\mathrm{e}^{-\langle\theta,z\rangle}E_{o,z}\left[\left.f(\theta,k,X,\tilde{X}),k=\inf\{l\in\mathcal{L}:l>\nu_1, X_{H_l}\neq\tilde{X}_{\tilde{H}_l}\}\right|l_1=0\right]\nonumber\\
&=\sum_{j=0}^{k-1}\sum_{z'\in\mathbb{V}_d}\mathrm{e}^{-\langle\theta,z\rangle}E_{o,z}\left[\left.f(\theta,k,X,\tilde{X}),j=\sup\{l\in\mathcal{L}:l\leq\nu_1\},\tilde{X}_{\tilde{H}_j}-X_{H_j}=z',\right.\right.\nonumber\\
&\hspace{6.7cm}\left.\left.k=\inf\{l\in\mathcal{L}:l>\nu_1, X_{H_l}\neq\tilde{X}_{\tilde{H}_l}\}\right|l_1=0\right]\nonumber\\
&\leq\sum_{j=0}^{k-1}\sum_{z'\in\mathbb{V}_d}\mathrm{e}^{-\langle\theta,z\rangle}E_{o,z}\left[\left.f(\theta,j,X,\tilde{X}),j\in\mathcal{L},j\leq\nu_1,\tilde{X}_{\tilde{H}_j}-X_{H_j}=z'\right| l_1=0\right]\nonumber\\
&\hspace{1.0cm}\times\mathrm{e}^{-\langle\theta,z'\rangle}E_{o,z'}\left[\left.f(\theta,k-j,X,\tilde{X}),k-j=\inf\{l\in\mathcal{L}:l>0, X_{H_l}\neq\tilde{X}_{\tilde{H}_l}\},k-j>\nu_1\right|l_1=0\right]\nonumber\\
&=\sum_{j=0}^{k-1}\sum_{z'\in\mathbb{V}_d}\mathrm{e}^{-\langle\theta,z\rangle}E_{o,z}\left[\left.f(\theta,j,X,\tilde{X}),j\in\mathcal{L},j\leq\nu_1,\tilde{X}_{\tilde{H}_j}-X_{H_j}=z'\right| l_1=0\right]h_{k-j,z'}(\theta)\nonumber\\
&\leq\sum_{j=0}^{k-1}\sup_{z'\in\mathbb{V}_d}\mathrm{e}^{-\langle\theta,z\rangle}E_{o,z}\left[\left.f(\theta,j,X,\tilde{X}),j\in\mathcal{L},j\leq\nu_1,\tilde{X}_{\tilde{H}_j}-X_{H_j}=z'\right| l_1=0\right]\sum_{z'\in\mathbb{V}_d}h_{k-j,z'}(\theta)\label{ersubigger}
\end{align}
where, for every $i\geq1$,
\begin{equation}\label{reginbogin}
h_{i,z'}(\theta):=\mathrm{e}^{-\langle\theta,z'\rangle}E_{o,z'}\left[\left.f(\theta,i,X,\tilde{X}),i=\inf\{l\in\mathcal{L}:l>0, X_{H_l}\neq\tilde{X}_{\tilde{H}_l}\},i>\nu_1\right| l_1=0\right].
\end{equation}
For every $z'\in\mathbb{V}_d$,
\begin{align}
&\mathrm{e}^{-\langle\theta,z\rangle}E_{o,z}\left[f(\theta,j,X,\tilde{X}),j\in\mathcal{L},j\leq\nu_1,\tilde{X}_{\tilde{H}_j}-X_{H_j}=z', l_1=0\right]\nonumber\\
&\qquad\qquad\leq\mathrm{e}^{-\langle\theta,z\rangle}E_{o,z}\left[f(\theta,j,X,\tilde{X}),\{X_n:0\leq n<H_j\}\cap\{\tilde{X}_m:0\leq m<\tilde{H}_j\}=\emptyset,\right.\nonumber\\
&\hspace{7.7cm}\left.\tilde{X}_{\tilde{H}_j}-X_{H_j}=z',\beta\geq H_j, \tilde{\beta}\geq\tilde{H}_j\right]\nonumber\\
&\qquad\qquad=\mathrm{e}^{-\langle\theta,z\rangle}E_o\!\times\!E_z\left[f(\theta,j,X,\tilde{X}),\{X_n:0\leq n<H_j\}\cap\{\tilde{X}_m:0\leq m<\tilde{H}_j\}=\emptyset,\right.\nonumber\\
&\hspace{8.2cm}\left.\tilde{X}_{\tilde{H}_j}-X_{H_j}=z',\beta\geq H_j, \tilde{\beta}\geq\tilde{H}_j\right]\nonumber\\
&\qquad\qquad\leq\mathrm{e}^{-\langle\theta,z\rangle}E_o\!\times\! E_z\left[f(\theta,j,X,\tilde{X}),\tilde{X}_{\tilde{H}_j}-X_{H_j}=z', \beta\geq H_j, \tilde{\beta}\geq\tilde{H}_j\right]\nonumber\\
&\qquad\qquad=E_o\!\times\! E_o\left[f(\theta,j,X,\tilde{X}),\tilde{X}_{\tilde{H}_j}-X_{H_j}=z'-z, \beta\geq H_j, \tilde{\beta}\geq\tilde{H}_j\right]\nonumber\\
&\qquad\qquad=\frac{P_o\!\times\! P_o(j\in\mathcal{L})}{P_o\!\times\! P_o(l_1=0)}E_o\!\times\! E_o\left[f(\theta,j,X,\tilde{X}),\tilde{X}_{\tilde{H}_j}-X_{H_j}=z'-z, \beta\geq H_j, \tilde{\beta}\geq\tilde{H}_j\right]\nonumber\\
&\qquad\qquad=E_o\!\times\! E_o\left[\left.f(\theta,j,X,\tilde{X}),j\in\mathcal{L},\tilde{X}_{\tilde{H}_j}-X_{H_j}=z'-z\right| l_1=0\right]\nonumber\\
&\qquad\qquad=\hat{P}_o^\theta\!\times\!\hat{P}_o^\theta\left(\exists\,n,m\mbox{ such that }\langle Y_n,e_1\rangle=j\mbox{ and }\tilde{Y}_m-Y_n=z'-z\right)\label{adivar}
\end{align}
where $\left(Y_n\right)_{n\geq0}$ and $\left(\tilde{Y}_m\right)_{m\geq0}$ denote two independent random walks on $\mathbb{Z}^d$, both with transition kernel $q^\theta(y)_{y\in\mathbb{Z}^d}$ given in (\ref{immydit}).

Let $\mu=\mu(\theta):=\hat{E}_o^\theta[\langle Y_1,e_1\rangle]$. For every $j\geq1$, (\ref{adivar}) is equal to
\begin{align}
&\sum_{\langle x,e_1\rangle=j}\hat{P}_o^\theta\left(\exists\,n\mbox{ such that } Y_n=x\right)\hat{P}_o^\theta\left(\exists\,m\mbox{ such that } \tilde{Y}_m=x+z'-z\right)\nonumber\\
&\qquad\leq\sup_{\langle x,e_1\rangle=j}\hat{P}_o^\theta\left(\exists\,n\mbox{ such that } Y_n=x\right)\sum_{\langle x,e_1\rangle=j}\hat{P}_o^\theta\left(\exists\,m\mbox{ such that } \tilde{Y}_m=x+z'-z\right)\nonumber\\
&\qquad=\sup_{\langle x,e_1\rangle=j}\sum_{n\geq1}\hat{P}_o^\theta\left(Y_n=x\right)\hat{P}_o^\theta\left(\exists\,m\mbox{ such that } \langle\tilde{Y}_m,e_1\rangle=j\right)\nonumber\\
&\qquad\leq\sup_{\langle x,e_1\rangle=j}\sum_{n\geq1}\hat{P}_o^\theta\left(Y_n=x\right)\nonumber\\
&\qquad=\sup_{\langle x,e_1\rangle=j}\left(\sum_{|n-j/\mu|\leq\sqrt{j/\mu}}\!\!\!\!\!\hat{P}_o^\theta\left(Y_n=x\right)\ \ +\!\!\!\!\!\!\!\sum_{|n-j/\mu|>\sqrt{j/\mu}}\!\!\!\!\!\hat{P}_o^\theta\left(Y_n=x\right)\right)\nonumber\\
&\qquad\leq S(\theta)j^{-(d-1)/2}.\label{adnan}
\end{align}
Here, (\ref{adnan}) follows from (\ref{onedaylate}) and the local CLT. $S(\theta)$ depends on the mean and covariance of $\left(q^\theta(y)\right)_{y\in\mathbb{Z}^d}$. In particular, $\sup_{\theta\in\mathcal{C}_a(\kappa_3)}S(\theta)<\infty$.

Putting (\ref{ersubigger}), (\ref{adivar}) and (\ref{adnan}) together, we see that
\begin{align*}
\sup_{z\in\mathbb{V}_d'}\sup_{\theta\in\mathcal{C}_a(\kappa)}B_{k,z}(\theta)&\leq\frac{\sup_{\theta\in\mathcal{C}_a(\kappa)}S(\theta)}{\inf_{z\in\mathbb{V}_d'}P_{o,z}(l_1=0)}\sum_{j=0}^{k-1}\max(1,j)^{-(d-1)/2}\sum_{z'\in\mathbb{V}_d}\sup_{\theta\in\mathcal{C}_a(\kappa)}h_{k-j,z'}(\theta)\quad\mbox{and}\\
\sum_{k=1}^\infty\sup_{z\in\mathbb{V}_d'}\sup_{\theta\in\mathcal{C}_a(\kappa)}B_{k,z}(\theta)&\leq\frac{\sup_{\theta\in\mathcal{C}_a(\kappa)}S(\theta)}{\inf_{z\in\mathbb{V}_d'}P_{o,z}(l_1=0)}\left(1+\sum_{j=1}^{\infty}j^{-(d-1)/2}\right)\sum_{i=1}^\infty\sum_{z'\in\mathbb{V}_d}\sup_{\theta\in\mathcal{C}_a(\kappa)}h_{i,z'}(\theta).
\end{align*}
The desired result follows from Lemma \ref{nonintersectlemma} and Lemma \ref{jointreglemma} (stated below.)
\end{proof}

\begin{lemma}\label{jointreglemma}
Recall (\ref{reginbogin}). There exists a $\kappa_5\in(0,\kappa_3)$ such that
$$\sum_{i=1}^\infty\sum_{z\in\mathbb{V}_d}\sup_{\theta\in\mathcal{C}_a(\kappa_5)}h_{i,z}(\theta)<\infty.$$
\end{lemma}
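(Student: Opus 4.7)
The idea is to decompose the event defining $h_{i,z}(\theta)$ by the first site $p$ visited by both walks (so $\langle p,e_1\rangle=\nu_1$), and then to apply the Markov property at the hitting times $s:=\inf\{n:X_n=p\}$ and $t:=\inf\{m:\tilde X_m=p\}$ of $p$. Prior to $s$ and $t$, the ranges of $X$ and $\tilde X$ are disjoint, so the pre-meeting environments explored by the two walks are independent and the averaged law restricted to the pre-meeting event factorizes as $P_o\!\times\!P_z$ by the standard coupling argument (cf.\ (\ref{cokkibar}) and \cite{NoamOfer}, Proposition~3.7). After the meeting, $(X_{s+\cdot}-p,\tilde X_{t+\cdot}-p)$ is distributed, under the averaged law, as a pair of walks in a fresh environment starting at the common origin, and the first joint regeneration level of the original pair strictly above $\nu_1$ at which the two walks have distinct positions coincides with the first joint regeneration level $j\geq 1$ at which the shifted same-start pair separates. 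Decomposing $f(\theta,i,X,\tilde X)$ accordingly into a product of pre-meeting single-walk tilted exponentials and a post-meeting factor $f(\theta,j,X',\tilde X')$ with $j=i-\nu_1$, and using Lemma \ref{nonintersectlemma} to absorb the conditioning on $\{l_1=0\}$, I would obtain
\[
\sum_{i\geq 1}\sum_{z\in\mathbb{V}_d} h_{i,z}(\theta)\;\leq\;c_0^{-1}\,\Psi(\theta)\,\Phi(\theta),
\]
where $\Psi(\theta)$ collects the pre-meeting contribution (summed over $z$ and $p$) and
\[
\Phi(\theta):=\sum_{j\geq 1}E_{o,o}\bigl[f(\theta,j,X,\tilde X),\,j=\inf\{l\in\mathcal L:l>0,\ X_{H_l}\neq\tilde X_{\tilde H_l}\},\,l_1=0\bigr]
\]
is the post-meeting contribution.

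The pre-meeting sum $\Psi(\theta)$ I would handle along the lines of Lemma \ref{kontrolediyoriki}. By Lemma \ref{phillysh_one}, each tilted single-walk ``reach $p$'' expectation becomes a visit probability of the tilted walk $(Y_n)_{n\geq 0}$ with transition kernel $q^\theta$, so $\Psi(\theta)$ reduces to (a bound on) the expected number of simultaneous visits of two independent tilted walks with transverse offset $z\in\mathbb{V}_d$ to a common site at a common level. Using (\ref{onedaylate}) and the local CLT, exactly as in the proof of Lemma \ref{kontrolediyoriki}, the probability that a tilted walk passes through a given site at level $\ell$ is $O(\ell^{-(d-1)/2})$ uniformly on $\mathcal C_a(\kappa_3)$, and summing this over levels yields a convergent series precisely when $(d-1)/2>1$, i.e.\ when $d\geq 4$.

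The post-meeting sum $\Phi(\theta)$ I would control by Cauchy--Schwarz applied to each term:
\[
\Phi_j(\theta)\leq E_{o,o}\bigl[f(\theta,j,X,\tilde X)^2\bigr]^{1/2}\, P_{o,o}\bigl(X_{H_{l_m}}=\tilde X_{\tilde H_{l_m}}\text{ for } m=2,\ldots,j,\,l_1=0\bigr)^{1/2},
\]
where $0=l_1<l_2<l_3<\cdots$ are the joint regeneration levels. The first factor satisfies $E_{o,o}[f^2]\leq E_o[\exp\{4\langle\theta,X_{H_j}\rangle-4\Lambda_a(\theta)H_j\},\beta=\infty]$ by Jensen in the environment, and this grows at most as $C(\theta)^j$ with $C(\theta)\to 1$ as $\theta\to 0$ (via Corollary \ref{mazaltov} and a renewal computation on regeneration times). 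The second factor decays geometrically: by uniform ellipticity, the two same-start walks have a fixed positive probability (depending only on $\delta$) of stepping to different neighbors at each joint regeneration, and the Markov property at successive joint regenerations yields $\rho^{j-1}$ for some $\rho<1$. Choosing $\kappa_5\in(0,\kappa_3)$ small enough so that $C(\theta)\rho<1$ uniformly on $\mathcal C_a(\kappa_5)$ then makes $\sum_j\Phi_j(\theta)$ convergent.

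The main obstacle will be setting up the pre-/post-meeting decomposition rigorously: one must verify that the Markov-property-plus-disjoint-environments coupling correctly factors the event $\{i>\nu_1,\,l_1=0,\,i=\inf\{l\in\mathcal L:l>0,\,X_{H_l}\neq\tilde X_{\tilde H_l}\}\}$ along the first meeting point, and that the various regeneration and non-backtracking conditions split cleanly into pre-meeting and post-meeting parts (in particular that ``no prior intersection'' is compatible with the coupling). Once this bookkeeping is in place, the two summability estimates above---local CLT for the pre-meeting piece (requiring $d\geq 4$) and ellipticity-driven geometric decay for the post-meeting piece---are standard in character.
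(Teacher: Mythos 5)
Your architecture --- factor the event across the first meeting point, control a pre-meeting sum by local-CLT/Green's-function estimates and a post-meeting sum by geometric decay --- is genuinely different from the paper's, and it has a gap I believe is fatal rather than bookkeeping: the sum over $z\in\mathbb{V}_d$ in your pre-meeting quantity $\Psi(\theta)$. After summing over the meeting point $p$, the pre-meeting contribution for a fixed offset $z$ is essentially the probability that the two walks ever meet, $P_{o,z}(\nu_1<\infty)$ (near $\theta=0$ the tilting weights are harmless). That probability is of order $|z|^{-(d-3)}$ (the difference of the two walks at common levels behaves like a transient walk of effective dimension $d-1$ started at $z$), and $\sum_{z\in\mathbb{V}_d}|z|^{-(d-3)}=\infty$ in every dimension since $d-3<d-1$. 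Equivalently, your reduction of $\Psi$ to an expected number of common visits gives $\sum_{z}\sum_{p}\hat{P}_o^\theta(\mbox{hit }p)\,\hat{P}_z^\theta(\mbox{hit }p)=\sum_{\ell\geq0}N_\ell^2$ with $N_\ell=O(1)$ the expected number of sites visited at level $\ell$, which diverges. The bound $\sup_x\hat{P}_o^\theta(\mbox{hit }x)\lesssim\ell^{-(d-1)/2}$ you invoke pays for the sum over levels for a \emph{single} offset $z$ --- exactly how it is used in Lemma \ref{kontrolediyoriki}, whose inner sum $\sum_{z'}h_{i,z'}(\theta)$ is precisely what the present lemma must control --- but it cannot also pay for the sum over $z$.

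The paper's proof works because it never decouples the $z$-dependence from the $i$-dependence. H\"older's inequality in (\ref{karatoprak}) splits $h_{i,z}(\theta)$ into (i) a tilted exponential moment growing at most like $i\,\mathrm{e}^{a_1|\theta|i}$ (Lemma \ref{isimverbir}), (ii) $P_{o,z}(l^*=i\,|\,l_1=0)^{1/4}\leq(A_2\mathrm{e}^{-a_2i})^{1/4}$, the exponential tail of the first separating joint regeneration level (Lemmas \ref{guloruc} and \ref{gulnamaz}), and (iii) $P_{o,z}(i>\nu_1)^{1/4}\leq(A_1i^p|z|^{-p})^{1/4}$ for arbitrary $p$ (Lemma \ref{isimveriki}: meeting before level $i$ forces one walk to travel transverse distance $|z|/2$ within $\tau_i$ steps, then Chebyshev). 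Factor (iii) converts largeness of $|z|$ into largeness of $i$; the exponential decay in $i$ from (ii) then absorbs both the polynomial factor $i^p$ and the growth in (i) once $\kappa_5<a_2/2a_1$, and the $z$-sum converges by taking $p>4d$. Your decomposition discards exactly this interplay: all the $z$-dependence sits in $\Psi$ and all the level-dependence in $\Phi$, so nothing forces $i\gtrsim|z|$. Two secondary issues: the post-meeting pair is not in a fresh environment (after meeting, each walk may revisit sites explored by the other before the meeting, so the coupling (\ref{cokkibar}) does not factor across the meeting point); and your geometric decay is in the \emph{number} of joint regenerations, which still requires the exponential moment of joint regeneration gaps (Lemma \ref{guloruc}) to become decay in the level $i$. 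Your post-meeting analysis is close in spirit to Lemmas \ref{isimverbir} and \ref{gulnamaz}, but the pre-meeting half must be replaced by something playing the role of Lemma \ref{isimveriki}.
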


\begin{remark}
Lemma \ref{jointreglemma} is proved in Subsection \ref{jointregsection}.
\end{remark}

Finally, we are ready to give the

\begin{proof}[Proof of part (b) of Lemma \ref{mevzubu}]
Let $\epsilon:=1-B(0)$. Note that $\epsilon>0$ by (\ref{birdenkucuk}). Lemma \ref{kontrolediyoriki} implies that
$$\sum_{k=N+1}^\infty\sup_{z\in\mathbb{V}_d'}\sup_{\theta\in\mathcal{C}_a(\kappa_5)}\left(B_{k,z}(\theta)-B_{k,z}(0)\right)\leq\sum_{k=N+1}^\infty\sup_{z\in\mathbb{V}_d'}\sup_{\theta\in\mathcal{C}_a(\kappa_5)}B_{k,z}(\theta)<\epsilon/2$$
for some $\kappa_5\in(0,\kappa_3)$ and $N\geq1$. Also, for every $k\geq1$, there exists a $\kappa_4=\kappa_4(k,\epsilon/{2N})\in(0,\kappa_3)$ such that
$$\sup_{z\in\mathbb{V}_d'}\sup_{\theta\in\mathcal{C}_a(\kappa_4)}\left(B_{k,z}(\theta)-B_{k,z}(0)\right)<\epsilon/{2N}$$
by Lemma \ref{kontrolediyorbir}.

Let $\kappa_{eq}:=\min\left(\kappa_5,\kappa_4(1,\epsilon/{2N}),\kappa_4(2,\epsilon/{2N}),\ldots,\kappa_4(N,\epsilon/{2N})\right)$. Recall (\ref{kontrolet}). For every $\theta\in\mathcal{C}_a(\kappa_{eq})$,
\begin{align*}
B(\theta)&\leq B(0)+\sum_{k=1}^{\infty}\sup_{z\in\mathbb{V}_d'}\left(B_{k,z}(\theta)-B_{k,z}(0)\right)\\
&=1-\epsilon+\sum_{k=1}^{N}\sup_{z\in\mathbb{V}_d'}\left(B_{k,z}(\theta)-B_{k,z}(0)\right)\ +\sum_{k=N+1}^{\infty}\sup_{z\in\mathbb{V}_d'}\left(B_{k,z}(\theta)-B_{k,z}(0)\right)\\
&<1-\epsilon+N(\epsilon/{2N})+\epsilon/2=1.\qedhere
\end{align*}
\end{proof}

\subsection{Tail estimates for joint regenerations}\label{jointregsection}

Recall that our proof of Theorem \ref{QequalsA} given in Section \ref{turetmis} relies on Lemma \ref{L2lemma} which, in turn, is proved in Subsection \ref{genclik} assuming Lemma \ref{mevzubu}. In Subsection \ref{noninterSection}, the latter assumption is verified using yet another result, namely Lemma \ref{jointreglemma}. Therefore, in order to complete the proof of Theorem \ref{QequalsA}, we need to prove Lemma \ref{jointreglemma}. 

For every $i\geq1$, $\theta\in\mathcal{C}_a(\kappa_3)$ and $z\in\mathbb{V}_d$, it follows from H\"older's inequality that
\begin{align}
h_{i,z}(\theta)&=\frac{\mathrm{e}^{-\langle\theta,z\rangle}}{P_{o,z}(l_1=0)}E_{o,z}\left[f(\theta,i,X,\tilde{X}),i=\inf\{l\in\mathcal{L}:l>0, X_{H_l}\neq\tilde{X}_{\tilde{H}_l}\},i>\nu_1, \beta=\infty, \tilde\beta=\infty\right]\nonumber\\
&\leq E_{o,z}\left[\exp\left\{4\langle\theta,X_{H_i}\rangle-4\Lambda_a(\theta)H_i\right\},\beta=\infty\right]^{1/4}\mathrm{e}^{-\langle\theta,z\rangle}E_{o,z}\left[\exp\{4\langle\theta,\tilde{X}_{\tilde{H}_i}\rangle-4\Lambda_a(\theta)\tilde{H}_i\},\tilde\beta=\infty\right]^{1/4}\nonumber\\
&\quad\times P_{o,z}(l_1=0)^{-1}P_{o,z}\left(i=\inf\{l\in\mathcal{L}:l>0, X_{H_l}\neq\tilde{X}_{\tilde{H}_l}\}, l_1=0\right)^{1/4}P_{o,z}\left(i>\nu_1\right)^{1/4}\nonumber\\
&= E_o\left[\exp\left\{4\langle\theta,X_{H_i}\rangle-4\Lambda_a(\theta)H_i\right\},\beta=\infty\right]^{1/2}\label{karatoprak}\\
&\quad\times P_{o,z}(l_1=0)^{-1}P_{o,z}\left(i=\inf\{l\in\mathcal{L}:l>0, X_{H_l}\neq\tilde{X}_{\tilde{H}_l}\}, l_1=0\right)^{1/4}P_{o,z}\left(i>\nu_1\right)^{1/4}.\nonumber
\end{align}
The next four lemmas control the terms in (\ref{karatoprak}).

\begin{lemma}\label{isimverbir}
There exists an $a_1<\infty$ such that
$$E_o\left[\left.\exp\left\{4\langle\theta,X_{H_i}\rangle-4\Lambda_a(\theta)H_i\right\}\right|\beta=\infty\right]\leq i\mathrm{e}^{a_1|\theta|i}$$
for every $i\geq1$ and $\theta\in\mathcal{C}_a(\kappa_3/4)$.
\end{lemma}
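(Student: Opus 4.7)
The plan is to exploit the elementary bound $H_i\le\tau_i$, which holds because each regeneration strictly advances the $e_1$-coordinate by at least one, so $\langle X_{\tau_m},e_1\rangle\ge m$ for every $m\ge 1$, and hence the walk must reach level $i$ no later than its $i$-th regeneration. Granted this, the task reduces to controlling $E_o[\exp\{4|\theta|Z_i\}\mid\beta=\infty]$ for a suitable nonnegative dominator $Z_i$ of the exponent, after which the i.i.d.\ structure of the inter-regeneration segments under $P_o(\cdot\mid\beta=\infty)$ turns the estimate into an $i$-th power.

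I would treat the two cases defining $\mathcal{C}_a(\kappa_3/4)$ separately. In the nestling case, the condition $\Lambda_a(\theta)>0$ built into $\mathcal{C}_a$ gives $e^{-4\Lambda_a(\theta)H_i}\le 1$, so the integrand is dominated by $e^{4|\theta||X_{H_i}|}$. A telescoping estimate over regeneration segments then yields
\[
|X_{H_i}|\le M_i:=\sup_{0\le j\le\tau_i}|X_j|\le\sum_{k=1}^i S_k,\qquad S_k:=\sup_{\tau_{k-1}\le j\le\tau_k}|X_j-X_{\tau_{k-1}}|,
\]
where $(S_k)_{k\ge1}$ are i.i.d.\ under $P_o(\cdot\mid\beta=\infty)$ with exponential moments of order $\kappa_3$ by Corollary \ref{mazaltov}(c). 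In the non-nestling case, I would simply use $|X_{H_i}|\le H_i\le\tau_i$ and the uniform bound $|\Lambda_a(\theta)|\le|\theta|$ from (\ref{noldush}) to dominate the exponent by $8|\theta|\tau_i$; here the i.i.d.\ increments $T_k:=\tau_k-\tau_{k-1}$ have exponential moments by Corollary \ref{mazaltov}(b).

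In either regime, the i.i.d.\ factorization produces
\[
E_o\!\left[\exp\{4\langle\theta,X_{H_i}\rangle-4\Lambda_a(\theta)H_i\}\,\Big|\,\beta=\infty\right]\le \mu(\theta)^i,
\]
with $\mu(\theta)=E_o[e^{4|\theta|S_1}\mid\beta=\infty]$ in the nestling case and $\mu(\theta)=E_o[e^{8|\theta|T_1}\mid\beta=\infty]$ in the non-nestling case. Both choices of $\mu$ are finite on $\{|\theta|<\kappa_3/4\}$, smooth there by dominated convergence, and satisfy $\mu(0)=1$. Hence $\log\mu(\theta)=O(|\theta|)$ near the origin, and since $\mu$ is bounded on any closed subset of $\{|\theta|<\kappa_3/4\}$ one concludes $\mu(\theta)\le e^{a_1|\theta|}$ for some finite $a_1$ uniformly in $\theta\in\mathcal{C}_a(\kappa_3/4)$. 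Raising to the $i$-th power and using $1\le i$ then gives the claimed bound $i\,e^{a_1|\theta|i}$.

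The main obstacle is the nestling case: because $\tau_1$ lacks any exponential moments when the walk is nestling, one cannot simultaneously bound the two pieces $e^{4\langle\theta,X_{H_i}\rangle}$ and $e^{-4\Lambda_a(\theta)H_i}$ in terms of $\tau_i$ alone, as the non-nestling argument does. The whole purpose of requiring $\Lambda_a(\theta)>0$ in the definition of $\mathcal{C}_a$ in the nestling case is precisely to allow one to drop the $-4\Lambda_a(\theta)H_i$ term gratis and pivot to the spatial supremum $M_i$, whose exponential moments are furnished by Corollary \ref{mazaltov}(c) in place of the unavailable exponential moments of $\tau_1$.
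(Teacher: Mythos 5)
Your proof is correct and follows the paper's strategy: the non-nestling case is identical (bound the exponent by $8|\theta|\tau_i$ using $|X_{H_i}|\le H_i\le\tau_i$ and $|\Lambda_a(\theta)|\le|\theta|$, then factorize over regenerations), and in the nestling case you make the same key move of discarding $-4\Lambda_a(\theta)H_i$ via $\Lambda_a(\theta)>0$ and pivoting to the spatial supremum controlled by Corollary \ref{mazaltov}(c). Your bookkeeping in the nestling case differs slightly and is arguably cleaner: you dominate $|X_{H_i}|$ by the telescoping sum $\sum_{k=1}^i S_k$ and factorize directly into an $i$-th power, whereas the paper conditions on which regeneration slab contains $H_i$ and on the level of the preceding regeneration point, producing a sum of $i$ terms --- which is where the prefactor $i$ in the statement actually originates; in your version it is pure slack.

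One step deserves tightening: the uniformization $\mu(\theta)\le\mathrm{e}^{a_1|\theta|}$. Boundedness of $\mu$ on closed subsets of the open ball $\{|\theta|<\kappa_3/4\}$ does not by itself yield a single $a_1$ valid on all of $\mathcal{C}_a(\kappa_3/4)$ (a priori $\mu$ could blow up at the boundary), and a $\theta$-independent $a_1$ is genuinely needed downstream, since the proof of Lemma \ref{jointreglemma} chooses $\kappa_5<a_2/2a_1$. The clean fix --- and the paper's actual argument --- is Jensen's inequality $E[\mathrm{e}^{tZ}\mid\beta=\infty]\le E[\mathrm{e}^{t_0Z}\mid\beta=\infty]^{t/t_0}$ for $0\le t\le t_0$, applied with $Z=\tau_1$, $t=8|\theta|$, $t_0=2\kappa_3$ in the non-nestling case and with $Z=\sup_{1\le n\le\tau_1}|X_n|$, $t=4|\theta|$, $t_0=\kappa_3$ in the nestling case; this gives $a_1=\tfrac{4}{\kappa_3}\log E[\mathrm{e}^{2\kappa_3\tau_1}\mid\beta=\infty]$, respectively $a_1=\tfrac{4}{\kappa_3}\log E[\sup_{1\le n\le\tau_1}\mathrm{e}^{\kappa_3|X_n|}\mid\beta=\infty]$, explicitly and uniformly on $\mathcal{C}_a(\kappa_3/4)$. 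With that substitution your argument is complete.
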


\begin{proof}
Recall $\kappa_3$ from Corollary \ref{mazaltov}.\\

\noindent\textit{(a) The non-nestling case:} For every $i\geq1$ and $\theta\in\mathcal{C}_a(\kappa_3/4)$,
\begin{align*}
&E_o\left[\left.\exp\left\{4\langle\theta,X_{H_i}\rangle-4\Lambda_a(\theta)H_i\right\}\right|\beta=\infty\right]\leq E_o\left[\left.\exp\left\{(4|\theta|+4|\Lambda_a(\theta)|)H_i\right\}\right|\beta=\infty\right]\\
&\qquad\leq E_o\left[\left.\exp\left\{8|\theta|\tau_i\right\}\right|\beta=\infty\right]=E_o\left[\left.\exp\left\{8|\theta|\tau_1\right\}\right|\beta=\infty\right]^i\leq E_o\left[\left.\exp\left\{2\kappa_3\tau_1\right\}\right|\beta=\infty\right]^{4|\theta|i/\kappa_3}
\end{align*}
by Jensen's inequality. Since $a_1:=\log E_o\left[\left.\exp\left\{2\kappa_3\tau_1\right\}\right|\beta=\infty\right]^{4/\kappa_3}<\infty$ by Corollary \ref{mazaltov}, we are done.

\vspace{0.4cm}
\noindent\textit{(b) The nestling case:} For every $i\geq1$ and $\theta\in\mathcal{C}_a(\kappa_3/4)$,
\begin{align*}
&E_o\left[\left.\exp\left\{4\langle\theta,X_{H_i}\rangle-4\Lambda_a(\theta)H_i\right\}\right|\beta=\infty\right]\leq E_o\left[\left.\exp\left\{4|\theta|\left|X_{H_i}\right|\right\}\right|\beta=\infty\right]\\
&\qquad=\sum_{j=0}^{i-1}\sum_{k=j}^{i-1}E_o\left[\left.\exp\left\{4|\theta|\left|X_{H_i}\right|\right\},\tau_j<H_i\leq\tau_{j+1},\langle X_{\tau_j},e_1\rangle=k\right|\beta=\infty\right]\\
&\qquad\leq\sum_{j=0}^{i-1}\sum_{k=j}^{i-1}E_o\left[\left.\exp\left\{4|\theta|\left|X_{\tau_j}\right|\right\},\tau_j=H_k\right|\beta=\infty\right]E_o\left[\left.\exp\left\{4|\theta|\left|X_{H_{i-k}}\right|\right\},H_{i-k}\leq\tau_1\right|\beta=\infty\right]\\
&\qquad\leq\sum_{j=0}^{i-1}E_o\left[\left.\exp\left\{4|\theta|\left|X_{\tau_j}\right|\right\}\right|\beta=\infty\right]E_o\left[\left.\sup_{1\leq n\leq\tau_1}\exp\left\{4|\theta|\left|X_n\right|\right\}\right|\beta=\infty\right]\\
&\qquad\leq\sum_{j=0}^{i-1}E_o\left[\left.\sup_{1\leq n\leq\tau_1}\exp\left\{4|\theta|\left|X_n\right|\right\}\right|\beta=\infty\right]^{j+1}\\
&\qquad\leq iE_o\left[\left.\sup_{1\leq n\leq\tau_1}\exp\left\{4|\theta|\left|X_n\right|\right\}\right|\beta=\infty\right]^i\leq iE_o\left[\left.\sup_{1\leq n\leq\tau_1}\exp\left\{\kappa_3\left|X_n\right|\right\}\right|\beta=\infty\right]^{4|\theta|i/\kappa_3}.
\end{align*}
Since $a_1:=\log E_o\left[\left.\sup_{1\leq n\leq\tau_1}\exp\left\{\kappa_3\left|X_n\right|\right\}\right|\beta=\infty\right]^{4/\kappa_3}<\infty$ by Corollary \ref{mazaltov}, we are done.
\end{proof}

\begin{lemma}\label{isimveriki}
For every $p\geq1$, there exists an $A_1=A_1(p)<\infty$ such that $$P_{o,z}\left(i>\nu_1\right)\leq A_1i^p\max(1,|z|)^{-p}$$ for every $i\geq1$ and $z\in\mathbb{V}_d$.
\end{lemma}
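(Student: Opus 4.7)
The plan is to reduce the bound on $P_{o,z}(\nu_1<i)$ to a tail estimate on the $i$-th regeneration time $\tau_i=\tau_i(e_1)$. The key geometric observation is the following: if $\nu_1<i$, then the two paths share a common point $x$ with $\langle x,e_1\rangle\leq i-1$, and the triangle inequality $|x|+|x-z|\geq|z|$ forces at least one of the two walks to exhibit a displacement of size $\geq|z|/2$ while visiting a point whose first coordinate is less than $i$.

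First I would make this precise via a union bound: $\{\nu_1<i\}\subset A_X\cup A_{\tilde X}$, where
$$A_X:=\{\exists\, n:|X_n|\geq|z|/2,\ \langle X_n,e_1\rangle<i\},\qquad A_{\tilde X}:=\{\exists\, m:|\tilde X_m-z|\geq|z|/2,\ \langle\tilde X_m,e_1\rangle<i\}.$$
Since $A_X$ depends only on $X$ and $A_{\tilde X}$ only on $\tilde X$, their $P_{o,z}$-probabilities coincide with the one-walk marginals $P_o(A_X)$ and $P_z(A_{\tilde X})$. Because $\mathbb{P}$ is i.i.d.\ and $\langle z,e_1\rangle=0$, shift-invariance gives $P_z(A_{\tilde X})=P_o(A_X)$, so it suffices to estimate the single-walk probability $P_o(A_X)$.

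For that I would invoke the regeneration structure. Since $\langle\xi_o,e_1\rangle>0$, $(\mathbf{T},e_1)$ holds by Lemma \ref{madabir}(c), so $P_o(\tau_i<\infty)=1$. The defining property of the regeneration times yields $\langle X_n,e_1\rangle\geq\langle X_{\tau_i},e_1\rangle\geq i$ for every $n\geq\tau_i$, so every $n$ with $\langle X_n,e_1\rangle<i$ must satisfy $n<\tau_i$, which combined with the nearest-neighbor constraint gives $|X_n|\leq n<\tau_i$. Hence $A_X\subset\{\tau_i>|z|/2\}$, and Markov's inequality produces $P_o(A_X)\leq(2/|z|)^p\,E_o[\tau_i^p]$.

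It then remains to bound $E_o[\tau_i^p]$ by $O(i^p)$. By Corollary \ref{mazaltov}(a), $\tau_1$ has $P_o$-moments of all orders, and the regeneration decomposition $\tau_i=\tau_1+\sum_{j=2}^i(\tau_j-\tau_{j-1})$, where $(\tau_j-\tau_{j-1})_{j\geq 2}$ are i.i.d.\ under $P_o$ with the distribution of $\tau_1$ conditional on $\{\beta=\infty\}$, combined with Minkowski's inequality, yields $\|\tau_i\|_p\leq\|\tau_1\|_p+(i-1)\|\tau_1\|_{p,\beta=\infty}\leq C_p\,i$. Assembling these estimates produces $P_{o,z}(\nu_1<i)\leq A_1(p)\,i^p/|z|^p$ for $|z|\geq 1$ with $A_1(p)=2^{p+1}C_p^p$, while the case $z=0$ is immediate since the probability is at most one. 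I do not expect any genuine obstacle; the only point demanding some care is the marginalization in the second paragraph, which however follows directly from $P_{o,z}^\omega=P_o^\omega\times P_z^\omega$ and integrating against $\mathbb{P}$.
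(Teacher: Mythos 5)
Your proposal is correct and follows essentially the same route as the paper: reduce $\{\nu_1<i\}$ via the triangle inequality and the nearest-neighbor bound $|X_n|\leq n$ to the tail event $\{\tau_i\geq|z|/2\}$ for one of the two walks, apply Markov's inequality, and bound the $p$-th moment of $\tau_i$ by $O(i^p)$ through the regeneration decomposition. The only cosmetic difference is that you use Minkowski's inequality where the paper uses the power-mean bound $(a_1+\cdots+a_i)^p\leq i^{p-1}(a_1^p+\cdots+a_i^p)$; both yield the same conclusion.
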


\begin{proof}
For every $i\geq1$, $z\in\mathbb{V}_d'$ and $p\geq1$,
\begin{align*}
P_{o,z}\left(i>\nu_1\right)&\leq P_{o,z}\left(\left\{X_n:0\leq n\leq\tau_i\right\}\cap\left\{\tilde{X}_m:0\leq m\leq\tilde{\tau}_i\right\}\neq\emptyset\right)\\
&\leq P_{o,z}\left(\tau_i\geq\frac{|z|}{2}\right) + P_{o,z}\left(\tilde{\tau}_i\geq\frac{|z|}{2}\right)= 2P_o\left(\tau_i\geq\frac{|z|}{2}\right)\\
&\leq 2\left(\frac{|z|}{2}\right)^{-p}E_o\left[\tau_i^p\right] = 2^{p+1}|z|^{-p}E_o\left[\{\tau_1+\cdots+(\tau_i-\tau_{i-1})\}^p\right]\\
&\leq 2^{p+1}|z|^{-p}i^{p-1}E_o\left[\tau_1^p+\cdots+(\tau_i-\tau_{i-1})^p\right]\\
&= 2^{p+1}|z|^{-p}i^{p-1}\left(E_o\left[\tau_1^p\right] + (i-1)E_o\left[\left.\tau_1^p\right|\beta=\infty\right]\right)\\
&\leq 2^{p+1}P_o(\beta=\infty)^{-1}E_o\left[\tau_1^p\right]i^p|z|^{-p}
\end{align*}
by H\"older's inequality.
Since $A_1(p):=2^{p+1}P_o(\beta=\infty)^{-1}E_o\left[\tau_1^p\right]<\infty$ by Corollary \ref{mazaltov}, we are done.
\end{proof}

\begin{lemma}\label{guloruc}
$\sup_{z\in\mathbb{V}_d}E_{o,z}\left[\left.\mathrm{e}^{a_3l^+}\right|l_1=0\right]<\infty$ for some $a_3>0$, where $l^+:=\inf\{l\in\mathcal{L}:l>0\}$. 
\end{lemma}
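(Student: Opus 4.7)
The plan is to represent $l^+$ as the first common element of two renewal-type sequences built from the single-walk regenerations of $X$ and $\tilde X$, and then use the coupling-with-independent-walks identity on the non-intersection event to reduce to a classical renewal estimate with exponential-tail increments.

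First, working under $P_{o,z}(\cdot\mid l_1=0)$ (which forces $\beta=\tilde\beta=\infty$), both walks possess infinite sequences of single-walk regeneration levels, which I denote $M_k:=\langle X_{\tau_k},e_1\rangle$ and $\tilde M_j:=\langle\tilde X_{\tilde\tau_j},e_1\rangle$. Since $l\in\mathcal{L}$ precisely when $l\in\{M_k\}_{k\geq 0}\cap\{\tilde M_j\}_{j\geq 0}$, one has $l^+=\min(\{M_k\}_{k\geq 1}\cap\{\tilde M_j\}_{j\geq 1})$. By the i.i.d.\ structure of single-walk regenerations, the increments $M_{k+1}-M_k$ and $\tilde M_{j+1}-\tilde M_j$ (for $k,j\geq 1$) are aperiodic, positive, and have exponential moments by Corollary \ref{mazaltov}.

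Next, I would follow the coupling strategy used in (\ref{cokkibar}) and (\ref{coupling}): on the non-intersection event $\{\nu_1=\infty\}$ the joint law of $X$ and $\tilde X$ coincides with the product of the marginal laws, so conditionally on non-intersection, $(M_k)$ and $(\tilde M_j)$ become two genuinely independent aperiodic renewal sequences with exponential-moment increments. A classical block-level argument then gives that the first common value has exponential tail: partition $\mathbb{Z}_+$ into blocks of some fixed width $L_0$; by the renewal theorem each block contains a member of $\{M_k\}$ with probability uniformly bounded below, similarly for $\{\tilde M_j\}$, and under independence the two regeneration levels coincide in a given block with probability at least some $p_0>0$, yielding a geometric bound on the number of failed blocks and an exponential-tail estimate $P_{o,z}(l^+>L,\ \nu_1=\infty\mid l_1=0)\leq C\mathrm{e}^{-cL}$.

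To handle the complementary event $\{\nu_1<\infty\}$, the strong Markov property allows me to restart at the first intersection: once the two walks meet at some level, taking one further single-walk regeneration of each walker brings the process into a configuration whose relative displacement lies again in $\mathbb{V}_d$, where the uniform lower bound of Lemma \ref{nonintersectlemma} can be reapplied. Each intersection-restart costs a random increment of level (whose exponential moment is controlled via Corollary \ref{mazaltov}), and the restart succeeds with uniform positive probability in completing the search. Iterating expresses $l^+$ as a geometric sum of such pieces.

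The main obstacle is maintaining uniformity in $z\in\mathbb{V}_d$ through the restart argument, which is delicate because each failed attempt resets the problem with a new relative displacement depending on the walks' history. This is precisely what the Green-function-based estimate of Berger--Zeitouni furnishes (Lemma \ref{nonintersectlemma}), but only when $d\geq 4$; combining it with the independent-renewal exponential-tail estimate gives $E_{o,z}[\mathrm{e}^{a_3 l^+}\mid l_1=0]<\infty$ for some sufficiently small $a_3>0$, uniformly in $z\in\mathbb{V}_d$.
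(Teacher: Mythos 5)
Your identification of $l^+$ with the first common point of the two single-walk regeneration level sequences is correct, and the first half of your argument is salvageable: on $\{\nu_1=\infty\}$ one passes to the product measure via (\ref{cokkibar}), drops the non-intersection indicator for an upper bound, and invokes the classical exponential-tail estimate for the first simultaneous point of two independent aperiodic renewal processes with exponential-moment increments. (Note, though, that your phrasing ``conditionally on non-intersection, $(M_k)$ and $(\tilde M_j)$ become genuinely independent'' is wrong as stated: conditioning the product measure on the joint event $\{\nu_1=\infty\}$ destroys independence; you must use the identity on the restricted expectation and then enlarge the event.)

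The genuine gap is your treatment of $\{\nu_1<\infty\}$. The proposed restart --- ``taking one further single-walk regeneration of each walker brings the process into a configuration whose relative displacement lies again in $\mathbb{V}_d$'' --- does not produce a renewal point for the pair: the two walkers' individual regeneration levels are in general different, so the relative displacement is \emph{not} in $\mathbb{V}_d$; and even at a common level, after the walks have explored (and, post-intersection, shared) part of the environment, the conditional law of the environment ahead is no longer a fresh i.i.d.\ field, so neither the Markov property for the pair nor Lemma \ref{nonintersectlemma} can be ``reapplied'' there. The only configurations from which one can legitimately restart are the joint regeneration levels themselves --- precisely the object whose tail you are bounding --- so the restart is circular. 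The paper sidesteps the dichotomy on $\nu_1$ entirely (intersections are irrelevant to membership in $\mathcal{L}$): it defines attempted joint regeneration levels $\lambda_1:=1$, $\lambda_{j+1}:=\bigl(M((X_n)_{n\geq H_{\lambda_j}})\wedge M((\tilde X_m)_{m\geq\tilde H_{\lambda_j}})\bigr)+1$, so that $l^+=\sup\{\lambda_j:\lambda_j<\infty\}$; each attempt fails with probability at most $1-\inf_{z'\in\mathbb{V}_d}P_{o,z'}(l_1=0)<1$ uniformly in the relative displacement at the attempted level (Lemma \ref{nonintersectlemma}), the overshoot of a failed attempt has uniform exponential moments by Corollary \ref{mazaltov}, and a H\"older interpolation plus a geometric-series induction over $j$ yields the uniform exponential moment. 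Some version of this attempted-level structure is indispensable; your renewal reduction alone cannot close the argument.
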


\begin{proof}
For every nearest-neighbor path $\left(x_i\right)_{i\geq0}$ on $\mathbb{Z}^d$, define
$$\beta'(\left(x_i\right)_{i\geq0}):=\inf\{i\geq1:\langle x_i,e_1\rangle<\langle x_o,e_1\rangle\}\quad\mbox{and}\quad M(\left(x_i\right)_{i\geq0}):=\sup\{\langle x_i,e_1\rangle: 0\leq i<\beta'(\left(x_i\right)_{i\geq0})\}.$$
In particular, $\beta'(X)=\beta$ and $\beta'(\tilde{X})=\tilde\beta$ for $X=\left(X_n\right)_{n\geq0}$ and $\tilde{X}=(\tilde{X}_m)_{m\geq0}$. Define $$\lambda=\lambda(X,\tilde X):=\left(M(X)\wedge M(\tilde X)\right)+1,\quad\lambda_1:=1\quad\mbox{and}\quad\lambda_{j+1}:=\lambda\left((X_n)_{n\geq H_{\lambda_j}},(\tilde{X}_m)_{m\geq\tilde{H}_{\lambda_j}}\right)$$ for every $j\geq1$. It is easy to see that $l^+:=\sup\{\lambda_j:\lambda_j<\infty\}$ when $X_o\in\mathbb{V}_d$ and $\tilde{X}_o\in\mathbb{V}_d$.

For every $z\in\mathbb{V}_d$,
\begin{align*}
E_{o,z}\left[\mathrm{e}^{\kappa_3\lambda},\lambda<\infty\right]&\leq E_{o,z}\left[\mathrm{e}^{\kappa_3(M(X)+1)},\beta<\infty\right] + E_{o,z}\left[\mathrm{e}^{\kappa_3(M(\tilde X)+1)},\tilde\beta<\infty\right]\\
&= 2E_o\left[\mathrm{e}^{\kappa_3(M(X)+1)},\beta<\infty\right]\leq 2E_o\left[\sup_{1\leq n\leq\tau_1}\mathrm{e}^{\kappa_3|X_n|},\beta<\infty\right].
\end{align*}
By H\"older's inequality,
\begin{align*}
E_{o,z}\left[\mathrm{e}^{a\lambda},\lambda<\infty\right]&\leq E_{o,z}\left[\mathrm{e}^{\kappa_3\lambda},\lambda<\infty\right]^{a/\kappa_3}P_{o,z}(\lambda<\infty)^{1-a/\kappa_3}\\
&\leq\left(2E_o\left[\sup_{1\leq n\leq\tau_1}\mathrm{e}^{\kappa_3|X_n|}\right]\right)^{a/\kappa_3}\Big(1-P_{o,z}(l_1=0)\Big)^{1-a/\kappa_3}
\end{align*} for every $a\in(0,\kappa_3)$. Therefore, it follows from Corollary \ref{mazaltov} and Lemma \ref{nonintersectlemma} that
\begin{equation}\label{lambada}
\sup_{z\in\mathbb{V}_d}E_{o,z}\left[\mathrm{e}^{a_3\lambda},\lambda<\infty\right]\leq\left(2E_o\left[\sup_{1\leq n\leq\tau_1}\mathrm{e}^{\kappa_3|X_n|}\right]\right)^{a_3/\kappa_3}\left(1-\inf_{z\in\mathbb{V}_d}P_{o,z}(l_1=0)\right)^{1-a_3/\kappa_3}\!\!\!\!<1
\end{equation} for some $a_3\in(0,\kappa_3)$.

For every $j\geq2$, 
\begin{align*}
E_{o,z}\left[\mathrm{e}^{a_3\lambda_j},\lambda_j<\infty\right]&=\sum_{z'\in\mathbb{V}_d}E_{o,z}\left[\mathrm{e}^{a_3\lambda_j},\tilde{X}_{\tilde{H}_{\lambda_{j-1}}}\!\!\!-X_{H_{\lambda_{j-1}}}=z',\lambda_j<\infty\right]\\
&=\sum_{z'\in\mathbb{V}_d}E_{o,z}\left[\mathrm{e}^{a_3\lambda_{j-1}},\lambda_{j-1}<\infty,\tilde{X}_{\tilde{H}_{\lambda_{j-1}}}\!\!\!-X_{H_{\lambda_{j-1}}}=z'\right]E_{o,z'}\left[\mathrm{e}^{a_3\lambda},\lambda<\infty\right]\\
&\leq E_{o,z}\left[\mathrm{e}^{a_3\lambda_{j-1}},\lambda_{j-1}<\infty\right]\sup_{z'\in\mathbb{V}_d}E_{o,z'}\left[\mathrm{e}^{a_3\lambda},\lambda<\infty\right].
\end{align*}
By induction,
$$\sup_{z\in\mathbb{V}_d}E_{o,z}\left[\mathrm{e}^{a_3\lambda_j},\lambda_j<\infty\right]\leq\mathrm{e}^{a_3}\left(\sup_{z\in\mathbb{V}_d}E_{o,z}\left[\mathrm{e}^{a_3\lambda},\lambda<\infty\right]\right)^{j-1}.$$
Therefore,
\begin{align*}
\sup_{z\in\mathbb{V}_d}E_{o,z}\left[\mathrm{e}^{a_3l^+}\right]&\leq\sum_{j=1}^\infty\sup_{z\in\mathbb{V}_d}E_{o,z}\left[\mathrm{e}^{a_3l^+},l^+=\lambda_j\right]\leq\sum_{j=1}^\infty\sup_{z\in\mathbb{V}_d}E_{o,z}\left[\mathrm{e}^{a_3\lambda_j},\lambda_j<\infty\right]\\
&\leq\sum_{j=1}^\infty\mathrm{e}^{a_3}\left(\sup_{z\in\mathbb{V}_d}E_{o,z}\left[\mathrm{e}^{a_3\lambda},\lambda<\infty\right]\right)^{j-1}=\mathrm{e}^{a_3}\left(1-\sup_{z\in\mathbb{V}_d}E_{o,z}\left[\mathrm{e}^{a_3\lambda},\lambda<\infty\right]\right)^{-1}<\infty
\end{align*}
by (\ref{lambada}). This implies the desired result since $\inf_{z\in\mathbb{V}_d}P_{o,z}(l_1=0)>0$ by Lemma \ref{nonintersectlemma}.
\end{proof}

\begin{lemma}\label{gulnamaz}
There exist $a_2>0$ and $A_2<\infty$ such that $P_{o,z}\left(\left.l^*=i\right|l_1=0\right)\leq A_2\mathrm{e}^{-a_2i}$
for every $i\geq1$ and $z\in\mathbb{V}_d$, where $l^*:=\inf\{l\in\mathcal{L}:l>0, X_{H_l}\neq\tilde{X}_{\tilde{H}_l}\}$. 
\end{lemma}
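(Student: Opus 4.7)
My plan is to iterate over consecutive joint regenerations and establish a finite exponential moment for $l^*$, from which the claimed bound follows by Markov's inequality. Define $L_0 := 0$ and recursively $L_{k+1} := \inf\{l \in \mathcal{L} : l > L_k\}$, together with offsets $Z_k := \tilde X_{\tilde H_{L_k}} - X_{H_{L_k}}$. Then $l^* = L_K$ where $K := \inf\{k \geq 1 : Z_k \neq o\}$. Set $T(z) := E_{o,z}[\mathrm{e}^{a l^*} \mid l_1 = 0]$ for $z \in \mathbb{V}_d$; the goal is to show $\sup_{z \in \mathbb{V}_d} T(z) < \infty$ for some $a > 0$.

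By the strong Markov property at $L_1$ combined with the i.i.d.\ structure of the environment strictly above level $L_1$, the continuation of the two walks after they meet at a common point $y$ at $L_1$ is distributed, after translation by $-y$, as a fresh pair started at $(o, o)$. This yields the renewal identity
\[
T(z) = E_{o,z}\!\left[\mathrm{e}^{a l^+}\one_{\{Z_1 \neq o\}} \,\big|\, l_1 = 0\right] + E_{o,z}\!\left[\mathrm{e}^{a l^+}\one_{\{Z_1 = o\}} \,\big|\, l_1 = 0\right] \cdot T(o).
\]
Setting $z = o$ and solving yields $T(o) = {E_{o,o}[\mathrm{e}^{a l^+}\one_{\{Z_1 \neq o\}} \mid l_1 = 0]}\big/({1 - E_{o,o}[\mathrm{e}^{a l^+}\one_{\{Z_1 = o\}} \mid l_1 = 0]})$, provided the denominator is positive. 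For general $z$, the identity gives $T(z) \leq T(o) \cdot E_{o,z}[\mathrm{e}^{a l^+} \mid l_1 = 0]$, and the latter is uniformly bounded in $z$ by Lemma \ref{guloruc}.

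The crux is to exhibit $a > 0$ for which $E_{o,o}[\mathrm{e}^{a l^+}\one_{\{Z_1 = o\}} \mid l_1 = 0] < 1$. At $a = 0$ this reduces to $P_{o,o}(Z_1 \neq o \mid l_1 = 0) > 0$, which I would establish via the event $A := \{X_1 = e_1,\, \tilde X_1 = e_2,\, \langle X_i, e_1\rangle \geq 1 \ \forall\, i \geq 1,\, \langle \tilde X_j, e_1\rangle \geq 1 \ \forall\, j \geq 1\}$. On $A$, both walks ascend to distinct sites at level $1$ and never backtrack, so $l_1 = 0$, $l^+ = 1$, and $Z_1 = e_2 - e_1 \neq o$. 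Since $\omega_o$ is independent of the environment at sites of level $\geq 1$, uniform ellipticity and translation give
\[
P_{o,o}(A) \geq \delta^2 \cdot P_{o,\, e_2 - e_1}(\beta = \tilde\beta = \infty) \geq \delta^2 \cdot \inf_{z' \in \mathbb{V}_d'} P_{o, z'}(l_1 = 0) > 0,
\]
the last inequality by Lemma \ref{nonintersectlemma}. Continuity of $a \mapsto E_{o,o}[\mathrm{e}^{a l^+}\one_{\{Z_1 = o\}} \mid l_1 = 0]$ at $0$ (via dominated convergence, justified by the $\mathrm{e}^{a_3 l^+}$-moment in Lemma \ref{guloruc}) then keeps the denominator strictly positive for all sufficiently small $a > 0$.

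The main obstacle will be setting up the renewal recursion rigorously under the conditional law $P_{o,z}(\cdot \mid l_1 = 0)$: one must verify that the post-$L_1$ process, on the event $\{Z_1 = o\}$, is in law a fresh pair from $(o, o)$ conditioned on non-backtracking, and that this factorization interacts correctly with the conditioning on $l_1 = 0$. This is a standard consequence of the joint-regeneration framework developed in \cite{TimoFirasCLT}, but requires careful bookkeeping to separate the pre-$L_1$ and post-$L_1$ environments.
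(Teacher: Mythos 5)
Your overall architecture coincides with the paper's: a renewal decomposition at the first joint regeneration level $l^+$, the factorization of $E_{o,z}[\mathrm{e}^{al^*}\mid l_1=0]$ into a term where the walks separate at $l^+$ plus a ``return to $(o,o)$'' term multiplying $E_{o,o}[\mathrm{e}^{al^*}\mid l_1=0]$, the exponential moment of $l^+$ from Lemma \ref{guloruc} to control both the inhomogeneous term and the continuity/interpolation in $a$, and Chebyshev at the end. The only genuine divergence is how you make the return coefficient strictly less than $1$: the paper observes that $\{X_{H_{l^+}}=\tilde X_{\tilde H_{l^+}}\}\subset\{\nu_1^+<\infty\}$, proves $\sup_{z}P_{o,z}(\nu_1^+<\infty\mid l_1=0)<1$ from ellipticity and Lemma \ref{nonintersectlemma}, and interpolates by H\"older between this and the $\mathrm{e}^{a_3l^+}$-moment; you instead aim to show $P_{o,o}(Z_1\neq o\mid l_1=0)>0$ directly by an explicit event and then invoke dominated convergence. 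Your observation that positivity is only needed at $z=o$ (since the recursion always returns to the diagonal) is correct and slightly streamlines the solving step.

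However, the step where you establish $P_{o,o}(Z_1\neq o\mid l_1=0)>0$ fails as written. Your event $A$ requires simultaneously $\tilde X_1=e_2$ and $\langle\tilde X_j,e_1\rangle\geq1$ for all $j\geq1$; since $e_2\in\mathbb{V}_d$ has $\langle e_2,e_1\rangle=0$, the event is empty. Moreover, the subsequent bound invokes $P_{o,\,e_2-e_1}(\beta=\tilde\beta=\infty)$ and compares it to $\inf_{z'\in\mathbb{V}_d'}P_{o,z'}(l_1=0)$, but $e_2-e_1\notin\mathbb{V}_d$ (the two starting points are at different levels), so Lemma \ref{nonintersectlemma} does not apply and the comparison is unjustified. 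The idea is salvageable: take, say, $X_1=e_1$ and $\tilde X_1=e_2$, $\tilde X_2=e_2+e_1$, and then require both walks to remain at level $\geq1$ forever; the first steps cost at least $\delta^3$ and use only the (independent) environment at level-$0$ sites, and the continuation is a pair started at $e_1$ and $e_1+e_2$, whose difference $e_2$ lies in $\mathbb{V}_d'$, so Lemma \ref{nonintersectlemma} and translation invariance give a positive lower bound. On this event $1\in\mathcal{L}$ and $Z_1=e_2\neq o$. With that repair (and the standard truncation or term-by-term summation over $K$ to justify solving the renewal equation for $T(o)$ without assuming $T(o)<\infty$ a priori), your argument goes through.
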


\begin{proof}
Fix $a_3$ as in Lemma \ref{guloruc}. Define $\nu_1^+:=\inf\{m>0:X_i=\tilde{X}_j\mbox{ for some }i\geq0,j\geq0,\mbox{ and }\langle X_i,e_1\rangle=m\}$. For every $z\in\mathbb{V}_d$ and $a\in(0,a_3)$, by H\"older's inequality,
\begin{align*}
E_{o,z}\left[\left.\mathrm{e}^{al^+},\nu_1^+<\infty\right|l_1=0\right]&\leq E_{o,z}\left[\left.\mathrm{e}^{a_3l^+}\right|l_1=0\right]^{a/a_3}P_{o,z}\left(\left.\nu_1^+<\infty\right|l_1=0\right)^{1-a/a_3}\quad\mbox{and}\\
\sup_{z\in\mathbb{V}_d}E_{o,z}\left[\left.\mathrm{e}^{al^+},\nu_1^+<\infty\right|l_1=0\right]&\leq\left(\sup_{z\in\mathbb{V}_d}E_{o,z}\left[\left.\mathrm{e}^{a_3l^+}\right|l_1=0\right]\right)^{a/a_3}\left(\sup_{z\in\mathbb{V}_d}P_{o,z}\left(\left.\nu_1^+<\infty\right|l_1=0\right)\right)^{1-a/a_3}.
\end{align*} It is an easy consequence of (\ref{ellipticity}) and Lemma \ref{nonintersectlemma} that $\sup_{z\in\mathbb{V}_d}P_{o,z}\left(\left.\nu_1^+<\infty\right|l_1=0\right)<1$. Hence,
$$\sup_{z\in\mathbb{V}_d}E_{o,z}\left[\left.\mathrm{e}^{a_2l^+},\nu_1^+<\infty\right|l_1=0\right]<1$$ for some $a_2\in(0,a_3)$.

It follows from the regeneration structure that
\begin{align*}
E_{o,z}\left[\left.\mathrm{e}^{a_2l^*}\right|l_1=0\right]&=E_{o,z}\left[\left.\mathrm{e}^{a_2l^*},l^+<l^*\right|l_1=0\right] + E_{o,z}\left[\left.\mathrm{e}^{a_2l^*},l^+=l^*\right|l_1=0\right]\\
&=E_{o,z}\left[\left.\mathrm{e}^{a_2l^+},X_{H_{l^+}}=\tilde{X}_{\tilde{H}_{l^+}}\right|l_1=0\right]E_{o,o}\left[\left.\mathrm{e}^{a_2l^*}\right|l_1=0\right] + E_{o,z}\left[\left.\mathrm{e}^{a_2l^+},l^+=l^*\right|l_1=0\right]\\
&\leq E_{o,z}\left[\left.\mathrm{e}^{a_2l^+},\nu_1^+<\infty\right|l_1=0\right]E_{o,o}\left[\left.\mathrm{e}^{a_2l^*}\right|l_1=0\right] + E_{o,z}\left[\left.\mathrm{e}^{a_2l^+}\right|l_1=0\right].
\end{align*}
Therefore,
$$\sup_{z\in\mathbb{V}_d}E_{o,z}\!\left[\left.\mathrm{e}^{a_2l^*}\right|l_1=0\right]\leq\sup_{z\in\mathbb{V}_d}E_{o,z}\!\left[\left.\mathrm{e}^{a_2l^+},\nu_1^+<\infty\right|l_1=0\right]\sup_{z\in\mathbb{V}_d}E_{o,z}\!\left[\left.\mathrm{e}^{a_2l^*}\right|l_1=0\right] + \sup_{z\in\mathbb{V}_d}E_{o,z}\!\left[\left.\mathrm{e}^{a_2l^+}\right|l_1=0\right]$$
and
$$A_2:=\sup_{z\in\mathbb{V}_d}E_{o,z}\left[\left.\mathrm{e}^{a_2l^*}\right|l_1=0\right]\leq\left(1-\sup_{z\in\mathbb{V}_d}E_{o,z}\left[\left.\mathrm{e}^{a_2l^+},\nu_1^+<\infty\right|l_1=0\right]\right)^{-1}\sup_{z\in\mathbb{V}_d}E_{o,z}\left[\left.\mathrm{e}^{a_2l^+}\right|l_1=0\right]<\infty.$$
Finally, for every $i\geq1$ and $z\in\mathbb{V}_d$,
$P_{o,z}\left(\left.l^*=i\right|l_1=0\right)\leq E_{o,z}\left[\left.\mathrm{e}^{a_2l^*}\right|l_1=0\right]\mathrm{e}^{-a_2i}\leq A_2\mathrm{e}^{-a_2i}$
by Chebyshev's inequality.
\end{proof}

\begin{proof}[Proof of Lemma \ref{jointreglemma}]
Recall (\ref{karatoprak}). For every $\kappa_5\in(0,\kappa_3/4)$ and $p\geq1$, Lemmas \ref{isimverbir}, \ref{isimveriki} and \ref{gulnamaz} imply that
\begin{align}
\sum_{i=1}^\infty\sum_{z\in\mathbb{V}_d}\sup_{\theta\in\mathcal{C}_a(\kappa_5)}h_{i,z}(\theta)&\leq\sum_{i=1}^\infty\sum_{z\in\mathbb{V}_d}\sup_{\theta\in\mathcal{C}_a(\kappa_5)}E_o\left[\exp\left\{4\langle\theta,X_{H_i}\rangle-4\Lambda_a(\theta)H_i\right\},\beta=\infty\right]^{1/2}P_{o,z}(l_1=0)^{-1}\nonumber\\
&\qquad\qquad\quad\times P_{o,z}\left(i=\inf\{l\in\mathcal{L}:l>0, X_{H_l}\neq\tilde{X}_{\tilde{H}_l}\}, l_1=0\right)^{1/4}P_{o,z}\left(i>\nu_1\right)^{1/4}\nonumber\\
&\leq\sum_{i=1}^\infty\sum_{z\in\mathbb{V}_d}\sup_{\theta\in\mathcal{C}_a(\kappa_5)}\left(i\mathrm{e}^{a_1|\theta|i}\right)^{1/2}P_{o,z}(l_1=0)^{-1}\left(A_2\mathrm{e}^{-a_2i}\right)^{1/4}\left(A_1i^p\max(1,|z|)^{-p}\right)^{1/4}\nonumber\\
&\leq\frac{(A_1A_2)^{1/4}}{\inf_{z\in\mathbb{V}_d}P_{o,z}(l_1=0)}\left(\sum_{i=1}^\infty i^{1/2+p/4}\exp\{(2a_1\kappa_5-a_2)i/4\}\right)\left(1+\sum_{z\in\mathbb{V}_d'}|z|^{-p/4}\right)\label{alplere}
\end{align}
for some $a_1<\infty$, $a_2>0$, $A_1<\infty$ and $A_2<\infty$. Clearly, (\ref{alplere}) is finite when $p>4d$ and $\kappa_5\in(0,a_2/2a_1)$.
\end{proof}

\section*{Acknowledgments}

I sincerely thank O.\ Zeitouni for many valuable discussions and comments.

\bibliographystyle{plain}
\bibliography{references}

\begin{thebibliography}{10}

\bibitem{RWREGW}
Elie Aid\'ekon.
\newblock Large deviations for random walks in random environment on a
  {G}alton-{W}atson tree.
\newblock arXiv:0811.0438, 2008.

\bibitem{NoamOfer}
Noam Berger and Ofer Zeitouni.
\newblock A quenched invariance principle for certain ballistic random walks in
  i.i.d. environments.
\newblock In {\em In and out of equilibrium. 2}, volume~60 of {\em Progr.
  Probab.}, pages 137--160. Birkh\"auser, Basel, 2008.

\bibitem{Breiman}
Leo Breiman.
\newblock {\em Probability}, volume~7 of {\em Classics in Applied Mathematics}.
\newblock Society for Industrial and Applied Mathematics (SIAM), Philadelphia,
  PA, 1992.
\newblock Corrected reprint of the 1968 original.

\bibitem{CGZ}
Francis Comets, Nina Gantert, and Ofer Zeitouni.
\newblock Quenched, annealed and functional large deviations for
  one-dimensional random walk in random environment.
\newblock {\em Probab. Theory Related Fields}, 118(1):65--114, 2000.

\bibitem{CometsShigaYoshida}
Francis Comets, Tokuzo Shiga, and Nobuo Yoshida.
\newblock Directed polymers in a random environment: path localization and
  strong disorder.
\newblock {\em Bernoulli}, 9(4):705--723, 2003.

\bibitem{lambdaGW}
Amir Dembo, Nina Gantert, Yuval Peres, and Ofer Zeitouni.
\newblock Large deviations for random walks on {G}alton-{W}atson trees:
  averaging and uncertainty.
\newblock {\em Probab. Theory Related Fields}, 122(2):241--288, 2002.

\bibitem{RWREH}
Amir Dembo, Nina Gantert, and Ofer Zeitouni.
\newblock Large deviations for random walk in random environment with holding
  times.
\newblock {\em Ann. Probab.}, 32(1B):996--1029, 2004.

\bibitem{DemboZeitouni}
Amir Dembo and Ofer Zeitouni.
\newblock {\em Large deviations techniques and applications}, volume~38 of {\em
  Applications of Mathematics (New York)}.
\newblock Springer-Verlag, New York, second edition, 1998.

\bibitem{Flury}
Markus Flury.
\newblock Coincidence of {L}yapunov exponents for random walks in weak random
  potentials.
\newblock {\em Ann. Probab.}, 36(4):1528--1583, 2008.

\bibitem{GdH}
Andreas Greven and Frank den Hollander.
\newblock Large deviations for a random walk in random environment.
\newblock {\em Ann. Probab.}, 22(3):1381--1428, 1994.

\bibitem{Krantz}
Steven~G. Krantz and Harold~R. Parks.
\newblock {\em The implicit function theorem}.
\newblock Birkh\"auser Boston Inc., Boston, MA, 2002.
\newblock History, theory, and applications.

\bibitem{Jon}
Jonathon Peterson and Ofer Zeitouni.
\newblock On the annealed large deviation rate function for a multi-dimensional
  random walk in random environment.
\newblock To appear in {\textit{ALEA}}, arXiv:0812.3619, 2008.

\bibitem{FirasLDP}
Firas Rassoul-Agha.
\newblock Large deviations for random walks in a mixing random environment and
  other (non-{M}arkov) random walks.
\newblock {\em Comm. Pure Appl. Math.}, 57(9):1178--1196, 2004.

\bibitem{TimoFirasCLT}
Firas Rassoul-Agha and Timo Sepp{\"a}l{\"a}inen.
\newblock Almost sure functional central limit theorem for ballistic random
  walk in random environment.
\newblock {\em Ann. Inst. Henri Poincar\'e Probab. Stat.}, 45(2):373--420,
  2009.

\bibitem{Rockafellar}
R.~Tyrrell Rockafellar.
\newblock {\em Convex analysis}.
\newblock Princeton Landmarks in Mathematics. Princeton University Press,
  Princeton, NJ, 1997.
\newblock Reprint of the 1970 original, Princeton Paperbacks.

\bibitem{jeffrey}
Jeffrey Rosenbluth.
\newblock {\em Quenched large deviations for multidimensional random walk in
  random environment: a variational formula}.
\newblock {PhD} thesis in {M}athematics, New York University, 2006.
\newblock arXiv:0804.1444.

\bibitem{SznitmanSlowdown}
Alain-Sol Sznitman.
\newblock Slowdown estimates and central limit theorem for random walks in
  random environment.
\newblock {\em J. Eur. Math. Soc. (JEMS)}, 2(2):93--143, 2000.

\bibitem{SznitmanT}
Alain-Sol Sznitman.
\newblock On a class of transient random walks in random environment.
\newblock {\em Ann. Probab.}, 29(2):724--765, 2001.

\bibitem{SznitmanZerner}
Alain-Sol Sznitman and Martin Zerner.
\newblock A law of large numbers for random walks in random environment.
\newblock {\em Ann. Probab.}, 27(4):1851--1869, 1999.

\bibitem{Raghu}
S.~R.~S. Varadhan.
\newblock Large deviations for random walks in a random environment.
\newblock {\em Comm. Pure Appl. Math.}, 56(8):1222--1245, 2003.
\newblock Dedicated to the memory of J{\"u}rgen K. Moser.

\bibitem{YilmazAveraged}
Atilla Yilmaz.
\newblock Averaged large deviations for random walk in a random environment.
\newblock To appear in {\textit{Ann. Inst. H. Poincar\'e Probab. Statist.}},
  arXiv:0809.3467, 2008.

\bibitem{YilmazSpaceTime}
Atilla Yilmaz.
\newblock Large deviations for random walk in a space-time product environment.
\newblock {\em Ann. Probab.}, 37(1):189--205, 2009.

\bibitem{YilmazQuenched}
Atilla Yilmaz.
\newblock Quenched large deviations for random walk in a random environment.
\newblock {\em Comm. Pure Appl. Math.}, 62(8):1033--1075, 2009.

\bibitem{Zeitouni}
Ofer Zeitouni.
\newblock Random walks in random environments.
\newblock {\em J. Phys. A}, 39(40):R433--R464, 2006.

\bibitem{Zerner}
Martin P.~W. Zerner.
\newblock Lyapounov exponents and quenched large deviations for
  multidimensional random walk in random environment.
\newblock {\em Ann. Probab.}, 26(4):1446--1476, 1998.

\bibitem{Nikos}
Nikos Zygouras.
\newblock Lyapounov norms for random walks in low disorder and dimension
  greater than three.
\newblock {\em Probab. Theory Related Fields}, 143(3-4):615--642, 2009.

\end{thebibliography}
\end{document}